\theoremstyle{plain}
\newtheorem{theorem}{Theorem}[section]
\newtheorem{prop}[theorem]{Proposition}
\newtheorem{cor}[theorem]{Corollary}
\theoremstyle{definition}
\newtheorem{notation}[theorem]{Notation}
\newtheorem{rmk}[theorem]{Remark}
\newtheorem{set}[theorem]{Setting}
\newtheorem*{ex*}{Example}
\newcommand\sO{{\mathcal O}}
\newcommand\sK{{\mathcal K}}
\newcommand\sC{{\mathcal C}}
\newcommand\sP{{\mathcal P}}
\newcommand\sF{{\mathcal F}}
\newcommand\sS{{\mathcal S}}
\newcommand\sE{{\mathcal E}}
\newcommand\sG{{\mathcal G}}
\newcommand\sU{{\mathcal U}}
\newcommand\sA{{\mathcal A}}
\newcommand\sM{{\mathcal M}}
\newcommand\sB{{\mathcal B}}
\newcommand\sL{{\mathcal L}}
\newcommand\sY{{\mathcal Y}}
\newcommand\rp{{\mathbf{P}}}
\newcommand\rz{{\mathbf{Z}}}
\newcommand\rr{{\mathbf{R}}}
\newcommand\rc{{\mathbf{C}}}
\newcommand\rl{{\mathbf{L}}}
\newcommand\rd{{\mathbf{D}}}
\title[Derived equivalences of canonical covers]
{Derived equivalences of canonical covers of hyperelliptic and Enriques surfaces in positive characteristic}
\author{Katrina Honigs, Luigi Lombardi and Sofia Tirabassi}
\address{Department of Mathematics\\ The University of Utah, 155 S 1400 E, Salt Lake City, UT 84112-0090, USA}
 \email{\url{honigs@math.utah.edu}}
\address{Department of Mathematics\\ Stony Brook University, Stony Brook, NY, 11794-3651, USA}
 \email{\url{luigi.lombardi@stonybrook.edu}}
\address{Department of Mathematics\\ University of Bergen, Allegaten 52, Bergen, Norway}
\email{\url{sofia.tirabassi@uib.no}}
\begin{document}
\maketitle

\begin{abstract}
We prove that any Fourier--Mukai partner of an abelian surface over an algebraically closed field of positive characteristic is isomorphic to a moduli space of Gieseker-stable sheaves. 
We apply this fact to show that the Fourier--Mukai set of canonical covers of 
hyperelliptic and Enriques surfaces over an algebraically closed field of characteristic greater than three is trivial. 
These results extend to positive characteristic earlier results of 
Bridgeland--Maciocia and Sosna.
\end{abstract}

\section{Introduction}
The main motivation of this paper is the recent series of results in the study of equivalences of derived categories of sheaves of 
smooth projective varieties over fields other than the field of complex numbers. 
For instance, the first named author proves that the Zeta function of an abelian
variety, as well as of smooth varieties of dimension at most three, is unaltered under equivalences 
of derived categories \cite{Ho1,Ho3}. Moreover, Ward in his thesis \cite{Wa}
produces examples of genus one curves over $\mathbf{Q}$ admitting an arbitrary number of distinct Fourier--Mukai partners, revealing 
in this way consistent differences with the case of elliptic curves over $\rc$. 
Furthermore, Ward also studies arithmetic aspects of Calabi--Yau threefolds in positive characteristic.
Finally, we mention that Lieblich and Olsson in \cite{LO} extend to positive characteristic seminal works of Mukai and Orlov concerning 
 derived equivalences of $K3$ surfaces. In particular, they prove that any Fourier--Mukai partner of a $K3$ surface $X$ 
 over an algebraically closed field of characteristic $p\neq 2$ is a moduli space of Gieseker-stable sheaves on $X$, and in addition 
 $X$ admits only a finite number of Fourier--Mukai partners.
While Orlov's proof relies on Hodge theory, Lieblich--Olsson's proof relies on deformation theory of perfect complexes 
and on the theory of liftings to 
the Witt ring, that allows them to lift the whole problem in characteristic zero where Orlov's results can be applied. 

Inspired by \cite{LO}, we seek to study the set of Fourier--Mukai partners of surfaces 
defined in positive characteristic, such as its finiteness and its members. 
In this paper we focus on a special class of abelian and $K3$ surfaces that arise as canonical covers of hyperelliptic and Enriques surfaces. 
Our first main result is that derived equivalent canonical covers of hyperelliptic surfaces are isomorphic.
This extends to positive characteristic the work of Sosna \cite[Theorem 1.1]{So}.
\begin{theorem}\label{main1}
 Let $S$ be an hyperelliptic surface over an algebraically closed field of characteristic $p>3$ and let $A$ be its canonical cover.
 Then any smooth projective surface that is derived equivalent to $A$ is isomorphic to either $A$ or its dual $\widehat{A}$. 
\end{theorem}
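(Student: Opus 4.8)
I need to recall what's known: a hyperelliptic surface $S$ in characteristic $p > 3$ has a canonical cover $\pi: A \to S$ which is an abelian surface (étale of degree equal to the order of $K_S$ in $\mathrm{Pic}(S)$), and $S = A/G$ for a finite group $G$ acting freely. The paper's abstract tells me the key tool: any Fourier–Mukai partner of an abelian surface over an algebraically closed field of positive characteristic is isomorphic to a moduli space of Gieseker-stable sheaves. So let me think about how to leverage this.

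Let me think about the structure of hyperelliptic surfaces. Over $\mathbb{C}$, hyperelliptic (= bielliptic) surfaces are quotients $(E \times F)/G$ where $E, F$ are elliptic curves and $G$ acts on $E$ by translations and on $F$ by a group of automorphisms fixing the origin, with the action on $E \times F$ being free. The canonical cover is $E \times F'$ where... actually let me think. The canonical bundle $K_S$ has order $n \in \{2, 3, 4, 6\}$ (Bagnera–de Franchis classification), and the canonical cover $A$ is an abelian surface.
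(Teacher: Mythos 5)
Your proposal stops at the point where the actual work begins: you have correctly recalled the Bagnera--De Franchis picture of a hyperelliptic surface as $(E\times F)/G$ and identified the paper's moduli-space characterization of Fourier--Mukai partners of abelian surfaces as a key tool, but no argument is actually given. In particular, knowing that any partner $B$ of $A$ (or of $\widehat{A}$) is a moduli space of Gieseker-stable sheaves does not by itself show $B\simeq A$ or $B\simeq\widehat{A}$; that conclusion is false for general abelian surfaces, so one must use the special structure of $A$ as (up to duality) an \'etale cyclic cover of $E\times F$ of degree $1$, $2$ or $3$. The missing core of the paper's proof is a lifting argument: one chooses an equivalence $\Psi:\rd(B)\to\rd(C)$, $C\in\{A,\widehat{A}\}$, whose Mukai vector $(r,l,\chi)$ has $r>0$, $l$ ample and pulled back from $E\times F$, and $(r,\chi)=1$; lifts $E$, $F$, the \'etale cover, and the relevant line bundle to (a ramified extension of) the Witt ring; realizes a lift of $B$ as a relative moduli space over $W$; applies Sosna's characteristic-zero theorem to the geometric generic fibers; and descends the resulting isomorphism to the closed fibers via a Matsusaka--Mumford argument. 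None of these steps appears in your text.

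Moreover, the verification that such lifts exist with the required properties is itself delicate and occupies a full section of the paper: when $E$ or $F$ is ordinary one uses canonical lifts (so that ${\rm Pic}$ restricts surjectively and automorphisms lift), while in the supersingular case one needs Deuring's lifting of a curve together with an automorphism of order three, the connectedness of the supersingular isogeny graph to modify the class $l$ into one of the form $l(\varphi,d_1,d_2)$ with $\varphi$ separable, and control of the N\'eron--Severi rank of the generic fiber (the rank-$\{2,4\}$ hypothesis in Sosna's degree-three case). Without these ingredients the statement you set out to prove is not established; as written, the proposal is a plan fragment rather than a proof.
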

We refer to Theorem \ref{MTg} for a slightly stronger result.
The work of Orlov in the study of derived equivalences of abelian varieties shows that any two abelian varieties $A$ and $B$ 
are derived equivalent if and only if there exists a symplectic isomorphism between the products 
$A\times \widehat{A}$ and $B\times \widehat{B}$ (\cite{Or}). In particular, 
derived equivalent abelian varieties are isogenous and admit only a finite number of partners, 
even in positive characteristic (\cite[Corollary 4.1.3]{Ho2}). 
However this concrete picture involving symplectic isomorphisms did not lead us too far towards the solution of 
our problem; in fact this 
was already observed by Sosna who grounds his proofs on Hodge theory and lattice theory. On the other hand our main ingredient is the 
characterization of  Fourier--Mukai partners of abelian surfaces in positive characteristic as moduli spaces of Gieseker-stable sheaves. 
The following result extends to positive characteristic the result \cite[Theorem 5.1]{BM} of Bridgeland--Maciocia. 
In the following we denote by $\rd(X)$ the bounded derived category of coherent sheaves on a smooth projective variety $X$.
\begin{theorem}\label{main2}
Let $A$ be an abelian surface over an algebraically closed field $k$ of positive characteristic and 
let $Y$ be a smooth projective variety over $k$. Suppose furthermore that there is an equivalence of triangulated 
categories $\Phi:\rd(A)\rightarrow \rd(Y)$.
 Then $Y$ is an abelian surface and $A$ is isomorphic 
 to a moduli space of Gieseker-stable sheaves on either $Y$ or its dual $\widehat{Y}$.
\end{theorem}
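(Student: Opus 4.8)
The plan is to identify $Y$ first by means of elementary derived invariants and then to follow the strategy of Bridgeland--Maciocia, supplying the positive-characteristic inputs from Orlov's theory of derived equivalences of abelian varieties, from the lifting techniques of Lieblich--Olsson, and from the theory of Gieseker moduli in positive characteristic. \textbf{Step 1: $Y$ is an abelian surface.} Since the dimension of a smooth projective variety is a derived invariant, $Y$ is a surface. The Serre functor of $\rd(A)$ is the shift $[2]$, because $\omega_A\cong\sO_A$; as an equivalence intertwines Serre functors, the Serre functor of $\rd(Y)$ is also $[2]$, which forces $-\otimes\omega_Y\cong\mathrm{id}$ and hence $\omega_Y\cong\sO_Y$; in particular $Y$ is minimal. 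On the other hand $b_1(Y)=b_1(A)=4$ by the derived invariance of the $\ell$-adic Betti numbers (cf.\ \cite{Ho1}), so the Albanese variety $\alb{Y}$ has dimension $2$ and the Albanese morphism $Y\to\alb{Y}$ is a surjective, generically finite morphism of smooth projective surfaces. Since $\omega_Y$ is trivial this morphism cannot be ramified, so it is étale, and therefore $Y$ is an abelian surface.

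\textbf{Step 2: normalising the Fourier--Mukai kernel.} Let $\sE\in\rd(A\times Y)$ be the kernel of $\Phi$, so that $\Phi(k(a))=\sE_a:=\sE|_{\{a\}\times Y}$ for $a\in A$. Using Orlov's description of the group of exact autoequivalences of $\rd(Y)$ for $Y$ an abelian surface --- generated by shifts, translations, tensoring with line bundles, and the Fourier--Mukai transform $\rd(Y)\xrightarrow{\ \sim\ }\rd(\widehat Y)$ --- I would replace $\Phi$ by its composition with a suitable such autoequivalence so that the new kernel $\sG$, now living on $A\times Y'$ with $Y'$ equal to $Y$ or $\widehat Y$, is a coherent sheaf, flat over $A$, with every fibre $\sG_a$ Gieseker-stable with respect to a fixed ($v$-generic) polarisation $H$ on $Y'$ and with one and the same Mukai vector $v$; here $v$ is automatically primitive, since $\Phi$ induces an isometry of Mukai lattices taking the class of a point to $v$. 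This WIT-type analysis can be carried out directly on the abelian surface, or --- following Lieblich--Olsson --- after lifting the perfect complex $\sE$ to the Witt ring $W(k)$ and arguing on the characteristic-zero fibre, where the corresponding statement is part of the work of Bridgeland--Maciocia \cite{BM}.

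\textbf{Step 3: identifying $A$ with a moduli space.} With $\sG$ as above, the flat family $\{\sG_a\}_{a\in A}$ of Gieseker-stable sheaves on $Y'$ with Mukai vector $v$ defines a morphism $f\colon A\to M_{Y'}(v,H)$ to the corresponding moduli space. The morphism $f$ is injective, because $\Phi$ is fully faithful and hence the sheaves $\sG_a$ are pairwise non-isomorphic; it is also étale, since for each $a$ the Kodaira--Spencer map identifies $T_aA=\mathrm{Ext}^1_A(k(a),k(a))$ with $\mathrm{Ext}^1_{Y'}(\sG_a,\sG_a)=T_{[\sG_a]}M_{Y'}(v,H)$ through the equivalence. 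By the theory of moduli of Gieseker-stable sheaves on abelian surfaces in positive characteristic, $M_{Y'}(v,H)$ is a smooth projective surface each of whose connected components is a fine moduli space; since $A$ is proper of dimension $2$, the map $f$ is an open and closed immersion, hence an isomorphism onto a connected component. This exhibits $A$ as a moduli space of Gieseker-stable sheaves on $Y$ or on $\widehat Y$.

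\textbf{Main obstacle.} The essential difficulty is Step 2: after the standard manipulations one must produce a kernel that is a genuine sheaf, flat over $A$, with Gieseker-stable fibres of primitive Mukai vector. In characteristic zero this is the heart of \cite{BM}; in positive characteristic one must either transport the argument along a lift to $W(k)$, which requires that the deformation theory of the perfect complex kernel be unobstructed in the appropriate sense and that the lifted kernel still define an equivalence of abelian surfaces over a characteristic-zero field, or else redevelop directly in characteristic $p$ the relevant parts of the theory of semihomogeneous sheaves and of Gieseker moduli on abelian surfaces --- in particular the smoothness, projectivity and fineness of the moduli spaces appearing in Step 3.
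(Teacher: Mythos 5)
Your Step 1 reaches the right conclusion, but the argument as written is incomplete in positive characteristic: from $\omega_Y\cong\sO_Y$ and $b_1(Y)=4$ it does not formally follow that the Albanese morphism is surjective and generically finite (a priori the Albanese image could have smaller dimension). The paper handles this by quoting the Bombieri--Mumford/Liedtke classification: a smooth surface in characteristic $p$ with trivial canonical bundle and $b_1=4$ is an abelian surface. This is a repairable citation issue, not the main problem.

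The main problem is Step 2, which you yourself flag as the ``main obstacle'' and do not resolve: producing, after composing with autoequivalences, a kernel that is a sheaf on $A\times Y'$, flat over $A$, with Gieseker-stable fibres of fixed primitive Mukai vector. That is exactly the hard content of the Bridgeland--Maciocia argument, and neither of your two suggested routes is carried out (the proposed lift of the kernel to $W(k)$ is delicate and is not what is needed here). Moreover, your appeal to ``Orlov's description'' of ${\rm Aut}(\rd(Y))$ as generated by shifts, translations, line bundle twists and $\sS_Y$ is inaccurate: those functors do not generate the full autoequivalence group of an abelian surface, though only they are actually needed. The paper's proof sidesteps your Step 2 entirely: one only normalizes the \emph{numerical} data, namely one finds (Proposition \ref{RelPrime3}) an equivalence $\Psi:\rd(A)\to\rd(C)$, $C\in\{Y,\widehat{Y}\}$, with $\Psi^{\rm CH}(0,0,1)=(r,l,\chi)$, $r>0$, $l$ ample and $\gcd(r,\chi)=1$, using just shifts, $\sS_Y$ and twists acting on the numerical Chow ring. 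For such a vector $v$ with $\langle v,v\rangle=0$ the moduli space $\sM_l(v)$ is a smooth projective fine moduli surface (Theorem \ref{modulispace}) and its universal family gives an equivalence $\Phi_{\sU}$ with $\Phi_{\sU}^{\rm CH}(0,0,1)=v$; hence $\Phi_{\sU}^{-1}\circ\Psi$ is a \emph{filtered} equivalence. The final input, special to abelian varieties and valid in characteristic $p$ (Bridgeland's thesis), is that equivalences of abelian varieties send skyscraper sheaves to sheaves up to shift, so a filtered equivalence sends skyscrapers to skyscrapers and is therefore $(-\otimes L)\circ f_*$ for an isomorphism $f$ (Proposition \ref{filteredequivalence}); this yields $A\simeq\sM_l(v)$ directly, with no classifying morphism $f:A\to M$, no flat family of stable sheaves indexed by $A$, and no lift to characteristic zero. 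Your Step 3 is essentially subsumed by this filtered-equivalence argument, but as it stands your proposal has a genuine unfilled gap at its central step.
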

The proof of the previous theorem is centered on the notion of \emph{filtered equivalences}. 
As equivalences of derived categories attached to smooth projective varieties are of Fourier--Mukai type, a derived equivalence $\Phi:\rd(X)\rightarrow \rd(Y)$ of surfaces 
induces a homomorphism $\Phi^{\rm CH}:{\rm CH}^*(X)\rightarrow {\rm CH}^*(Y)$ between the numerical 
Chow rings which in general does not respect the grading. 
Then one says that $\Phi$ is filtered if $\Phi^{\rm CH}(0,0,1)=(0,0,1)$. 
In Proposition \ref{filteredequivalence} we show that in the case of abelian surfaces, 
a filtered equivalence induces an isomorphism between the surfaces themselves. This mainly follows from the fact 
that derived equivalences of abelian 
varieties enjoy the property that skyscraper sheaves are sent to sheaves up to shift. 
Therefore in order to complete the proof of Theorem \ref{main2} we construct an equivalence 
$$\Xi:\rd(A)\stackrel{\Phi}\longrightarrow \rd(Y)\stackrel{\Psi}{\longrightarrow} \rd(Y)$$ 
as the composition of $\Phi$ with an autoequivalence $\Psi$ of $\rd(Y)$
such that $\Xi^{\rm CH}(0,0,1)$ is a vector $v=(r,l,\chi)$ for which 
the corresponding moduli space $\sM_Y(v)$ of Gieseker-stable sheaves is a smooth surface that admits a universal family $\sU$. 
This completes the proof as the composition of $\Xi$ with the Fourier--Mukai functor associated to $\sU$ is a filtered equivalence.
However we have been able to carry out this plan only if the rank component of $\Phi^{\rm CH}(0,0,1)$ is non-zero, while in the remaining case 
we can perform the same strategy at the price of involving the Mukai's equivalence $\sS_Y:\rd(Y)\rightarrow \rd(\widehat{Y})$ 
in the construction of $\Psi$. 
This explains why the conclusions of Theorem \ref{main2} and \cite[Theorem 5.1]{BM} are not completely symmetric. 
Finally, we point out that filtered equivalences have been introduced in \cite{LO}, 
together with other stronger versions in \cite{LO2}, 
to establish a derived version of Torelli theorem for $K3$ surfaces in characteristic $p\neq 2$.

Now we go back to the proof of Theorem \ref{main1}. The starting point is Bagnera--De Franchis' list that 
realizes a hyperelliptic surface $S$ as a quotient of a product of 
two elliptic curves $E\times F$ by a finite group $G$. In particular one deduces that 
the dual of the canonical cover $\widehat{A}$ of $S$ is either a product of two elliptic curves, or 
an \'{e}tale cover of degree two of two elliptic curves, or an \'{e}tale cover of degree three of 
two elliptic curves of which one admits an
automorphism of groups of order three. The strategy now is to lift to the Witt ring both the elliptic curves $E$ and $F$, the \'{e}tale 
cover $\widehat{A}\rightarrow E\times F$, and an arbitrary Fourier--Mukai partner $B$ of $\widehat{A}$ so that, upon restricting to the geometric generic fibers, one can 
involve Sosna's result in characteristic zero in order to get an isomorphism between the closed fibers of the lifts of $B$ and $\widehat{A}$. 
We remark that the main difficulty in doing so is that lifts should be carefully chosen. In fact one needs to involve arguments similar to the ones
of the proof of Theorem \ref{main2} in order to build a lift of $B$ as a relative moduli space, 
and in addition, in the case of covers of degree three, one needs to lift an elliptic curve together with 
an automorphism and a principal polarization. 
This is possible because elliptic curves, both 
ordinary and supersingular, have good lifting properties. 
In particular we use the theory of canonical covers in the case of ordinary elliptic curves,
and a result of Deuring (\emph{cf}. \cite{Oo}) 
that allows us to lift to a ramified extension of the Witt ring a supersingular elliptic curve together with an automorphism.
Furthermore, another peculiar fact shared by supersingular elliptic curves that we need of, is that the  
graph of isogenies of supersingular curves is connected (\cite[Corollary 78]{Kohe}).

In the last section we observe that one can push a little further the techniques of \cite{LO} in order to prove that $K3$ surfaces that are 
canonical covers of Enriques surfaces in characteristic $p>3$ do not admit any non-trivial Fourier--Mukai partner. This in particular extends to 
positive characteristic the second part of the result of Sosna \cite[Theorem 1.1]{So}.
\begin{theorem}\label{main3}
  Let $S$ be an Enriques surface over an algebraically closed field of characteristic $p>3$ and let $X$ be its canonical cover.
  Then any smooth projective surface that is derived equivalent to $X$ is isomorphic to $X$.
\end{theorem}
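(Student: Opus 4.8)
The plan is to push the positive-characteristic deformation arguments of \cite{LO} one step further and to reduce the statement to Sosna's theorem in characteristic zero, in a manner parallel to the proof of Theorem \ref{main1}. Let $\Phi\colon\rd(X)\to\rd(Y)$ be an equivalence. Since $p>3$, the canonical cover $X\to S$ is \'{e}tale of degree two, so $X$ is smooth, $\omega_X\cong\mathcal{O}_X$ and $h^1(\mathcal{O}_X)=0$; thus $X$ is a K3 surface, and hence so is $Y$. By \cite{LO}, $Y$ is isomorphic to a moduli space $\mathcal{M}_X(v)$ of Gieseker-stable sheaves on $X$ for some primitive Mukai vector $v$ with $v^2=0$, and, after composing $\Phi$ with an autoequivalence, we may assume that a fixed polarization of $X$ is $v$-generic, so that $\mathcal{M}_X(v)$ is a smooth K3 surface carrying a (possibly twisted) universal family, as in \cite{LO}.

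The heart of the argument is the construction of a good lift over a discrete valuation ring $R$ finite over the Witt ring $W(k)$. Because $p\neq 2$, the Enriques surface $S$ lifts to a smooth projective Enriques surface $\mathcal{S}$ over $R$; let $\mathcal{X}\to\operatorname{Spec}R$ be its canonical cover. This is a smooth relative K3 surface with closed fibre $X$, and its geometric generic fibre $\mathcal{X}_{\bar\eta}$ is the canonical cover of the Enriques surface $\mathcal{S}_{\bar\eta}$ over an algebraically closed field of characteristic zero. The delicate point --- the one requiring the ``careful choice of lifts'' already stressed in the introduction --- is to arrange simultaneously that the Mukai vector $v$ (in particular its $\mathrm{NS}$-component) extends to a class over the total space $\mathcal{X}$ that stays $v$-generic on both the special and the geometric generic fibre; this may force one to enlarge $R$, to choose the Enriques lift $\mathcal{S}$ so that enough of $\mathrm{NS}(X)$ deforms with it, and to keep track of polarizations by the filtered-equivalence bookkeeping used in the proof of Theorem \ref{main2}. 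Granting this, the relative moduli space $\mathcal{M}:=\mathcal{M}_{\mathcal{X}/R}(v)$ is a smooth proper relative K3 surface over $R$, equipped with a (twisted) universal sheaf on $\mathcal{X}\times_R\mathcal{M}$; its closed fibre is $\mathcal{M}_X(v)\cong Y$, and its geometric generic fibre $\mathcal{M}_{\bar\eta}$ is a Fourier--Mukai partner of $\mathcal{X}_{\bar\eta}$.

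Now Sosna's theorem \cite[Theorem 1.1]{So}, applied over the algebraically closed characteristic-zero field $\overline{\kappa(\eta)}$, shows that the only Fourier--Mukai partner of the canonical cover $\mathcal{X}_{\bar\eta}$ of an Enriques surface is $\mathcal{X}_{\bar\eta}$ itself; hence $\mathcal{M}_{\bar\eta}\cong\mathcal{X}_{\bar\eta}$. It then remains to specialize this isomorphism to the closed fibres. Since $\mathcal{M}$ and $\mathcal{X}$ are regular, every line bundle on the generic fibre is the restriction of a line bundle on the total space; transporting a relative polarization of $\mathcal{X}$ across the generic-fibre isomorphism therefore yields a line bundle on $\mathcal{M}$ that is ample on $\mathcal{M}_{\bar\eta}$ and whose specialization to $Y=\mathcal{M}_0$ is --- possibly after acting by $(-2)$-reflections and enlarging $R$ once more --- an ample class pulled back from an ample class on $X$. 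With compatible relative polarizations in hand, the Matsusaka--Mumford specialization theorem for isomorphisms of polarized varieties (alternatively, the relative version of the derived Torelli theorem of \cite{LO2}) extends the isomorphism over $R$, and restricting to closed fibres gives $Y\cong\mathcal{M}_X(v)\cong X$.

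I expect the main obstacle to be producing the lift of the second paragraph: one needs a single $\mathcal{X}/R$ that is at once the canonical cover of a lifted Enriques surface --- so that Sosna's theorem becomes available on the generic fibre --- and that supports a lift of the Mukai vector $v$ with a compatible polarization --- so that the relative moduli space exists and specializes correctly. Reconciling these two constraints, essentially deforming enough of $\mathrm{NS}(X)$ while preserving the Enriques structure, is exactly where the methods of \cite{LO} must be combined with the polarization arguments of Theorem \ref{main2}; the specialization of the isomorphism of closed fibres should by comparison be a more routine, if still technical, matter.
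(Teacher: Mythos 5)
Your outline stalls exactly at the step you flag as ``granting this'': you never actually produce a lift $\mathcal{X}/R$ that is simultaneously the canonical cover of a lifted Enriques surface and carries a lift of the N\'eron--Severi component of the Mukai vector $v$. This is not a routine compatibility to be arranged by ``enlarging $R$'': if you lift $S$ first and take the canonical cover, the line bundles on $X$ that automatically extend over the family are (essentially) those coming from the Enriques lattice, and there is no reason the class $l$ of $v$ lies there; making $l$ deform would require controlling which part of ${\rm NS}(X)$ stays algebraic along a chosen Enriques lift, which is precisely the hard content and is missing. The paper avoids this dilemma by reversing the order of the two constraints. First, the Shioda-supersingular case is disposed of directly by \cite{LO} (and Shioda- equals Artin-supersingularity in odd characteristic by \cite{Pe2015}), so one may assume $X$ has finite height. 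Then one lifts $X$ itself, not $S$, using the N\'eron--Severi-preserving lift of Nygaard--Ogus \cite{NygO}, so that ${\rm NS}(X)\simeq{\rm NS}(\mathcal{X}_\eta)$ and \emph{every} Mukai vector, in particular the one with $Y\simeq\mathcal{M}_h(v)$ furnished by \cite[Proposition 8.2]{LO}, lifts for free; the relative moduli space over $W$ then exists with closed fibre $Y$ and generic fibre a Fourier--Mukai partner of $\mathcal{X}_\eta$.

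What must then be proved---and what your proposal gets by construction but the paper must recover a posteriori---is that $\mathcal{X}_\eta$ is again the canonical cover of an Enriques surface, so that Sosna's characteristic-zero theorem \cite{So} applies. This is done lattice-theoretically: by Jang \cite{Ja2013} the Enriques lattice $\Gamma(2)=U(2)\oplus E_8(-2)$ embeds primitively into ${\rm NS}(X)$ with no $(-2)$-classes in the orthogonal complement, and transporting this through ${\rm NS}(X)\simeq{\rm NS}(\mathcal{X}_\eta)$ and invoking Namikawa's period characterization \cite{Na1985} shows $\mathcal{X}_\eta$ is an Enriques cover. The final specialization step via \cite[Lemma 6.5]{LO} is the same in both accounts. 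So your overall architecture (lift, apply Sosna generically, specialize) matches the paper's, but the one genuinely nontrivial input---how to reconcile the Enriques structure with the lift of $v$---is asserted rather than proved, and the paper's actual resolution (finite-height reduction, Nygaard--Ogus lift of $X$, Jang plus Namikawa) is absent from your argument.
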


\subsection*{Notation} Unless otherwise specified we work over an algebraically closed field $k$ of positive characteristic $p$. Further notation is introduced in Notation \ref{notat}.

\subsection*{Acknowledgements}
This project started when ST visited LL in the Summer of 2014 at the University of Bonn. LL and ST heartily thank Daniel Huybrechts for 
guiding them during the early stages of the project and for his insights. Moreover LL and ST thank
the Institute of Mathematics of the University of Bonn for the optimal working conditions offered. 
The authors all thank Martin Olsson for useful advices and Pawel Sosna for carefully reading a first draft of this paper. Moreover
KH thanks Aaron Bertram and Eric Canton for helpful conversation;
LL thanks Alberto Bellardini, Christian Liedtke, Christian Schnell and Mattia Talpo for  
comments and correspondence; 
ST thanks A. Bertram, C. Hacon, M. Lieblich, B. Moonen, K. Schwede, M. Talpo and B.
Viray for very interesting mathematical conversation. Finally the authors thank their previous institutions that respectively were:
University of California at Berkeley, University of Bonn, and the University of Utah. 
LL was partly supported by the SFB/TR45 ``Periods, moduli spaces, and arithmetic of algebraic varieties'' of the DFG (German Research Foundation).

%
%

\section{Background material}

\subsection{Fourier--Mukai transforms and Chow rings}
  
 Let $k$ be an algebraically closed field of positive characteristic $p$. 
 The bounded derived category of sheaves of a smooth projective variety $X$ is defined as 
 $\rd(X):=D^b\big(\mathcal{C}oh(X) \big)$. The category $\rd(X)$ is $k$-linear and triangulated. 
 If $Y$ is another smooth projective variety, an object $\sE$ in $\rd(X\times Y)$ 
  defines a \emph{Fourier--Mukai functor} via the assignment:
$$\Phi_{\sE} : \rd(X) \to \rd(Y), \quad \sF\mapsto \rr p_{2*}\big(p_1^*\sF\stackrel{\rl}{\otimes} \sE\big)$$ 
\noindent where $p_1$ and $p_2$ denote the projections from $X\times Y$ onto the first and second factor respectively. 
  An important theorem of Orlov tells us that any equivalence $F:\rd(X)\rightarrow \rd(Y)$ is of Fourier--Mukai type, \emph{i.e.}
  there exists a unique up to isomorphism \emph{kernel} $\sE$ in $\rd(X\times Y)$ such that $F\simeq \Phi_{\sE}$. Finally we recall that the 
  composition of Fourier--Mukai transforms is again of Fourier--Mukai type.

  Consider now an abelian surface $A$ over $k$. We denote by
${\rm CH}^*(A)_{{\rm num}}=\oplus_i {\rm CH}^i(A)_{{\rm num}}$ 
the graded ring of algebraic cycles modulo \emph{numerical} equivalence so that 
$${\rm CH}^0(A)_{\rm num}\simeq \rz, \quad {\rm CH}^1(A)_{\rm num}\simeq {\rm NS}(A)\quad \mbox{and}\quad 
{\rm CH}^2(A)_{\rm num}\simeq \rz,$$ where ${\rm NS}(A)$ denotes the N\'{e}ron--Severi group of $A$ 
up to torsion. Moreover we set ${\rm CH}^*(A)_{{\rm num},\mathbf{Q}}:={\rm CH}^*(A)_{{\rm num}}\otimes \mathbf{Q}$.
For an object $\sF$ in $\rd(A)$ we denote by $v(\sF)\in {\rm CH}^*(A)_{{\rm num},\mathbf{Q}}$ its \emph{Mukai vector} 
(see \cite[\S 5.2]{Huy} and \cite[p. 3]{BBHP}). 
 Hence the Mukai vector of a locally free sheaf $E$ on $A$ is 
 $v(E)=\big({\rm rk}(E), \, c_1(E),\, \chi(E)\big)$ and the map $$v:\rd(A)\rightarrow  {\rm CH}^*(A)_{{\rm num},\mathbf{Q}}$$ factors 
 through the Grothendieck group $K(A)$ of locally free sheaves via the Chern character
$${\rm ch}: K(A) \; \rightarrow {\rm CH}^*(A)_{{\rm num},\mathbf{Q}}.$$
 Finally, we denote the \emph{Mukai pairing} on
${\rm CH}^*(A)_{{\rm num},\mathbf{Q}}$ by
$$\langle (r,l,\chi) \, , \, (r',l',\chi')\rangle_A \; := \; l\cdot l' \,- \, r \,\chi' \, - \, \chi \, r',$$ so that
by the Grothendieck--Riemann--Roch Theorem there are equalities
\begin{equation}\label{isometry1}
 \langle v(\sF), \, v(\sG)\rangle_A \; = \; -\,\chi(\sF,\sG)\quad \mbox{ for any objects } \sF,\sG \mbox{ in } \rd(A) 
\end{equation}
(as usual $\chi(\sF,\sG)=\sum_{i} (-1)^i {\rm dim}\, {\rm Hom}_{\rd(A)}^i(\sF,\sG)$). 

Given another abelian surface $B$, a Fourier--Mukai functor $\Phi_{\sE}:\rd(A)\rightarrow \rd(B)$ induces a homomorphism of rings 
$\Phi_{\sE}^{{\rm CH}}:{\rm CH}^*(A)_{{\rm num},\mathbf{Q}} \rightarrow {\rm CH}^*(B)_{{\rm num},\mathbf{Q}}$ 
through the formula
$$\Phi_{\sE}^{{\rm CH}}(-) \; := \; {\rm pr}_{2*}\big({\rm pr}_1^*(-) \, \cdot \, v(\sE)\big)$$ 
where ${\rm pr}_1$ and ${\rm pr}_2$ denote the projections from the product $A\times B$ onto the first and second factor respectively. 
We point out that in general $\Phi_{\sE}^{\rm CH}$ does not respect the grading.
As showed in \cite{Huy} and \cite[\S 3]{Ho2}, it is possible to prove that 
$\big(\Phi_{\sE}\circ \Phi_{\sE'}\big)^{{\rm CH}} \simeq \Phi_{\sE}^{\rm CH} \circ \Phi_{\sE'}^{\rm CH}$ and that $\Phi_{\sE}^{\rm CH}$ is 
invertible if $\Phi_{\sE}$ is an equivalence. Finally we note that if $\Phi_{\sE}$ is an equivalence, then it follows from \eqref{isometry1}, and by the fact that $v\circ \Phi_{\sE}=\Phi_{\sE}^{\rm CH}\circ v$, 
that for any $\sF$ and $\sG$ in $\rd(A)$ 
\begin{equation}\label{isometry}
\big \langle \Phi_{\sE}^{\rm CH} \big(v(\sF) \big), \, \Phi_{\sE}^{\rm CH}\big(v(\sG) \big) \big \rangle_B \; = \; 
\big\langle v(\sF) \, ,\, v(\sG) \big\rangle_A.  
\end{equation}

\noindent We conclude this subsection by pointing out the following peculiar fact true for abelian surfaces. 
Its proof is identical to the proof of \cite[Corollary 9.43]{Huy} with the opportune modifications, and moreover it holds in any dimension. We 
will tacitly use the following result throughout the rest of the paper.
\begin{prop}\label{chinteger}
If $\Phi:\rd(A)\rightarrow \rd(B)$ is an equivalence of derived categories of abelian surfaces, 
then 
$$\Phi^{\rm CH}\big({\rm CH}^*(A)_{\rm num}\big) \; = \; {\rm CH}^*(B)_{\rm num}.$$ 
\end{prop}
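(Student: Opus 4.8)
The plan is to mimic the proof of \cite[Corollary 9.43]{Huy} in the setting of abelian surfaces. First I would recall that any equivalence $\Phi:\rd(A)\rightarrow\rd(B)$ is of Fourier--Mukai type, say $\Phi\simeq\Phi_\sE$ for a kernel $\sE\in\rd(A\times B)$, and that the induced map $\Phi^{\rm CH}_\sE$ on numerical Chow rings is invertible with inverse again of the form $\Phi^{\rm CH}_{\sF}$ for some kernel $\sF$ (the kernel of a quasi-inverse, suitably twisted). In particular both $\Phi^{\rm CH}$ and its inverse are given by a correspondence, namely by cap product with a Mukai vector $v(\sE)\in{\rm CH}^*(A\times B)_{{\rm num},\mathbf{Q}}$ followed by push-forward; thus it suffices to show that these correspondences carry integral classes to integral classes, i.e.\ that $\Phi^{\rm CH}_\sE({\rm CH}^*(A)_{\rm num})\subseteq {\rm CH}^*(B)_{\rm num}$, since applying the same statement to the inverse gives the reverse inclusion and hence equality.

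For the integrality, the key point is that ${\rm CH}^*(A)_{\rm num}$ is generated, as a group, by Mukai vectors of honest objects of $\rd(A)$: the summand ${\rm CH}^0$ is generated by $v(\sO_A)=(1,0,1)$ minus lower-degree corrections, ${\rm CH}^2$ is generated by the class of a point, which is $v(k(x))=(0,0,1)$, and ${\rm CH}^1={\rm NS}(A)$ is generated by first Chern classes $c_1(L)$ of line bundles $L$, which appear in $v(L)=(1,c_1(L),\chi(L))$; combining these three types of vectors and using that the grading pieces are spanned by such classes, one sees that $v(\rd(A))$ spans ${\rm CH}^*(A)_{\rm num}$ over $\mathbf{Z}$. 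Since $v\circ\Phi_\sE=\Phi^{\rm CH}_\sE\circ v$, the image $\Phi^{\rm CH}_\sE\big(v(\rd(A))\big)=v\big(\Phi_\sE(\rd(A))\big)$ consists of Mukai vectors of objects of $\rd(B)$, which lie in ${\rm CH}^*(B)_{\rm num}$ because Chern characters of coherent sheaves on a surface are integral (the denominators in the Chern character of a rank-$r$ sheaf on a surface only involve $2$, and are killed by the standard Todd/Riemann--Roch bookkeeping that defines $v$; concretely $v(E)=({\rm rk}\,E,c_1(E),\chi(E))$ is manifestly integral). Hence $\Phi^{\rm CH}_\sE$ maps a generating set of ${\rm CH}^*(A)_{\rm num}$ into ${\rm CH}^*(B)_{\rm num}$, so $\Phi^{\rm CH}_\sE({\rm CH}^*(A)_{\rm num})\subseteq{\rm CH}^*(B)_{\rm num}$.

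Finally, applying the same argument to a quasi-inverse $\Phi^{-1}\simeq\Phi_\sF:\rd(B)\rightarrow\rd(A)$ yields $\Phi^{\rm CH}_\sF({\rm CH}^*(B)_{\rm num})\subseteq{\rm CH}^*(A)_{\rm num}$, and since $\Phi^{\rm CH}_\sF=(\Phi^{\rm CH}_\sE)^{-1}$, this gives the opposite inclusion ${\rm CH}^*(B)_{\rm num}\subseteq\Phi^{\rm CH}_\sE({\rm CH}^*(A)_{\rm num})$. Together these prove the claimed equality. I expect the only genuinely delicate point to be verifying that $v(\rd(A))$ generates ${\rm CH}^*(A)_{\rm num}$ as a full-rank integral sublattice rather than merely as a $\mathbf{Q}$-vector space after tensoring; this is where one uses that the three graded pieces are $\mathbf{Z}$, ${\rm NS}(A)$, $\mathbf{Z}$ together with the explicit shape $v(E)=({\rm rk}\,E,c_1(E),\chi(E))$, so that structure sheaves, line bundles, and skyscraper sheaves already suffice. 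The rest is the formal bookkeeping of \cite[Corollary 9.43]{Huy}, which goes through verbatim since it never uses the characteristic of the base field.
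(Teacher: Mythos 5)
Your argument is correct, but it runs along a different track than the one the paper intends. The paper proves this proposition by citing the argument of \cite[Corollary 9.43]{Huy}: there the point is that the Todd class of an abelian variety is trivial, so the Mukai vector of the kernel $\sE\in\rd(A\times B)$ is just ${\rm ch}(\sE)$, and the Chern character of any object on an abelian variety is an \emph{integral} class; hence the correspondence ${\rm pr}_{2*}\big({\rm pr}_1^*(-)\cdot v(\sE)\big)$ itself carries integral classes to integral classes, and the same applied to the inverse kernel gives equality. That argument is insensitive to the dimension, which is why the paper can remark that the statement holds for abelian varieties of any dimension. You instead never touch the kernel: you observe that ${\rm CH}^*(A)_{\rm num}$ is spanned over $\mathbf{Z}$ by Mukai vectors of honest objects (skyscrapers give $(0,0,1)$, $\sO_A$ gives the degree-zero generator, and line bundles give all of ${\rm NS}(A)$ after subtracting the other two), that $v\circ\Phi_\sE=\Phi_\sE^{\rm CH}\circ v$, and that Mukai vectors of objects of $\rd(B)$ are manifestly integral since $v(E)=({\rm rk}\,E,c_1(E),\chi(E))$; the reverse inclusion comes from the quasi-inverse. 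This is a perfectly valid and arguably more elementary proof in the surface case, but it leans on the fact that the integral lattice of a surface is generated by such simple vectors and that the degree-two component of $v$ is literally $\chi$, so it does not extend as readily to higher-dimensional abelian varieties the way the kernel-integrality argument does. One small slip: $v(\sO_A)=(1,0,0)$, not $(1,0,1)$, since $\chi(\sO_A)=0$ on an abelian surface (you have written the $K3$ value); this is harmless because your correction by skyscraper classes repairs the generator in any case.
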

 
\begin{notation}\label{notat}
Given an abelian surface $A$ over $k$ 
we denote by ${\rm CH}^*(A)=\oplus_i {\rm CH}^i(A)$ the graded ring ${\rm CH}^*(A)_{{\rm num},\mathbf{Q}}$. 
\end{notation}

\subsection{Some examples of (auto)equivalences} \label{autoeq}
We denote by $A$ an abelian surface and by $\widehat{A}$ its dual variety.
Moreover let $\sP$ be the normalized Poincar\'{e} line bundle on $A\times \widehat{A}$ so that 
$\sS_A:=\Phi_{\sP}:\rd(A)\rightarrow \rd(\widehat{A})$ is an equivalence of triangulated categories \cite{Muk1}. 
The action of $\sS_A^{\rm CH}$
swaps the first and third entry of a vector, \emph{e.g.}:
$$\sS_{A}^{\rm CH}\big({\rm CH}^0(A)\big)={\rm CH}^2(\widehat{A}),\quad  \sS_{A}^{\rm CH}\big({\rm CH}^2(A)\big)={\rm CH}^0(\widehat{A}), 
\quad \mbox{and}\quad \sS_{A}^{\rm CH}\big({\rm CH}^1(A)\big)={\rm CH}^1(\widehat{A}).$$
Let now $H$ be a line bundle on $A$ and $h$ be its class in ${\rm CH}^1(A)$. The autoequivalence 
$T_A(H^{\otimes n}):\rd(A)\rightarrow \rd(A)$ ($n\in \rz$) defined by 
$\sF\mapsto \sF\otimes H^{\otimes n}$ acts on the numerical 
Chow rings as: 
\begin{equation}\label{actionH}
 T_A(H^{\otimes n})^{{\rm CH}}(r,l,\chi) \, = \, \Big(r, \, l\, +\, r\,n\,h,
 \, \chi\, +\, n\,l\cdot h\,+ \,r\,n^2 \, \frac{h^2}{2}\Big).
 \end{equation}
 Finally, the shift functor $[1]:\rd(A)\rightarrow \rd(A)$ acts on ${\rm CH}^*(A)$ by $-1$.

\subsection{Isogenies and exponents}
If $A$ is an abelian variety over $k$,
we denote by $n_A:A\rightarrow A$ the multiplication-by-$n$-map on $A$ and by
$A[n]$ the kernel of $n_A$.
We say that an elliptic curve $E$ over $k$ is \emph{ordinary} if 
$E[p](k)=\rz / p\rz$  and \emph{supersingular}
if $E[p](k)=0$ (\emph{cf}. \cite{Oo2}). Therefore $E$ is supersingular 
if and only if $p_E$ is inseparable and the $j$-invariant
is defined over $\mathbf{F}_{p^2}$, the finite field with $p^2$ elements (\emph{cf}. \cite[Theorem V.3.1]{Si}).
The \emph{exponent} ${\rm exp} \, \varphi$ of a separable 
isogeny $\varphi:A\rightarrow B$ of abelian varieties is the smallest positive integer
that annihilates its kernel. 
Finally we recall that if $\varphi:A\rightarrow B$ is a separable isogeny of exponent $e$, 
then there exists a separable isogeny $\psi:B\rightarrow A$ of exponent
$e$ such that $\psi\circ \varphi=e_A$ and $\varphi\circ \psi=e_B$ (\emph{cf}. \cite[Proposition 1.2.6]{BL}).

\begin{prop}\label{Neron}
 Let $\nu:A\rightarrow B$ be a separable isogeny of exponent $e$ and denote by $\nu^*:{\rm CH}^1(B)\rightarrow {\rm CH}^1(A)$ 
 the pull-back homomorphism. Then there is an inclusion of groups
 $e^2{\rm CH}^1(A)\subset {\rm Im}(\nu^*)$.
\end{prop}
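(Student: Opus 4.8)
The plan is to exploit the companion isogeny $\psi\colon B\to A$ of exponent $e$ supplied by \cite[Proposition 1.2.6]{BL}, which satisfies $\psi\circ\nu = e_A$ and $\nu\circ\psi = e_B$. Pulling back along this composition gives $\nu^*\circ\psi^* = e_A^* \colon {\rm CH}^1(A)\to{\rm CH}^1(A)$. The key point is to understand the action of the multiplication map $e_A$ on ${\rm CH}^1(A)\cong{\rm NS}(A)$ (modulo torsion). Writing $\ell\in{\rm CH}^1(A)$ as a sum $\ell = \ell^+ + \ell^-$ of its symmetric and anti-symmetric parts (where $(-1_A)^*\ell^\pm = \pm\ell^\pm$), one has $e_A^*\ell^+ = e^2\ell^+$ and $e_A^*\ell^- = e\,\ell^-$, by the standard theory of line bundles on abelian varieties (the quadratic, resp.\ linear, behavior of the Néron–Severi class under multiplication by $n$). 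Hence $e_A^*$ agrees with multiplication by $e^2$ on the symmetric part and with multiplication by $e$ on the anti-symmetric part; in either case $e^2\ell$ lies in the image of $e_A^* = \nu^*\circ\psi^*$, since $e^2\ell^- = e\cdot(e\ell^-)$ and $e\ell^-$ is again anti-symmetric, so $e_A^*(e\ell^-) = e\cdot e\ell^- = e^2\ell^-$, while $e_A^*(\ell^+) = e^2\ell^+$.

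Concretely, for a class $\ell\in{\rm CH}^1(A)$ decomposed as above, set $m := \psi^*(\ell^+) + \psi^*(e\ell^-) \in {\rm CH}^1(B)$; then
\[
\nu^*(m) \;=\; \nu^*\psi^*(\ell^+) + \nu^*\psi^*(e\ell^-) \;=\; e_A^*(\ell^+) + e_A^*(e\ell^-) \;=\; e^2\ell^+ + e^2\ell^- \;=\; e^2\ell.
\]
This shows $e^2\ell\in{\rm Im}(\nu^*)$ for every $\ell$, i.e.\ $e^2\,{\rm CH}^1(A)\subset{\rm Im}(\nu^*)$, which is the assertion.

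I expect the main subtlety to be the bookkeeping around torsion and the symmetric/anti-symmetric decomposition. Since we are working with ${\rm CH}^1(A)\cong{\rm NS}(A)$ modulo torsion, the decomposition $\ell = \ell^+ + \ell^-$ need only hold after clearing a denominator of $2$, but this is absorbed harmlessly because we are ultimately landing inside ${\rm Im}(\nu^*)$ after multiplying by $e^2$; alternatively one checks directly that $e_A^*\ell \equiv e^2\ell$ modulo the $e$-torsion-type discrepancy coming from the anti-symmetric part, which is killed once we pass through $e\ell^-$ as above. The separability of $\nu$ (hence of $\psi$) is what guarantees that $\psi^*$ is defined at the level of cycles and that the exponent statement from \cite{BL} applies; no characteristic-$p$ pathology enters beyond that.
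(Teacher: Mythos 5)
Your argument is correct and is essentially the paper's proof: both pass to the companion isogeny with $\mu\circ\nu=e_A$ (so that ${\rm Im}(e_A^*)\subset{\rm Im}(\nu^*)$) and then use that $e_A^*$ acts on ${\rm CH}^1(A)$ as multiplication by $e^2$, which the paper simply quotes from \cite[Corollary 7.25]{VdGM}. Your detour through the symmetric/anti-symmetric decomposition (and the ensuing worry about the denominator $2$) is unnecessary here: since $(-1_A)^*L\otimes L^{-1}\in{\rm Pic}^0(A)$ for every line bundle $L$, the involution $(-1_A)^*$ acts trivially on ${\rm CH}^1(A)\simeq{\rm NS}(A)$ modulo torsion, so every class is symmetric and $e_A^*\ell=e^2\ell$ holds outright.
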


\begin{proof}
  Let $\mu:B\rightarrow A$ be the isogeny such that $\mu \circ \nu = e_A$ and note that
  ${\rm Im}\big(e_A^*:{\rm CH}^1(A)\rightarrow {\rm CH}^1(A) \big)\subset {\rm Im}(\nu^*)$.
  We conclude by using \cite[Corollary 7.25]{VdGM} which shows that $e_A^*$ is the multiplication-by-$e^2$-map.
  \end{proof}

\begin{prop}\label{koh}
 If $E$ and $F$ are supersingular elliptic curves and $l\neq p={\rm char}(k)$ is a prime,
 then there exist an integer $r\gg 0$ and a separable isogeny $\xi:F\rightarrow E$ of degree $l^r$.
\end{prop}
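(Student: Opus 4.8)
The plan is to exploit the connectedness of the graph of supersingular elliptic curves over $k$, as recorded in \cite[Corollary 78]{Kohe}. First I would recall that any two supersingular elliptic curves over an algebraically closed field of characteristic $p$ are isogenous; this is classical (they all have the same formal group, or alternatively one can cite the connectedness of the isogeny graph directly). So there is an isogeny $\eta\colon F\to E$. The issue is twofold: we need the isogeny to be \emph{separable}, and we need its degree to be a power of the prescribed prime $l\neq p$.

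Next I would reduce to controlling the degree. Given any isogeny $\eta\colon F\to E$, write $\deg\eta = p^a m$ with $\gcd(m,p)=1$. The $p$-part can be stripped off: the inseparable part of $\eta$ factors through a power of the relative Frobenius $F_F\colon F\to F^{(p)}$, and since $F$ is supersingular, $F^{(p^2)}\cong F$ (the $j$-invariant lies in $\mathbf{F}_{p^2}$), so after composing with a further isogeny one may assume $\eta$ is separable, at the cost of changing $E$ only up to the already-given isogeny class. More carefully: it suffices to produce \emph{some} separable isogeny $F\to E$, and then fix its degree. The standard dual-isogeny trick (the paragraph preceding Proposition \ref{Neron}, or \cite[Proposition 1.2.6]{BL}) lets us compose a separable isogeny with its quasi-inverse to multiply degrees, but that only gives degrees that are squares of existing separable-isogeny degrees. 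The cleaner route is: the graph in \cite[Corollary 78]{Kohe} is precisely the $l$-isogeny graph (edges are separable isogenies of degree $l$) on the set of supersingular $j$-invariants, and it is connected. Hence there is a path of length $r$ from $F$ to $E$, i.e.\ a chain $F=F_0\to F_1\to\cdots\to F_r=E$ of separable degree-$l$ isogenies; composing them yields a separable isogeny $\xi\colon F\to E$ of degree $l^r$, and $r\gg 0$ since we may always prepend loops or detours (the graph is connected and every vertex has positive degree, in fact is $(l+1)$-regular, so arbitrarily long walks between any two vertices exist).

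Concretely, the key steps in order: (i) invoke \cite[Corollary 78]{Kohe} to get connectedness of the $l$-isogeny graph on supersingular curves, noting each edge corresponds to a separable isogeny of degree $l$ (separability holds because $l\neq p$, so an $l$-isogeny has étale kernel); (ii) extract a walk from $F$ to $E$ of some length $r$, and observe that by composing with back-and-forth steps one can take $r$ as large as desired; (iii) compose the $l$-isogenies along the walk to obtain $\xi\colon F\to E$, which is separable as a composition of separable isogenies and has degree $l^r$ by multiplicativity of degree.

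The main obstacle is purely bookkeeping about what \cite[Corollary 78]{Kohe} literally asserts: one must make sure the cited connectedness statement is indeed about the $l$-isogeny graph (for the given $l$) rather than, say, the full isogeny graph, and that edges are genuinely separable isogenies of degree exactly $l$ rather than isogenies of degree dividing some power of $l$. If the cited result only gives connectedness of the full isogeny graph, then one additionally needs the elementary fact that any isogeny of degree prime to $p$ factors as a composition of isogenies of prime degree, and that separability is preserved; this is standard but should be spelled out. The requirement $r\gg 0$ is harmless and in fact automatic from regularity of the graph.
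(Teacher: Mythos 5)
Your proposal is correct and follows essentially the same route as the paper: invoke \cite[Corollary 78]{Kohe} for the supersingular $l$-isogeny graph, obtain an isogeny of degree $l^r$, and conclude separability from $l\neq p$ (the paper cites \cite[Corollary 2.12]{Si} for this last point). The only detail the paper makes explicit that you gloss over is the base field: since $k$ need not be $\overline{\mathbf{F}}_p$, one first uses that the $j$-invariants lie in $\mathbf{F}_{p^2}$ to apply Kohel's result to models over $\mathbf{F}_{p^2}$ and then obtains $\xi$ by extension of scalars to $k$.
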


\begin{proof}
 Since $E$ and $F$ are supersingular, their $j$-invariants are defined over $\mathbf{F}_{p^2}$. 
 Moreover by \cite[Corollary 78]{Kohe} there exists an isogeny $\xi':F(\mathbf{F}_{p^2})\rightarrow E(\mathbf{F}_{p^2})$ of degree $l^r$ for some 
 positive integer $r\gg 0$. 
 Therefore we obtain our desired isogeny from $\xi'$ by extension of scalars. 
 Finally we observe that $\xi$ is separable 
 as the degree of every non-separable isogeny is divisible by ${\rm char}(k)$ (\cite[Corollary 2.12]{Si}).
 \end{proof}

\subsection{Line bundles on a product of two elliptic curves}\label{mumlin}
Let $(E,O_E)$ be an elliptic curve over $k$. We denote the \emph{Mumford bundle} on $E\times E$ by 
$$\sM_E\; \:= \; \sO_{E\times E}(\Delta_E)\, \otimes \, {\rm pr}_1^*\sO_E(-O_E)\, \otimes \, {\rm pr}_2^*\sO_E(-O_E)$$
where $\Delta_E \subset E\times E$ is the diagonal divisor and ${\rm pr}_1$, ${\rm pr}_2$ are the projections of $E\times E$ onto 
the first and second factor respectively. Given another elliptic curve $(F,O_F)$, line bundles $L_E$ and $L_F$ on $E$ and $F$ respectively,
and a morphism $\varphi:F\rightarrow E$, we define a line bundle on the product $E\times F$
\begin{equation}\label{Picard}
  L(\varphi,L_E,L_F) \, := \, (1_E\times \varphi)^*\sM_E \otimes {\rm pr}_E^*L_E\otimes {\rm pr}_F^*L_F
 \end{equation}
 where ${\rm pr}_E$ and ${\rm pr}_F$ are the projections onto $E$ and $F$ respectively.

\begin{prop}\label{Picardprod2}
  If $\varphi:F\rightarrow E$ and $\psi:F\rightarrow E$ are isogenies, 
 then $$\big(1_E\times (\varphi+\psi)\big)^*\sM_E\, \simeq \, (1_E\times \varphi)^*\sM_E\otimes (1_E\times \psi)^*\sM_E.$$
Therefore for any choice of line bundles $M_E$ and $N_E$ on $E$, and line bundles $M_F$ and $N_F$ on $F$, there are isomorphisms
  $$L(\varphi+\psi,M_E\otimes N_E,M_F\otimes N_F)\, \simeq \, L(\varphi,M_E,M_F) \otimes L(\psi,N_E,N_F).$$
  
  \end{prop}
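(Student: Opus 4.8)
The plan is to reduce the statement to a computation in the group of line bundles on products of abelian varieties, where the key input is the theorem of the cube. First I would recall that $(1_E\times\varphi)^*\sM_E$, $(1_E\times\psi)^*\sM_E$ and $(1_E\times(\varphi+\psi))^*\sM_E$ are all line bundles on $E\times F$, and that proving the asserted isomorphism amounts to showing that their classes in $\mathrm{Pic}(E\times F)$ satisfy the additive relation. Since $\mathrm{Pic}$ of an abelian variety (or a product of such) has no interesting subtleties here beyond the theorem of the cube, it suffices to observe how $\sM_E$ behaves under pullback by maps that are \emph{themselves additive in a parameter}.

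The crucial observation is that $\sM_E$ is, up to translation-invariant corrections, the Mumford bundle associated to a symmetric polarization; concretely, if $m,p_1,p_2:E\times E\to E$ denote the addition map and the two projections, then by definition of $\sM_E$ one has $\sM_E \simeq m^*\sO_E(O_E)\otimes p_1^*\sO_E(-O_E)\otimes p_2^*\sO_E(-O_E)$ (the standard identity $\sO_{E\times E}(\Delta_E)\simeq m^*\sO_E(O_E)\otimes p_1^*\sO_E(-O_E)\otimes p_2^*\sO_E(-O_E)\otimes(\text{sym.})$ on an elliptic curve, which I would cite from \cite{Muk1} or \cite{BL}). Now pulling back along $1_E\times\varphi$ replaces the second copy of $E$ by $F$ and the map $m$ by $(x,y)\mapsto x+\varphi(y)$; doing the same with $\psi$, and with $\varphi+\psi$, the three bundles in question are
$$
(x,y)\mapsto \sO_E(O_E)\ \text{pulled back along}\ x+\varphi(y),\ x+\psi(y),\ x+\varphi(y)+\psi(y),
$$
each tensored with fixed line bundles pulled back from $E$ and from $F$ separately. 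Applying the theorem of the cube to the three morphisms $E\times F\to E$ given by $f_1=\mathrm{pr}_E+\varphi\circ\mathrm{pr}_F$, $f_2=\mathrm{pr}_E+\psi\circ\mathrm{pr}_F$ — or rather to the associated three summands — yields
$$
(\mathrm{pr}_E+\varphi\mathrm{pr}_F+\psi\mathrm{pr}_F)^*\sO_E(O_E)\otimes \mathrm{pr}_E^*\sO_E(O_E)
\ \simeq\ (\mathrm{pr}_E+\varphi\mathrm{pr}_F)^*\sO_E(O_E)\otimes(\mathrm{pr}_E+\psi\mathrm{pr}_F)^*\sO_E(O_E)\otimes(\text{corrections from }\mathrm{pr}_E^*,\mathrm{pr}_F^*),
$$
and bookkeeping the pull-backs of $\sO_E(\pm O_E)$ along the projections shows that the $\mathrm{pr}_E^*$ and $\mathrm{pr}_F^*$ correction terms in the definition of $\sM_E$ are exactly what is needed to cancel the excess cube terms. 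This gives the first isomorphism. The second is then immediate: using \eqref{Picard} and collecting the $\mathrm{pr}_E^*$ and $\mathrm{pr}_F^*$ factors,
$$
L(\varphi+\psi,M_E\otimes N_E,M_F\otimes N_F)\simeq\big(1_E\times(\varphi+\psi)\big)^*\sM_E\otimes\mathrm{pr}_E^*(M_E\otimes N_E)\otimes\mathrm{pr}_F^*(M_F\otimes N_F),
$$
and substituting the first isomorphism and regrouping yields $L(\varphi,M_E,M_F)\otimes L(\psi,N_E,N_F)$.

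The main obstacle I anticipate is purely bookkeeping: one must be careful that "$\varphi+\psi$" means pointwise addition of isogenies (so that $1_E\times(\varphi+\psi)$ factors through the addition map on the $E$-side after composing with $(1_E\times\varphi)\times\psi$ suitably), and that the theorem of the cube is being applied in the correct form — I would phrase it as: for a map $g:X\to E$ from any variety and the two projections, $(g_1+g_2+g_3)^*L\otimes\bigotimes_i g_i^*L \simeq \bigotimes_{i<j}(g_i+g_j)^*L$ whenever $L$ is symmetric on $E$, here with $g_1=\mathrm{pr}_E$, $g_2=\varphi\mathrm{pr}_F$, $g_3=\psi\mathrm{pr}_F$ and $L=\sO_E(O_E)$. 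Since $\sO_E(O_E)$ is not symmetric on its own, one either symmetrizes (replacing $\sM_E$ by the symmetric bundle $\sO_{E\times E}(\Delta_E + \text{swap})$, which differs by translation-invariant bundles that are harmless for this identity) or notes that the antisymmetric part contributes a translation-invariant bundle which, being additive in $\varphi$ and $\psi$, again matches up. Nothing here uses positive characteristic, and the statement holds over any field — but since the paper only needs it over $k$, I would not belabor that point.
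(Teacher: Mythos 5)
Your argument is correct in substance, but it takes a genuinely different route from the paper, whose proof is a one-line application of the see-saw principle: all three bundles restrict trivially to $\{O_E\}\times F$, and on each slice $E\times\{y\}$ the two sides restrict to $\sO_E\big(\varphi(y)+\psi(y)-O_E\big)$ and $\sO_E\big(\varphi(y)-O_E\big)\otimes\sO_E\big(\psi(y)-O_E\big)$, which agree because $y\mapsto \sO_E(y-O_E)$ is a group homomorphism; see-saw then gives the isomorphism. Your cube-theorem route also works, and in fact more cleanly than you suggest: writing $\Lambda(L):=m^*L\otimes p_1^*L^{-1}\otimes p_2^*L^{-1}$, the corollary of the theorem of the cube for the three morphisms $\mathrm{pr}_E,\ \varphi\circ\mathrm{pr}_F,\ \psi\circ\mathrm{pr}_F: E\times F\to E$ holds for \emph{any} line bundle $L$ (no symmetry hypothesis is needed), and since $(\varphi\circ\mathrm{pr}_F+\psi\circ\mathrm{pr}_F)^*L=\mathrm{pr}_F^*(\varphi+\psi)^*L$, the $\mathrm{pr}_E^*$ and $\mathrm{pr}_F^*$ terms cancel exactly, yielding $(1_E\times(\varphi+\psi))^*\Lambda(L)\simeq(1_E\times\varphi)^*\Lambda(L)\otimes(1_E\times\psi)^*\Lambda(L)$. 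Two slips should be corrected, though neither sinks the argument: first, $\sO_{E\times E}(\Delta_E)$ is the pull-back of $\sO_E(O_E)$ under the \emph{difference} map $(x,y)\mapsto x-y$, not the addition map, and the discrepancy between the two is not a translation-invariant correction; the clean fix is to note $\sM_E\simeq(1_E\times(-1_E))^*\Lambda(\sO_E(O_E))$ and apply the bilinearity just proved to $-\varphi$ and $-\psi$. Second, $\sO_E(O_E)$ \emph{is} symmetric (since $-O_E=O_E$), so the symmetrization worry in your last paragraph is moot. With these adjustments your proof is complete; what it buys over the paper's is a statement and proof valid for the Mumford bundle of any line bundle on any abelian variety target, while the see-saw argument is shorter and is all that is needed here.
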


  \begin{proof}
 The proof is a simple application of the see-saw principle.
\end{proof}

\noindent If $L_E,L_E'$ and $L_F,L_F'$ are line bundles on $E$ and $F$ respectively such that $a_E:=\deg\,L_E=\deg\, L_E'$ and $a_F=\deg\,L_F=\deg\,L_F'$, then 
in ${\rm CH}^1(E\times F)$ the classes of $L(\varphi,L_E,L_F)$ and $L(\varphi,L_E',L_F')$ coincide. We denote then by  
 $l(\varphi,d_E,d_F)$ the numerical class of $L(\varphi,L_E,L_F)$ (or of $L(\varphi,L_E',L_F')$).

\begin{cor}\label{corPicardprod}
 With notation as in Proposition \ref{Picardprod2}, in ${\rm CH}^1(E\times F)$ there are equalities of classes 
 $$l(\varphi+\psi,d_E+d'_E,d_F+d'_F)\, = \, l(\varphi,d_E,d_F) \, + \, l(\psi,d'_E,d'_F)$$
 where $d_E$ and $d_F$ are the degrees of $M_E$ and $M_F$ respectively, and $d'_E$ and $d'_F$ are the degrees of $N_E$ and $N_F$ respectively.
\end{cor}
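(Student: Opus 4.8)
The plan is to deduce the corollary directly from Proposition \ref{Picardprod2} by passing from an isomorphism of line bundles to an equality of numerical classes in ${\rm CH}^1(E\times F)$. First I would fix representatives: choose a line bundle $M_E$ on $E$ of degree $d_E$, a line bundle $N_E$ on $E$ of degree $d'_E$, a line bundle $M_F$ on $F$ of degree $d_F$, and a line bundle $N_F$ on $F$ of degree $d'_F$. Then $M_E\otimes N_E$ has degree $d_E+d'_E$ and $M_F\otimes N_F$ has degree $d_F+d'_F$, so by the remark preceding the corollary the numerical class of $L(\varphi+\psi,M_E\otimes N_E,M_F\otimes N_F)$ is precisely $l(\varphi+\psi,d_E+d'_E,d_F+d'_F)$, while the numerical classes of $L(\varphi,M_E,M_F)$ and $L(\psi,N_E,N_F)$ are $l(\varphi,d_E,d_F)$ and $l(\psi,d'_E,d'_F)$ respectively.

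Next I would invoke Proposition \ref{Picardprod2}, which gives the isomorphism of line bundles
$$L(\varphi+\psi,M_E\otimes N_E,M_F\otimes N_F)\;\simeq\; L(\varphi,M_E,M_F)\otimes L(\psi,N_E,N_F),$$
and then apply the first Chern class map ${\rm Pic}(E\times F)\to {\rm CH}^1(E\times F)$, which is a group homomorphism and hence sends the tensor product on the right to the sum of the two classes. Composing with the quotient onto numerical equivalence and using the identifications of the previous paragraph yields
$$l(\varphi+\psi,d_E+d'_E,d_F+d'_F)\;=\;l(\varphi,d_E,d_F)\,+\,l(\psi,d'_E,d'_F),$$
which is the assertion.

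There is essentially no obstacle here: the corollary is a formal consequence of Proposition \ref{Picardprod2}. The one point deserving a word of care is the well-definedness of the symbol $l(\varphi,d_E,d_F)$, i.e. that the numerical class does not depend on the line bundles of the prescribed degrees that were picked in the first step. This is exactly the content of the remark immediately before the corollary: two line bundles on $E$ of the same degree differ by a degree-zero — hence numerically trivial — line bundle, whose pull-back along ${\rm pr}_E$ remains numerically trivial, so $L(\varphi,L_E,L_F)$ and $L(\varphi,L'_E,L'_F)$ have the same class in ${\rm CH}^1(E\times F)$; thus the choices made above are immaterial and the displayed equality is independent of them.
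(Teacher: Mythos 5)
Your argument is correct and is exactly the intended one: the paper states the corollary without proof precisely because it follows immediately from Proposition \ref{Picardprod2} by applying the first Chern class (a homomorphism ${\rm Pic}(E\times F)\to{\rm CH}^1(E\times F)$) and using the well-definedness of $l(\varphi,d_E,d_F)$ recorded in the preceding remark. Your extra word on why the classes only depend on the degrees is also the right justification, so there is nothing to add.
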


\noindent Finally we show that any line bundle $L\in {\rm Pic}(E\times F)$ can be realized as a line bundle of the form \eqref{Picard}.

\begin{prop}\label{Picardprod}
  For any line bundle $L\in {\rm Pic}(E\times F)$ there exists a morphism 
 $\varphi:F\rightarrow E$ and line bundles $L_E\in {\rm Pic}(E)$ and $L_F\in {\rm Pic}(F)$ such that $L\simeq  L(\varphi,L_E,L_F)$. 
 \end{prop}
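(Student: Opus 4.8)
The plan is to use the structure of the Néron--Severi group of a product of elliptic curves together with the description of line bundles coming from the Mumford bundle established in Proposition \ref{Picardprod2} and Corollary \ref{corPicardprod}. Recall that for elliptic curves $E$ and $F$ one has a direct sum decomposition
\[
{\rm NS}(E\times F)\;\simeq\;{\rm NS}(E)\,\oplus\,{\rm NS}(F)\,\oplus\,{\rm Hom}(F,E),
\]
where the last summand records, for a line bundle $L$, the homomorphism $\varphi_L:F\to E$ obtained by restricting $L$ to fibres and composing with the principal polarizations of $E$ and $F$ (equivalently, $\varphi_L$ is the component of $\phi_L:E\times F\to\widehat{E}\times\widehat{F}$ mixing the two factors, read through the canonical identifications $E\simeq\widehat{E}$, $F\simeq\widehat{F}$). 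The first step is to make this identification precise and to check that the line bundle $L(\varphi,L_E,L_F)$ of \eqref{Picard} realizes the triple $\big(\deg L_E,\deg L_F,\varphi\big)$ under this decomposition: this is exactly the content of the see-saw computation already used to prove Proposition \ref{Picardprod2}, since $(1_E\times\varphi)^*\sM_E$ restricts to a degree-zero bundle on each factor and induces precisely $\varphi$ on the $\mathrm{Hom}$-part.

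Granting this, given an arbitrary $L\in{\rm Pic}(E\times F)$, I would proceed as follows. Let $\varphi:=\varphi_L\in{\rm Hom}(F,E)$ be the homomorphism attached to $L$ as above, and let $a_E$, $a_F$ be the degrees of $L|_{E\times\{O_F\}}$ and $L|_{\{O_E\}\times F}$ respectively. Choose line bundles $L_E\in{\rm Pic}(E)$ with $\deg L_E=a_E$ and $L_F\in{\rm Pic}(F)$ with $\deg L_F=a_F$. Then $L$ and $L(\varphi,L_E,L_F)$ have the same image in ${\rm NS}(E\times F)$, by the matching of the three invariants. Hence
\[
L\otimes L(\varphi,L_E,L_F)^{-1}\;\in\;{\rm Pic}^0(E\times F)\;\simeq\;\widehat{E}\times\widehat{F},
\]
so it is of the form ${\rm pr}_E^*P_E\otimes{\rm pr}_F^*P_F$ for suitable $P_E\in{\rm Pic}^0(E)$, $P_F\in{\rm Pic}^0(F)$. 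Absorbing $P_E$ into $L_E$ and $P_F$ into $L_F$ (replacing $L_E$ by $L_E\otimes P_E$ and $L_F$ by $L_F\otimes P_F$, which does not change the degrees), we obtain $L\simeq L(\varphi,L_E\otimes P_E,L_F\otimes P_F)$, as desired. Note that the morphism $\varphi$ need not be an isogeny — if $L|_{\{O_E\}\times F}$ is trivial on restriction along the other factor the map $\varphi$ may be constant — so the statement is phrased with a morphism, not an isogeny; the constant morphism is allowed and corresponds to the pullback of bundles from the two factors.

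The main obstacle is purely the bookkeeping in the first step: one must verify carefully that the homomorphism $F\to E$ attached to $L(\varphi,L_E,L_F)$ via the Néron--Severi decomposition is genuinely $\varphi$ and not, say, its dual or its negative, since a sign or transpose error here would break the cancellation argument. This is a standard but slightly delicate computation with the Poincaré bundle and the definition of $\sM_E$ as $\sO_{E\times E}(\Delta_E)\otimes{\rm pr}_1^*\sO_E(-O_E)\otimes{\rm pr}_2^*\sO_E(-O_E)$, which one recognizes as (a twist of) the Poincaré bundle on $E\times E\simeq E\times\widehat{E}$; the identity $(1_E\times(\varphi+\psi))^*\sM_E\simeq(1_E\times\varphi)^*\sM_E\otimes(1_E\times\psi)^*\sM_E$ from Proposition \ref{Picardprod2} is exactly the additivity that makes $\varphi\mapsto(1_E\times\varphi)^*\sM_E$ a homomorphism ${\rm Hom}(F,E)\to{\rm NS}(E\times F)$, and one checks it splits the projection to the $\mathrm{Hom}$-summand by testing on a single nonzero $\varphi$. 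Once this is in place, the rest is the two-line Picard-group argument above.
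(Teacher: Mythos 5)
Your argument is correct, but it is organized differently from the paper's. The paper works directly at the level of ${\rm Pic}$: it sets $L_E:=L|_{E\times\{O_F\}}$, $L_F:=L|_{\{O_E\}\times F}$, so that $L':=L\otimes{\rm pr}_E^*L_E^{-1}\otimes{\rm pr}_F^*L_F^{-1}$ is a family of degree-zero line bundles on $E$ parametrized by $F$ and trivialized along $\{O_E\}\times F$; the universal property of $\widehat{E}$ with its normalized Poincar\'e bundle $\sP_E$ then gives $L'=(1_E\times\widetilde{\varphi})^*\sP_E$ on the nose, and one transports $\widetilde{\varphi}$ through $\eta:E\simeq\widehat{E}$, $\eta(x)=\sO_E(x-O_E)$, using $\sM_E\simeq(1_E\times\eta)^*\sP_E$. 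Your route instead matches invariants in ${\rm NS}(E\times F)\simeq{\rm NS}(E)\oplus{\rm NS}(F)\oplus{\rm Hom}(F,E)$ and then corrects by an element of ${\rm Pic}^0(E\times F)\simeq\widehat{E}\times\widehat{F}$, absorbing it into $L_E$ and $L_F$; this is fine (both inputs hold in any characteristic), but note that the standard proof of the decomposition you invoke is essentially the paper's argument (divisorial correspondences via the Poincar\'e bundle), so the paper's version is the more economical and self-contained one, needing neither the ${\rm NS}$ decomposition nor the splitting of ${\rm Pic}^0$ of a product. One small caveat in your bookkeeping step: verifying that $\varphi\mapsto(1_E\times\varphi)^*\sM_E$ hits the correct ${\rm Hom}$-component cannot be done by ``testing on a single nonzero $\varphi$,'' since ${\rm Hom}(F,E)$ may have rank bigger than one (e.g.\ for isogenous or supersingular curves, which are exactly the cases relevant later in the paper); but no test case is needed, because the restriction computation $(1_E\times\varphi)^*\sM_E|_{E\times\{y\}}\simeq\sO_E(\varphi(y)-O_E)$ identifies the induced map $F\rightarrow\widehat{E}$ as $\eta\circ\varphi$ uniformly in $\varphi$, which closes that step for all $\varphi$ at once.
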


\begin{proof}
 Denote by $L_E$ the restriction of $L$ to $E\times \{O_F\}$, and similarly let $L_F$ be the restriction of $L$ to 
 $\{O_E\}\times F$. Set now 
 $L' \;:=\: L\, \otimes {\rm pr}_E^*L_E^{-1}\, \otimes \, {\rm pr}_F^* L_F^{-1}.$ We note that the restriction of $L'$ to 
 $\{O_E\}\times F$ is trivial, while the restrictions $L'|_{E\times \{y\}}$ lie in ${\rm Pic}^0(E)$ for all $y\in F$. Thus by the 
 universal property of the dual variety (\cite[Theorem on p. 117]{Mum}), 
 there exists a unique morphism $\widetilde \varphi:F\rightarrow \widehat{E}$ such that 
 $$L' \; = \; (1_E\times \widetilde\varphi)^*\sP_E$$
 where $\sP_E$ is the normalized Poincar\'{e} line bundle on $E\times \widehat{E}$ (namely the restrictions of $\sP_{E}$ to 
 $\{O_E\}\times \widehat{E}$ and $E\times \{O_{\widehat E}\}$ are trivial).
 Moreover $\sM_E\simeq (1_E\times \eta)^{*}\sP_E$ via the isomorphism $\eta(x)=\sO_E(x-O_E)$. Hence 
 $L'\simeq \big(1_E\times (\eta^{-1}\widetilde{\varphi}) \big)^*\sM_E$ and the conclusion follows by setting $\varphi=\eta^{-1}\widetilde{\varphi}$.
\end{proof}

\subsection{Lifting results}
Let $k$ be a perfect field of positive characteristic $p$ and let $W=W(k)$ be the ring of Witt vectors with quotient field $K$. 
We recall that $W$ is a complete discrete valuation ring such that $K$ is of 
characteristic zero (see for instance \cite[\S 11.1]{Li}). With $W$ we will also denote a finite ramified extension of the ring of Witt vectors $W(k)$. 
 If $X$ is a smooth projective scheme over $k$, we say that $\psi:\mathcal{X}\rightarrow W$ is a projective lift of $X$ 
if $\mathcal{X}$ is a projective scheme, the morphism $\psi$ is flat, and the closed fiber $\mathcal{X}_k$ is isomorphic to $X$. 
Grothendieck's existence theorem establishes that smooth curves always lift, as well as the line bundles on them. 
Moreover ordinary abelian varieties
admit a \emph{canonical lift} over $W$ characterized 
by the fact that the absolute Frobenious lifts sideways with the abelian variety (we recall that an 
abelian variety $A$ is ordinary if $A[p](k)\simeq (\rz / p\rz)^{\dim A}$). 
We refer to \cite[Appendix, Theorem 1]{MSN} for the proof of the following result.
\begin{theorem}\label{canonicallift}
Let $A$ be an ordinary abelian variety over a perfect field $k$ of positive characteristic $p$. Then there exists a projective lift
$\sA\rightarrow W$ of $A$ together with a morphism $F_{\sA}:\sA\rightarrow \sA$ compatible with the Frobenious of $W$ such that 
$F_{\sA|A}$ is the absolute Frobenious $F_A$ of $A$. The pair $(\sA,F_{\sA})$ is called \emph{canonical lift} and 
is unique up to a unique isomorphism inducing 
the identity on $A$. Moreover, the restriction morphism ${\rm Pic}(\sA)\rightarrow {\rm Pic}(A)$ is surjective
and $${\rm Pic}(\sA)_{F_{\sA}}:=\{\sL\in {\rm Pic}(\sA) \, | \, F_{\sA}^*\sL \simeq \sL^{\otimes p}   \}
\simeq {\rm Pic}(A).$$
Finally, if $\varphi:A\rightarrow B$ is a morphism between ordinary abelian varieties, then there exists a unique morphism 
$\widetilde{\varphi}:\sA\rightarrow \sB$ of canonical liftings such that $F_{\sB}\circ \, \widetilde{\varphi} = \, \widetilde{\varphi}\circ F_{\sA}$
and $\widetilde{\varphi}_{|A} = \varphi$.
\end{theorem}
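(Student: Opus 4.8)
The plan is to derive everything from \textbf{Serre--Tate deformation theory}; here $W$ denotes the genuine ring of Witt vectors $W(k)$, since the uniqueness statement forces us to work over the unramified base. Over a Noetherian complete local ring $R$ with residue field $k$ --- in particular over $W$ --- the functor $\mathcal{X}\mapsto \mathcal{X}[p^{\infty}]$ induces an equivalence between formal deformations of $A$ over $R$ and deformations of the $p$-divisible group $A[p^{\infty}]$ over $R$, and such formal deformations of an abelian variety algebraize since abelian schemes are projective. As $A$ is ordinary, $A[p^{\infty}]$ sits in a functorial connected--\'etale exact sequence $0\to A[p^{\infty}]^{0}\to A[p^{\infty}]\to A[p^{\infty}]^{\rm et}\to 0$ in which $A[p^{\infty}]^{0}$ is of multiplicative type (its Cartier dual is \'etale) and $A[p^{\infty}]^{\rm et}$ is \'etale. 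Over $W$ the \'etale quotient lifts uniquely by \'etaleness, and so does the multiplicative subgroup since it is determined by its \'etale Cartier dual; hence the \emph{split} extension $\widetilde{G}:=\widetilde{G}^{0}\times \widetilde{G}^{\rm et}$ is a distinguished deformation of $A[p^{\infty}]$ over $W$. By Serre--Tate it corresponds to a formal abelian scheme over $W$; a polarization of $A$ lifts to it --- dualizing commutes with the split-deformation construction, so the polarization homomorphism $\phi$ lifts, and its lift is again a polarization --- whence the formal scheme is formally projective and Grothendieck's existence theorem algebraizes it to a projective abelian scheme $\sA\to W$, a projective lift of $A$ in the sense defined above. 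This $\sA$ is the candidate for the canonical lift.

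Next I would construct $F_{\sA}$. Base change $\sA$ along ${\rm Spec}$ of the Witt Frobenius $\sigma\colon W\to W$ to obtain a second lift $\sA^{\sigma}$, whose $p$-divisible group $\widetilde{G}^{\sigma}=(\widetilde{G}^{0})^{\sigma}\times (\widetilde{G}^{\rm et})^{\sigma}$ is again split; its closed fiber is $A^{(p)}$ and the natural projection $\sA^{\sigma}\to \sA$ lies over ${\rm Spec}\,\sigma$. The relative Frobenius isogeny $A[p^{\infty}]\to A^{(p)}[p^{\infty}]$ respects the connected--\'etale sequence and, on each factor, lifts uniquely over $W$ --- an isomorphism on the \'etale part, multiplication by $p$ on the multiplicative part --- so by Serre--Tate it lifts to an isogeny $\sA\to \sA^{\sigma}$ reducing to the relative Frobenius of $A$. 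Composing with the projection $\sA^{\sigma}\to \sA$ yields the desired $F_{\sA}\colon \sA\to \sA$: it is compatible with $\sigma$ and restricts on the closed fiber to the absolute Frobenius $F_{A}$.

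For uniqueness and the Frobenius-characterization I would use \emph{Serre--Tate coordinates}: a deformation of $A[p^{\infty}]$ over $W$ is classified by a parameter $q\in {\rm Hom}\big(T_{p}(A[p^{\infty}]^{\rm et})\otimes_{\rz_{p}}T_{p}(\widehat{A}[p^{\infty}]^{\rm et}),\,1+\mathfrak{m}_{W}\big)$, with $q=1$ the split --- hence canonical --- deformation. Conversely, if some lift of $A$ admits a Frobenius lift compatible with $\sigma$, then on $p$-divisible groups the relative Frobenius of $A$ lifts to an isogeny from the lift's $p$-divisible group onto its $\sigma$-twist, which upon tracing through Serre--Tate coordinates forces $q^{\sigma}=q^{p}$; since $\sigma$ preserves the valuation of $W$ whereas raising to the $p$-th power strictly increases the valuation of $q-1$ when $q\neq1$, we get $q=1$, so the lift is the split one, namely $\sA$. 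The isomorphism can be pinned down to induce the identity on $A$ because an automorphism of $\sA$ trivial on the closed fiber is trivial by rigidity of abelian schemes; this gives at once that $(\sA,F_{\sA})$ is unique up to a unique such isomorphism.

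Finally, for line bundles. Since ${\rm Pic}^{0}(\sA/W)=\widehat{\sA}$ is smooth and proper over the complete local ring $W$, the reduction $\widehat{\sA}(W)\to \widehat{A}(k)$ is surjective, so classes in ${\rm Pic}^{0}(A)$ lift; classes in ${\rm NS}(A)$ lift because ${\rm NS}(A)$ is the group of symmetric homomorphisms $A\to \widehat{A}$ and homomorphisms of ordinary abelian varieties lift to the canonical lifts, and together these give the surjectivity of ${\rm Pic}(\sA)\to {\rm Pic}(A)$. The quoted lifting of homomorphisms is the functoriality assertion: one lifts $\varphi\colon A\to B$ first on $p$-divisible groups (any homomorphism automatically respects the connected--\'etale sequences, and the \'etale and multiplicative parts of the split deformations lift functorially) and then applies Serre--Tate to obtain $\widetilde{\varphi}\colon \sA\to \sB$; the two morphisms $F_{\sB}\circ \widetilde{\varphi}$ and $\widetilde{\varphi}\circ F_{\sA}$ are lifts of the single morphism $F_{B}\circ \varphi=\varphi\circ F_{A}$ (functoriality of the absolute Frobenius), hence coincide by rigidity, which also gives uniqueness of $\widetilde{\varphi}$. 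For ${\rm Pic}(\sA)_{F_{\sA}}\simeq {\rm Pic}(A)$: every line bundle $L$ on $A$ satisfies $F_{A}^{*}L\simeq L^{\otimes p}$ automatically, and the restriction map ${\rm Pic}(\sA)_{F_{\sA}}\to {\rm Pic}(A)$ is bijective by the same valuation/torsion-freeness argument used for the lift $\sA$ itself --- a line bundle fixed by $F_{\sA}$ up to $p$-th power is the ``canonical lift'' of its restriction, which exists and is unique. \textbf{The main obstacle}, just as in the cited reference, is the Serre--Tate theorem together with the construction of the coordinate $q$; granting those, the algebraization, rigidity, and valuation-comparison steps are formal.
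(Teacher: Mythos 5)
The paper does not actually prove this statement: Theorem \ref{canonicallift} is quoted verbatim with a pointer to \cite[Appendix, Theorem 1]{MSN}, and the argument there is precisely the Serre--Tate theory you outline (the split connected--\'etale deformation as the canonical lift, Serre--Tate coordinates, the valuation comparison $q^{\sigma}=q^{p}\Rightarrow q=1$ for the Frobenius characterization, rigidity for uniqueness and functoriality). So your overall route is the intended one, and the existence, uniqueness, Frobenius-lift and functoriality parts of your sketch are essentially correct.

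The genuine gap is in the Picard statements. Your step ``classes in ${\rm NS}(A)$ lift because ${\rm NS}(A)$ is the group of symmetric homomorphisms and homomorphisms lift'' does not do what you need: lifting $\phi_L$ to a symmetric homomorphism $\widetilde{\lambda}:\sA\rightarrow\widehat{\sA}$ does not by itself produce a line bundle on $\sA$; the only bundle it hands you is $(1\times\widetilde{\lambda})^*\sP$, whose restriction has N\'eron--Severi class $2[L]$, so as written you lift only $2\,{\rm NS}(A)$. Repairing this needs either the fact that a fiberwise polarization is \'etale-locally of the form $\phi_{\mathcal L}$ together with strict Henselianity of $W$ (which presumes $k$ algebraically closed, whereas the statement is over a perfect field), or, closer to the cited proof, the Serre--Tate criterion for deforming line bundles: $L$ lifts to the deformation with parameter $q$ if and only if $q\circ(1\times\phi_L)$ is symmetric, which is automatic when $q=1$; one then lifts compatibly over each $W/p^{n}$ and algebraizes by Grothendieck existence. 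The same caveat applies to your projectivity step: algebraization requires an ample line bundle on the formal lift, not merely a lifted polarization homomorphism (here $(1\times\widetilde{\lambda})^*\sP$ at each finite level does suffice, since any relatively ample bundle works). Finally, ${\rm Pic}(\sA)_{F_{\sA}}\simeq{\rm Pic}(A)$ is asserted ``by the same valuation argument'' but not proved: surjectivity requires showing that a lift of $L$ can be chosen $F_{\sA}$-compatibly (this is where ordinariness enters, via the Frobenius action on the relevant spaces $H^{i}(A,\sO_A)$ or the multiplicative description of ${\rm Pic}^0$ of the canonical lift), and injectivity requires running the valuation argument on the formal group of $\widehat{\sA}$; neither follows formally from the uniqueness of $\sA$ itself.
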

\noindent Another result we will need in the sequel is the existence of liftings of \'{e}tale covers. A reference for the following theorem is 
\cite[\S IX, 1.10]{SGA}.
\begin{theorem}\label{etalecover}
Let $S$ be the spectrum of a complete local Noetherian ring, and let $X\rightarrow S$ be a proper $S$-scheme. 
Moreover denote by $X_0$ the closed fiber over the unique closed point of $S$. Then the assignment $X'\mapsto X'\times_{X}X_0$ 
yields an equivalence between the category of \'{e}tale coverings of $X$ and the category of \'{e}tale coverings of $X_0$.
\end{theorem}

\subsection{Moduli spaces}
Let $A$ be an abelian surface over an algebraically closed field $k$ and 
let $h\in {\rm NS}(A)$ be the class of an ample line bundle. Given a vector $v=(r,l,\chi) \in {\rm CH}^*(A)$ with integral coefficients, 
we consider the moduli space $\sM_h(v)$ of Gieseker-semistable sheaves with Mukai vector $v$, where stability is computed with respect to $h$. 

\begin{theorem}\label{modulispace}
If $r>0$ and $\chi$ are coprime integers, then every Gieseker-semistable sheaf on $A$ with Mukai vector $v$ 
is Gieseker-stable. Moreover, if in addition $\langle v,v \rangle_A=0$, then 
$\sM_h(v)$ is a smooth equidimensional projective variety of dimension two which admits a universal family $\sU$ on $\sM_h(v)\times A$.
Furthermore, every irreducible component $M$ of $\sM_h(v)$ has trivial canonical bundle and the Fourier--Mukai 
functor $\Phi_{\sU}:\rd(M)\rightarrow \rd(A)$ induces an equivalence of derived categories.
\end{theorem}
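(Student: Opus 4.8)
The plan is to transport to positive characteristic Mukai's study of moduli of simple sheaves on surfaces with trivial canonical bundle, relying on the fact that the construction of moduli of Gieseker-semistable sheaves, the deformation theory of coherent sheaves, and the categorical criteria for full faithfulness and for being an equivalence are all available over an arbitrary algebraically closed field, exactly as already exploited in \cite{LO}. I would begin with the semistable-versus-stable point. Since $\gcd(r,\chi)=1$, the Mukai vector $v$ is primitive; if some Gieseker-semistable $E$ with $v(E)=v$ failed to be stable, there would be a saturated subsheaf $F\subsetneq E$ with $0<{\rm rk}(F)<r$ and the same reduced $h$-Hilbert polynomial as $E$, and comparing the constant terms (the Euler characteristics) of the $h$-Hilbert polynomials of $F$ and $E$ would force $r\cdot\chi(F)={\rm rk}(F)\cdot\chi$, whence $r\mid{\rm rk}(F)$ because $\gcd(r,\chi)=1$, a contradiction. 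Thus every such $E$ is stable and in particular simple.

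Next I would show that $\sM_h(v)$ is a smooth equidimensional projective surface carrying a universal family. Projectivity follows from the construction of moduli of Gieseker-semistable sheaves over an arbitrary field, and by the previous step its closed points are exactly the stable (hence simple) sheaves with Mukai vector $v$. At such a point $[E]$ the tangent space is ${\rm Ext}^1(E,E)$ and obstructions lie in ${\rm Ext}^2(E,E)$; since $\omega_A\cong\sO_A$, Serre duality identifies ${\rm Ext}^2(E,E)\cong{\rm Hom}(E,E)^\vee\cong k$ and makes the trace map ${\rm Ext}^2(E,E)\to H^2(\sO_A)\cong k$ an isomorphism, so the Mukai–Artamkin computation of the quadratic part of the obstruction class — whose proof is characteristic-free — shows the obstruction vanishes. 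Hence $\sM_h(v)$ is smooth at $[E]$, and using \eqref{isometry1} together with $\langle v,v\rangle_A=0$,
$$\dim_{[E]}\sM_h(v)\;=\;\dim_k{\rm Ext}^1(E,E)\;=\;2-\chi(E,E)\;=\;2+\langle v,v\rangle_A\;=\;2,$$
so $\sM_h(v)$ is smooth and equidimensional of dimension two (its nonemptiness is discussed at the end). For the universal family I would observe that $\gcd(r,\chi)=1$ yields a class $w\in{\rm CH}^*(A)_{\rm num}$ with $\langle v,w\rangle_A=1$ (e.g. $w=(b,0,-a)$ with $ar-b\chi=1$), realized by an explicit object of $\rd(A)$; by the standard argument this kills the Brauer obstruction to a universal sheaf on $\sM_h(v)\times A$, so $\sM_h(v)$ is a fine moduli space carrying a universal sheaf $\sU$.

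It then remains to analyze the Fourier–Mukai functor. I would fix an irreducible component $M\subset\sM_h(v)$, set $\sU_M=\sU|_{M\times A}$ and $\Phi:=\Phi_{\sU_M}:\rd(M)\to\rd(A)$, so that $\Phi(k(x))\cong E_x$, the stable sheaf classified by $x\in M$. For closed points $x,y\in M$ the sheaves $E_x,E_y$ are stable with the same reduced $h$-Hilbert polynomial, so ${\rm Hom}(E_x,E_y)=0$ unless $E_x\cong E_y$; Serre duality gives ${\rm Ext}^2(E_x,E_y)\cong{\rm Hom}(E_y,E_x)^\vee$, and $\chi(E_x,E_y)=-\langle v,v\rangle_A=0$ by \eqref{isometry1}. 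Hence ${\rm Ext}^i(E_x,E_y)=0$ for all $i$ when $x\neq y$, while $\dim_k{\rm Ext}^i(E_x,E_x)=1,2,1$ for $i=0,1,2$ respectively and vanishes otherwise; these are precisely the dimensions of ${\rm Hom}^i_{\rd(M)}(k(x),k(y))$ since $M$ is smooth of dimension two, so the Bondal–Orlov criterion (valid over $k$, cf. \cite{LO}) shows $\Phi$ is fully faithful. Moreover the universal family equips $M$ with a regular $2$-form — the Yoneda product on ${\rm Ext}^1(E_x,E_x)$ followed by the trace to $H^2(\sO_A)\cong k$ — which is nondegenerate at every point because $\dim_k{\rm Ext}^1(E_x,E_x)=2$ and the pairing is perfect by Serre duality, so $\omega_M\cong\sO_M$. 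Finally, $\dim M=\dim A$, $A$ is connected, and $\omega_M$ and $\omega_A$ are trivial, so Bridgeland's criterion (see, e.g., \cite{Huy}) upgrades the fully faithful functor $\Phi$ to an equivalence.

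The main obstacle is the content of the second paragraph, namely the smoothness and, above all, the nonemptiness of $\sM_h(v)$ in characteristic $p$. Smoothness reduces to the characteristic-free Mukai–Artamkin vanishing of the obstruction recalled above, and projectivity plus the good behaviour of the stability condition rest on Langer's foundational work on moduli of sheaves over arbitrary bases; these present no essential difficulty beyond invoking the right references. Nonemptiness is the deepest point: one uses that $v$ is primitive, isotropic and of positive rank on an abelian surface, reducing for instance to $v=(1,0,0)$ — whose moduli space is a translate of $\widehat A$ — by applying a suitable composition of the twists $T_A(H^{\otimes n})$ and the equivalence $\sS_A$ of Section \ref{autoeq}, tracking $v$ via \eqref{actionH} and the action of $\sS_A^{\rm CH}$, and checking as in \cite{BM} that such transforms carry stable sheaves to shifts of stable sheaves; alternatively, one quotes the positive-characteristic existence theorems for stable sheaves on abelian surfaces directly. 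The categorical steps of the last paragraph are routine in this setting, the relevant criteria having already been used in positive characteristic in \cite{LO}.
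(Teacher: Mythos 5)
Your proposal is correct and follows essentially the same route as the paper, which simply cites the relevant sources where you reconstruct the arguments: coprimality of $r$ and $\chi$ forcing semistable $=$ stable (the paper quotes \cite[Remark 4.6.8]{HL}), Mukai's deformation-theoretic smoothness, dimension count and symplectic form (\cite[Corollary 0.2]{Muk3}), the $\gcd$ argument killing the obstruction to a universal family (\cite[Theorem A.6]{Muk5}), and full faithfulness via the pointwise Ext computation (strong simplicity, \cite[Proposition 3.12]{Muk5} with \cite[Theorem 1.33]{BBHP}) upgraded to an equivalence by the trivial-canonical-bundle criterion \cite[Corollary 7.8]{Huy}. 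Your additional remarks on characteristic-independence and on nonemptiness go beyond what the paper's proof records, but do not change the approach.
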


\begin{proof}
 The first assertion follows by \cite[Remark 4.6.8]{HL} (\emph{cf}. also \cite[Remark 6.1.9]{HL}), while the second follows
 by \cite[Corollary 0.2]{Muk3}.
  The existence of a universal sheaf follows by \cite[Theorem A.6]{Muk5}. 
 Now we show that the functor $\Phi_{\sU}:\rd(M)\rightarrow \rd(A)$ 
 induces an equivalence of derived categories for any irreducible component $M$ of $\sM_h(v)$. 
 By \cite[Proposition 3.12]{Muk5} the sheaf $\sU$ is strongly simple, and hence it yields a fully faithful Fourier--Mukai functor 
 $\Phi_{\sU}$ (\emph{cf}. \cite[Theorem 1.33]{BBHP}). However as both $A$ and $M$ are two-dimensional smooth varieties 
with trivial canonical bundles, we conclude by \cite[Corollary 7.8]{Huy} that 
 $\Phi_{\sU}$ is an equivalence.
 \end{proof}
 
 \subsection{Relative Moduli Spaces}\label{modulispacech} We also need to consider relative moduli spaces of 
 Gieseker-semistable sheaves on a projective lift $f:\sA\rightarrow W$ of $A$ over the ring of 
 Witt vectors. Let $h$ as before be the class of an ample line bundle, 
 and let $\widetilde{h}$ be a lifting of $h$ to $\sA$. Let $v=(r,l,\chi) \in {\rm CH}^*(A)$ 
 be a vector with integral coefficients such that $l$ is the class of a line bundle $L$ that lifts to 
 a line bundle $\widetilde{L}$ on $\sA$. Moreover set $\widetilde{v}=(r,\widetilde{l},\chi)$ where $\widetilde{l}$ is the class of $\widetilde{L}$.
 By \cite[Theorem 0.7]{Ma} (or \cite[Theorem 0.2]{La}) there exists a projective 
 scheme $\sM_{\sA/W}(\widetilde{v})\rightarrow W$ of finite type that is a coarse moduli space for 
 the functor of families of pure Gieseker-semistable sheaves
 with Mukai vector $v$ on the geometric fibers of $f$ (where stability is computed with respect to $\widetilde{h}$). 
 Moreover, there exists an open subscheme $\sM^s_{\sA/W}(\widetilde{v})\subset \sM_{\sA/W}(\widetilde{v})$ 
 that is a coarse moduli space for the subfunctor of families of pure Gieseker-stable sheaves. 
 Thus, if $(r,\chi)=1$ (\emph{i.e.} every Gieseker-semistable sheaf on any geometric fiber of $f$ is Gieseker-stable), then 
 $\sM^s_{\sA / W}(\widetilde{v})=\sM_{\sA/W}(\widetilde{v})$. Moreover, if 
 we denote by $A_k$ the closed fiber of $f$ and by $A_{\eta}$ the geometric generic fiber, then there are isomorphisms
 $$\sM_{\sA/W}(\widetilde{v})_{|A_k} \simeq \sM_h(v)\quad \mbox{ and }\quad \sM_{\sA/W}(\widetilde{v})_{|A_{\eta}} \simeq \sM_{h_{\eta}}(v_{\eta})$$
 where $\sM_{h_{\eta}}(v_{\eta})$ is the moduli space of pure Gieseker-stable sheaves with vector $v_{\eta}=(r,\tilde{l}_{|A_{\eta}}, \chi)$ 
 on $A_{\eta}$ 
 and $h_{\eta}$ is the restriction of $\widetilde{h}$ to $A_{\eta}$. Finally, if $\langle v,v\rangle_A=0$, 
 then $\sM_{h_{\eta}}(v_{\eta})$ is non-empty as by \cite{Lan}
 we have $\langle v_{\eta},v_{\eta}\rangle_{A_{\eta}}=\langle v,v\rangle_A=0$. 
 Thus the morphism $\widetilde{f}:\sM_{\sA/W}(\widetilde{v})\rightarrow W$ is flat and hence
 a projective lift of $M_{h}(v)$ over $W$.

\section{Filtered derived equivalences}
 
An equivalence $\Phi:\rd(A)\rightarrow \rd(B)$ of derived categories of abelian surfaces 
is \emph{filtered} if 
\begin{equation*}\label{filtereddef}
\Phi^{\rm CH} (0, \, 0, \, 1 \big)\; = \; (0, \, 0, \,1).
\end{equation*}
In \cite[Theorem 6.1]{LO} the authors prove that a filtered equivalence of $K3$ surfaces 
induces an isomorphism between them.
The proof of this statement is quite involved and uses deformation theory in order to 
lift the derived equivalence between $K3$ surfaces in positive characteristic, to an equivalence of $K3$ surfaces in characteristic zero. 
Here we notice that a filtered equivalence of abelian surfaces still induces an isomorphism. As the kernel of an equivalence of abelian 
varieties is a sheaf up to shift, its proof turns out to be rather simple.

\begin{prop}\label{filteredequivalence}
Let $\Phi:\rd(A)\rightarrow \rd(B)$ 
be a filtered equivalence of derived categories of two abelian surfaces.
Then there exists an isomorphism $f:A\rightarrow B$ and a line bundle $L$ such that there is an isomorphism
$\Phi\simeq (-\otimes L)\circ f_*$ up to shift. In particular $A$ and $B$ are isomorphic.
\end{prop}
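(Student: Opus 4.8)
The plan is to exploit the special feature of derived equivalences of abelian varieties recalled in the introduction: the kernel of such an equivalence is (up to shift) a sheaf on $A\times B$, and moreover skyscraper sheaves are sent to skyscraper sheaves up to shift. More precisely, I would first invoke the structure theory (Orlov, Polishchuk, or the account in \cite{Huy}) to reduce to the situation where $\Phi=\Phi_{\sE}$ with $\sE$ a sheaf on $A\times B$ flat over both factors, after possibly shifting; the filtered hypothesis $\Phi^{\rm CH}(0,0,1)=(0,0,1)$ fixes the shift ambiguity. The key point is that for each $a\in A$ the object $\Phi(k(a))$ has Mukai vector $\Phi^{\rm CH}(v(k(a)))=\Phi^{\rm CH}(0,0,1)=(0,0,1)$, hence is a length-one object; combined with the fact that $\Phi$ takes the point-like objects $k(a)$ to shifts of sheaves (a property of abelian-variety equivalences), one concludes $\Phi(k(a))\simeq k(b)$ for a unique closed point $b=f(a)\in B$.

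Next I would upgrade this pointwise statement to a morphism. Flatness of $\sE$ over $A$ gives that $\sE$ restricted to $\{a\}\times B$ is $\Phi(k(a))$ up to a twist, so the support of $\sE$ defines, via the assignment $a\mapsto b$, a morphism $f\colon A\to B$; standard arguments (e.g. the seesaw-type reasoning, or directly that $\sE$ is the structure sheaf of the graph tensored with a line bundle) show $\sE\simeq (1_A\times f)^*(\text{something})$, and in fact $\sE\simeq \Gamma_{f*}L$ for a line bundle $L$ on $A$ (identifying the graph $\Gamma_f\cong A$). That $f$ is an isomorphism follows because $\Phi$ is an equivalence: the inverse equivalence produces an inverse morphism $g\colon B\to A$ in the same way, and the compositions are the identity on points, hence (being morphisms of abelian varieties up to translation, or simply finite birational morphisms of smooth varieties) isomorphisms. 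Translating so that $f$ is a group homomorphism is a harmless normalization absorbed into $L$.

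With $\sE\simeq \Gamma_{f*}L$ in hand, the Fourier--Mukai functor $\Phi_{\sE}$ unwinds directly: $\Phi_{\sE}(\sF)=\rr p_{2*}(p_1^*\sF\otimes \Gamma_{f*}L)$, and projection formula plus the fact that $p_2|_{\Gamma_f}$ is the isomorphism $f$ identify this with $f_*(\sF\otimes L)$, i.e. $\Phi\simeq (-\otimes L)\circ f_*$ up to the shift we fixed at the start. Since $f$ is an isomorphism, $A\simeq B$ as claimed.

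\textbf{Main obstacle.} The delicate step is the second one: passing from the pointwise bijection $a\mapsto b$ to the statement that $\sE$ is actually (a line bundle pushed forward from) the graph of a morphism, rather than merely a family of skyscrapers set-theoretically. The cleanest route is to use that $\Phi(k(a))$ is a genuine sheaf concentrated at a point with length $1$ for every $a$, so $\sE$ is $A$-flat with one-dimensional fibers of length one; such a sheaf, with $0$-dimensional support in each fiber, must be the pushforward of a line bundle along a section $A\to A\times B$, i.e. along the graph of a morphism. Making this rigorous — e.g. via the theory of relative $0$-cycles of length one, or by citing the corresponding step in Orlov's / Polishchuk's treatment of abelian-variety equivalences — is where the real content lies; once it is granted, the remaining computations are the routine projection-formula manipulations sketched above.
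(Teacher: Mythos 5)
Your proposal is correct and follows essentially the same route as the paper: use that equivalences of abelian varieties send skyscraper sheaves to sheaves up to shift, note that the filtered hypothesis forces the Mukai vector $(0,0,1)$ so these sheaves are again skyscrapers, and then conclude that $\Phi\simeq(-\otimes L)\circ f_*$ up to shift. The only difference is that the step you flag as the main obstacle (upgrading the pointwise assignment $a\mapsto f(a)$ to the statement that the kernel is a line bundle on the graph of an isomorphism) is exactly the standard result the paper simply cites, namely \cite[Corollary 5.23]{Huy}, so no new argument is needed there.
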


\begin{proof}
Equivalences of derived categories of abelian varieties 
send (up to shift) structure sheaves of points $\sO_x$ to sheaves. 
This is proved in \cite[Lemma 10.2.6]{Br} in characteristic zero, but its proof extends to positive characteristic
without any change. Hence we can suppose that $\Phi(\sO_x)$ is a sheaf with Mukai vector $(0,0,1)$, so it is itself a skyscraper sheaf.
Since the argument holds for all points $x$ in $A$, the proposition follows by \cite[Corollary 5.23]{Huy}. 
\end{proof} 
 \noindent We now prove Theorem \ref{main2} of the Introduction which relies on the following technical proposition.

\begin{prop}\label{RelPrime3}
Let $\Phi:\rd(A)\rightarrow \rd(B)$ be an equivalence of derived categories of two abelian surfaces.  
Then there exists an equivalence $\Psi:\rd(A)\rightarrow \rd(C)$ where $C\in \{B,\widehat{B}\}$ such that the vector 
$$ \Psi^{\rm CH}(0, \,0, \,1)\; := \; (r, \, l, \, \chi) $$
satisfies the following conditions:
\begin{enumerate}
\item[(i).] $r$ is positive;\\
\item[(ii).] $l$ is the class of an ample line bundle on $C$;\\
\item[(iii).] $r$ is coprime with $\chi$.
\end{enumerate}
\end{prop}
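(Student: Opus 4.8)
The plan is to start with the vector $\Phi^{\rm CH}(0,0,1) = (r_0, l_0, \chi_0) \in {\rm CH}^*(B)_{\rm num}$, which by \eqref{isometry} is isotropic for the Mukai pairing on $B$, i.e. $l_0^2 = 2 r_0 \chi_0$. I would then modify $\Phi$ by composing on the left with autoequivalences of the types described in Section \ref{autoeq}, together with possibly the Mukai equivalence $\sS_B : \rd(B) \to \rd(\widehat{B})$, so as to successively arrange (i), (ii), (iii). The key tools are the formulas: the shift acts by $-1$; tensoring by $H^{\otimes n}$ acts by \eqref{actionH}; and $\sS_B^{\rm CH}$ swaps the rank and Euler-characteristic entries. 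All these operations preserve the Mukai pairing, hence the isotropy relation, so at every stage the resulting vector $(r,l,\chi)$ satisfies $l^2 = 2r\chi$.

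The main steps, in order: \emph{Step 1 (making the rank nonzero).} If $r_0 \neq 0$ this is free. If $r_0 = 0$, then isotropy forces $l_0^2 = 0$; here I would apply $T_B(H^{\otimes n})$ for suitable $H$ and $n$, which sends $(0,l_0,\chi_0)$ to $(0,\, l_0,\, \chi_0 + n\, l_0\cdot h)$ — so if $l_0 \neq 0$ one can adjust $\chi_0$, and if moreover we still have rank $0$ we invoke $\sS_B$ to swap a nonzero $\chi$ into the rank slot, landing in $\rd(\widehat{B})$; this is the reason $C$ may be $\widehat{B}$ rather than $B$. (The genuinely degenerate subcase $(0,0,\chi_0)$ with $\chi_0 \neq 0$ is handled directly by $\sS_B$, while $(0,0,0)$ cannot occur since $\Phi^{\rm CH}$ is an isomorphism by the discussion after \eqref{isometry}.) \emph{Step 2 (making the rank positive).} Apply the shift $[1]$ if necessary to flip the sign of the whole vector; this turns $r$ into $|r| > 0$ at the cost of changing $l \mapsto -l$ and $\chi \mapsto -\chi$. \emph{Step 3 (making $l$ ample).} With $r > 0$ fixed, apply $T_C(H^{\otimes n})$ for a fixed ample $H$ on $C$ with class $h$ and $n \gg 0$: by \eqref{actionH} the new first-Chern class is $l + r n h$, and since $r > 0$ and $h$ is ample, $l + rnh$ is ample for $n$ sufficiently large (here I use that ampleness on an abelian surface is a numerical, open condition — e.g. via the Nakai–Moishezon criterion, $h^2 > 0$ and $(l+rnh)\cdot h > 0$ for $n$ large). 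Note that tensoring by $H^{\otimes n}$ does not change the rank, so $r$ stays positive and the pairing stays isotropic. \emph{Step 4 (coprimality of $r$ and $\chi$).} This is the delicate point. After Step 3 we have $(r, l, \chi)$ with $r > 0$, $l$ ample, $l^2 = 2 r \chi$. I would again use $T_C(H^{\otimes n})$: replacing $n$ by $n + 1$ changes $\chi$ by $l \cdot h + r(2n+1)h^2/2$, i.e. the successive values of $\chi$ form an arithmetic-like progression whose common differences we can control; more usefully, one checks that among the vectors obtained by letting $n$ range, $\gcd(r, \chi_n)$ takes the value $\gcd(r, \text{(something involving } l\cdot h, h^2))$, and one must show this gcd can be made $1$. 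The cleanest route: since $l$ is the class of an \emph{ample} line bundle and $l^2 = 2r\chi$, pick instead a primitive ample class — or use that we are still free to further twist by line bundles with arbitrary first Chern class in ${\rm NS}(C)$, so we can first adjust $l$ itself (via Step 3 with a different ample $h'$, or by adding a suitable class) to make $l$ primitive, and then isotropy $l^2 = 2r\chi$ combined with primitivity of $l$ forces any common prime divisor of $r$ and $\chi$ to divide $l$, a contradiction.

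I expect \emph{Step 4} to be the main obstacle, precisely because one must juggle ampleness (Step 3) and primitivity/coprimality simultaneously, and these pull in different directions: twisting to fix one can spoil the other. The resolution is to observe that the set of twists keeping $l$ ample is large (an open cone's worth of $n$), and within it one has enough freedom to also reach a primitive class; alternatively one argues that $\gcd(r,\chi)$ can be computed after twisting and reduced to $\gcd$ of $r$ with an explicitly controllable integer. Once (i)–(iii) hold, $\Psi$ is the composite of $\Phi$ with the finitely many autoequivalences (and possibly $\sS_B$) used above, which is still an equivalence, and $C \in \{B, \widehat{B}\}$ as required.
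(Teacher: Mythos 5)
Your Steps 1--3 (making the rank nonzero, then positive, then twisting by a large multiple of an ample class) follow the same outline as the paper, but your Step 4 -- the coprimality of $r$ and $\chi$, which you yourself flag as the crux -- contains a genuine gap. The ``cleanest route'' you propose is false as stated: primitivity of $l$ together with $l^2=2r\chi$ does \emph{not} force $\gcd(r,\chi)=1$. Your implicit inference is that a common prime divisor $q$ of $r$ and $\chi$ gives $q^2\mid l^2$ and hence $q\mid l$, but $q^2\mid l^2$ does not imply $q\mid l$ for the intersection form of an abelian surface. Concretely, on $C=E\times F$ with $e,f$ the classes of the two rulings, the vector $(3,\,e+9f,\,3)$ is isotropic, $e+9f$ is ample and primitive, yet $\gcd(3,3)=3$; moreover this vector even pairs to $1$ with $(0,f,0)$, so it is not excluded by any of the numerical constraints you have available. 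Thus after your Step 3 you may be left with a vector whose rank and Euler characteristic share a prime, and no argument is given that twisting can repair this while it is also not a priori clear that it can: if, say, ${\rm NS}(C)=\mathbf{Z}h_0$ with $h_0^2=2q$, every twist changes $\chi$ by a multiple of $q$ (see \eqref{actionH}), so a vector with $q\mid r$ and $q\mid\chi$ could never be repaired by twisting alone.

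The missing idea, which is the heart of the paper's proof, is to bring in the second vector $w:=\Psi^{\rm CH}(1,0,0)=(s,b,\xi)$. Since the Mukai pairing is preserved by \eqref{isometry}, $\langle v,w\rangle=\langle(0,0,1),(1,0,0)\rangle=\pm1$, i.e.\ $l\cdot b-r\xi-\chi s=\pm1$; hence no prime divisor of $r$ can divide both $\chi$ and $I:=l\cdot b$. Twisting by $B^{\otimes n}$, where $B$ is a line bundle with class $b$, changes $\chi$ to $\chi+nI+rn^2\,b^2/2$ by \eqref{actionH}, and the unimodularity relation is exactly what lets one choose $n$ (prime by prime, via congruence conditions on $n$ modulo the prime divisors of $r$) so that the new Euler characteristic is coprime to $r$; it is also what rules out the pathological situation described above. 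Note too that the paper arranges coprimality \emph{before} ampleness and then restores ampleness by twisting with $\Theta^{\otimes rd}$ for $\Theta$ ample and $d\gg0$: because the exponent is a multiple of $r$, the Euler characteristic changes only by a multiple of $r$, so coprimality survives while $l$ becomes ample. This $r$-divisible-exponent trick is the correct resolution of the tension you identified between ampleness and coprimality; your proposed resolution via primitivity does not work.
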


\begin{proof} 
 We first prove that there exists an equivalence $\Psi_1:\rd(A)\rightarrow \rd(C)$ with $C\in \{B,\widehat{B}\}$ such 
 that the first entry of the vector
  $v_1:=\Psi_1^{{\rm CH}}(0,0,1)=(r_1, \, l_1, \, \chi_1)$ is positive. 
  Set $$v_0 \; := \; \Phi^{{\rm CH}}(0,0,1) \; =\; (r_0, \, l_0, \, \chi_0).$$ 
  For $r_0>0$ there is nothing to prove. 
  For $r_0<0$ we simply set $\Psi_1:=\Phi\circ [1]$ in order to make $r_0$ positive. 
  Suppose now $r_0=0$.
  If $\chi_0\neq 0$, then it is enough to set $\Psi_1:=\sS_B \circ \Phi$ for $\chi_0>0$, and $\Psi_1:=\sS_B\circ \Phi[1]$ if $\chi_0<0$. 
  Assume now $r_0=\chi_0=0$ and let $\Phi^{\rm CH}(1,0,0):=w_0=(s_0,b_0,\xi_0)$. 
  By noting that $v(\sO_x)=(0,0,1)$ for any point $x\in B$, and 
   $v(\sO_B)=(1,0,0)$, by \eqref{isometry} we find that 
  $\langle v_0, w_0\rangle_B=l_0\cdot b_0=1$. 
 Moreover, if $B_0$ is a line bundle with class $b_0$,
  then the composition $\big(T_B(B_0)\circ \Phi\big)^{\rm CH}$ sends 
  $(0,0,1)$ to  $(0, l_0,  1) $ as showed in \eqref{actionH}. 
  So for this case we set $\Psi_1:=\sS_B\circ T_B(B_0)\circ \Phi$.
%
%

We now show that there exists an equivalence $\Psi_2:\rd(A)\rightarrow \rd(C)$ with $C\in \{B,\widehat B\}$ such that in the vector
$v_2:=\Psi_2^{\rm CH}(0,0,1)=(r_2,l_2,\chi_2)$ 
we have $r_2>0$ with $\chi_2$ coprime with $r_2$. 
Let $w_1:=\Psi^{\rm CH}_1(1,0,0)=(s_1,b_1,\xi_1)$ so that $\langle v_1, w_1\rangle=1$ and 
\begin{equation}\label{II}
I \, - \, r_1 \, \xi_1 \, -\, \chi_1 \, s_1 \; = \; 1\quad \mbox{ where }\quad I\; := \; l_1\cdot b_1.
\end{equation}
Let $B_1$ be a line bundle such that its numerical class is $b_1,$
and consider the composition $\big( T_C(B_1^{\otimes n})\circ \Psi_1\big)^{\rm CH}$ with $n\in \rz_{>0}$. Again by \eqref{actionH}
this equivalence sends
$(0,0,1)$ to 
$$v_2\; := \;(r_2, \, l_2, \, \chi_2)\; = \: \Big(r_1, \, l_1 \, +\, r_1 \, n\, b_1, \, \chi_1 \, + \, n \, I\, +\, r_1 \,n^2  \,
\frac{b_1^2}{2}\Big).$$
Therefore the first entry of $v_2$ is positive and moreover there is no divisor of $r_1$ that divides both $I$ and $\chi'$ (see \eqref{II}). 
Suppose now that there is a prime divisor $q$ of $r_1$ that divides $\chi_1$ but not $I$. 
In this case we can choose the integer $n$ to be coprime with $q$ that in turns makes $\chi_2$ coprime with $q$. Since there are infinitely many
choices for such $n$, it follows that $\chi_2$ is coprime with $r_2=r_1$.
Suppose now that there is a prime 
divisor $q$ of $r_1$ that divides $I$ but not $\chi_1$. Then it is easy to see that $\chi_2$ is again coprime with $r_2$ (in fact 
we can choose any integer $n$).
Suppose now that both $\chi_1$ and $I$ are coprime with $r_2$. We can choose $n$ so that $\chi_1+n I \neq 0$ modulo any prime divisor 
$q$ of $r_1$. With this choice of $n$, it is easy to verify that $\chi_2$ is relatively prime with $r_2$.
Therefore we set $\Psi_2:=T_C(B_1^{\otimes n})\circ \Psi_1$.

Let now $\Theta$ be an ample line bundle with class $\theta \in {\rm CH}^1(C)$ and consider the equivalence 
$\Psi_3:=T_C(\Theta^{\otimes (r_1\,d)})\circ \Psi_2$ where $d$ is a sufficiently large integer.
Then $\Psi_3^{\rm CH}$ sends $(0,0,1)$ to 
$$v_3 \; := \; \Big(r_1, \, l_1 \, + \, r_1 \, n \, b_1 \, + \, r_1^2 \, d\,  \theta, \, \chi_1 \, + \, n \, I\, 
+\, r_1 \,n^2 \, \frac{b_1^2}{2} \, +\, r_1 \,d \,\big(\theta \cdot (l_1 \, + \, r_1 \,n \,b_1) \big) \, +\, r_1^3 \, d^2 \, 
\frac{\theta^2}{2}\Big).$$
We notice that, for $d$ large enough, the second component of $v_3$ is an ample class, 
and that the third component of $v_3$ is congruent to $\chi_2$ 
modulo $r_1$. Therefore it is itself coprime with $r_1$. In conclusion, the equivalence we are looking for is simply $\Psi_3$.
\end{proof} 
\begin{theorem}\label{moduli}
Let $A$ be an abelian surface and let $\Phi:\rd(A)\rightarrow \rd(Y)$ 
be an equivalence of derived categories of smooth projective varieties. 
Then $Y$ is an abelian surface. Moreover, $A$ is isomorphic to a moduli space of Gieseker-stable 
sheaves on either $Y$, or its dual $\widehat{Y}$, according to whether the rank component of  
$\Phi^{\rm CH}(0,0,1)$ is non-zero, or zero respectively. 

\end{theorem}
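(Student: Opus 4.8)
The plan is to first establish that $Y$ must itself be an abelian surface, and then produce a filtered equivalence by composing $\Phi$ with a suitable autoequivalence furnished by Proposition \ref{RelPrime3}, so that Proposition \ref{filteredequivalence} identifies $A$ with a moduli space. To see that $Y$ is an abelian surface, I would argue that $A$ has trivial canonical bundle and that derived equivalence preserves the dimension, the Kodaira dimension, and triviality of $\omega$ (since $\Phi$ commutes with the Serre functor, which on a smooth projective variety with trivial canonical bundle is just a shift). Thus $Y$ is a smooth projective surface with $\omega_Y \simeq \sO_Y$; by the classification of such surfaces the candidates are abelian, $K3$, bielliptic, or (in positive characteristic) quasi-hyperelliptic surfaces. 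One then rules out the non-abelian cases by comparing Hochschild cohomology or, more elementarily, by using that $\Phi$ induces an isometry of the numerical Chow groups together with the fact that $A$ carries $2$-dimensional spaces of global $1$- and $2$-forms; since $K3$ and bielliptic surfaces have $h^{1,0}=0$, while the Hodge numbers are a derived invariant in this setting, $Y$ must be abelian.

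Next, apply Proposition \ref{RelPrime3} to obtain an equivalence $\Psi:\rd(A)\rightarrow \rd(C)$, with $C\in\{Y,\widehat Y\}$, such that the Mukai vector $v:=\Psi^{\rm CH}(0,0,1)=(r,l,\chi)$ has $r>0$, $l$ the class of an ample line bundle on $C$, and $\gcd(r,\chi)=1$. Because $\Phi$ (hence $\Psi$) is an equivalence, \eqref{isometry} gives $\langle v,v\rangle_C = \langle (0,0,1),(0,0,1)\rangle_A = 0$. Now Theorem \ref{modulispace}, applied on $C$ with the ample class $h:=l$, shows that every Gieseker-semistable sheaf with Mukai vector $v$ is in fact stable, that $\sM_h(v)$ is a smooth projective surface with trivial canonical bundle admitting a universal family $\sU$ on $\sM_h(v)\times C$, and that $\Phi_{\sU}:\rd(\sM_h(v))\rightarrow \rd(C)$ is an equivalence.

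Finally, form the composition
$$
\Xi\;:=\;\Phi_{\sU}^{-1}\circ \Psi \;:\;\rd(A)\longrightarrow \rd(\sM_h(v)).
$$
I claim $\Xi$ is filtered: since $\sU$ is the universal family, for a general point $[E]\in \sM_h(v)$ the object $\Phi_{\sU}(\sO_{[E]})$ is the stable sheaf $E$, which has Mukai vector $v$, so $\Phi_{\sU}^{\rm CH}(0,0,1)=v=\Psi^{\rm CH}(0,0,1)$, whence $\Xi^{\rm CH}(0,0,1)=(0,0,1)$. By Proposition \ref{filteredequivalence} the filtered equivalence $\Xi$ is induced by an isomorphism $A\xrightarrow{\sim}\sM_h(v)$ twisted by a line bundle; in particular $A\simeq \sM_h(v)$, a moduli space of Gieseker-stable sheaves on $C$. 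Tracing through the construction of $\Psi$ in Proposition \ref{RelPrime3}, one has $C=Y$ precisely when the rank component $r_0$ of $\Phi^{\rm CH}(0,0,1)$ is nonzero (the only step that switches to $\widehat Y$ is the application of $\sS_Y$, which is invoked exactly when $r_0=0$), and $C=\widehat Y$ when $r_0=0$; this yields the dichotomy in the statement. The main obstacle is the first paragraph: pinning down that $Y$ is abelian rather than merely a surface with trivial canonical class requires care in positive characteristic, where quasi-hyperelliptic surfaces appear and one must be sure the relevant invariant (Hodge numbers, or the structure of the derived-invariant Hochschild homology) genuinely distinguishes abelian surfaces from the other Calabi--Yau-type surfaces and behaves well under Fourier--Mukai equivalence.
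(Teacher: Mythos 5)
Your second and third paragraphs reproduce the paper's argument essentially verbatim: Proposition \ref{RelPrime3} produces $\Psi:\rd(A)\to\rd(C)$, $C\in\{Y,\widehat Y\}$, with $v=\Psi^{\rm CH}(0,0,1)=(r,l,\chi)$ satisfying $r>0$, $l$ ample and $(r,\chi)=1$; the isometry \eqref{isometry} gives $\langle v,v\rangle_C=0$; Theorem \ref{modulispace} then yields the universal family $\sU$ and the equivalence $\Phi_{\sU}$ with $\Phi_{\sU}^{\rm CH}(0,0,1)=v$; and the composition $\Phi_{\sU}^{-1}\circ\Psi$ is filtered, so Proposition \ref{filteredequivalence} gives $A\simeq\sM_l(v)$, with the $Y$ versus $\widehat Y$ dichotomy read off from which autoequivalences were used in Proposition \ref{RelPrime3} (the Poincar\'e functor $\sS_Y$ enters exactly when the rank of $\Phi^{\rm CH}(0,0,1)$ vanishes). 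That part is correct and is the same route as the paper.

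The genuine gap is the first step, which you flag but do not close: showing that $Y$ is an abelian surface and not merely a smooth surface with trivial canonical bundle. Your proposed argument rests on Hodge numbers (or an HKR-type decomposition of Hochschild homology) being derived invariants, which is not available at the stated level of generality: the theorem is for every $p>0$, Hodge symmetry can fail, and the HKR decomposition needs hypotheses on $p$; moreover your claim that bielliptic surfaces have $h^{1,0}=0$ is false (they have $q=1$), and in characteristic $2,3$ there are (quasi-)hyperelliptic and non-classical Enriques surfaces whose canonical bundle is genuinely trivial, so the classification really must be cut down by an invariant that provably transfers under $\Phi$. The paper does this with $\ell$-adic cohomology: by \cite[Lemma 3.1]{Ho1} the equivalence induces an isomorphism $H^1_{\acute{e}t}(Y,\mathbf{Q}_l)\oplus H^3_{\acute{e}t}(Y,\mathbf{Q}_l)(2)\simeq H^1_{\acute{e}t}(A,\mathbf{Q}_l)\oplus H^3_{\acute{e}t}(A,\mathbf{Q}_l)(2)$, whence $b_1(Y)=b_1(A)=4$, and a smooth surface in positive characteristic with trivial canonical bundle and $b_1=4$ is abelian by the classification (\cite[Section 7]{Li}). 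Some such \'etale (or crystalline) input is needed; as written, your first paragraph does not go through.
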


\begin{proof}
By general theory $Y$ is a smooth surface with trivial canonical bundle. Let $l\neq p$ be a prime and consider the $l$-adic cohomology groups $H^i_{\acute{e}t} (Y,\mathbf{Q}_l)$. By \cite[Lemma 3.1]{Ho1} the equivalence $\Phi$ induces an isomorphism
 $$H^1_{\acute{e}t} (Y,\mathbf{Q}_l) \; \oplus \; H^3_{\acute{e}t} (Y,\mathbf{Q}_l)(2) \; \simeq \; H^1_{\acute{e}t} (A,\mathbf{Q}_l)
 \; \oplus\;  H^3_{\acute{e}t} (A,\mathbf{Q}_l)(2)$$
 which leads to the equality of Betti numbers:
 $$b_1(Y) \;= \;b_1(A) \; = \; 4.$$
In positive characteristic the fact that a smooth surface has trivial canonical bundle and $b_1=4$ 
imply that $Y$ is isomorphic to an abelian surface (see \cite[Section 7]{Li}).
 
 By Proposition \ref{RelPrime3} there exists an equivalence $\Psi:\rd(A)\rightarrow \rd(C)$ with $C\in \{Y,\widehat{Y}\}$ such that 
 in the vector 
 $$v\; := \;  \Psi^{\rm CH}(0,0,1) \; = \; (r, \, l, \, \chi)$$ the rank component $r$ is positive, the class 
 $l$ is ample, and $\chi$ 
 is coprime with $r$. 
Moreover, by looking at the proof of Proposition \ref{RelPrime3}, it is the case that $C=Y$ if the rank component of  
$\Phi^{\rm CH}(0,0,1)$ is non-zero, and $C=\widehat{Y}$ otherwise. Now let $L$ be a line bundle on $C$ whose class is $l$ and 
set $v_l=(r,l,\chi)\in {\rm CH}^*(C)$.
 Then by Theorem \ref{modulispace} there is a universal family $\sU$ on
  $\sM_l(v_l)\times C$ inducing an equivalence 
 $$\Phi_{\sU}\,: \,\rd\big(\sM_l(v_l) \big)\rightarrow \rd(C)$$ of derived categories satisfying
 $\Phi^{\rm CH}_{\sU}(0,0,1) =  v.$
As the composition $\big(\Phi_{\sU}^{-1}\circ \Psi\big)^{\rm CH}$ sends 
$(0,0,1)$ to $(0,0,1)$, by Proposition \ref{filteredequivalence} we get $A\simeq \sM_l(v_l)$. 
 
\end{proof}

\section{FM partners of canonical covers of hyperelliptic surfaces}\label{FMhyp}
We denote the set of Fourier--Mukai partners of a smooth projective variety $X$ by 
$${\rm FM}(X) \; := \; \{\, Y \; | \; Y\mbox{ is a smooth projective variety with \,} \rd(Y)\simeq \rd(X)\}_{/\simeq}.$$
We say that ${\rm FM}(X)$ is trivial if ${\rm FM}(X)=\{X\}$.
In the case of an abelian variety $A$, we say that its set of Fourier--Mukai partners 
is trivial if ${\rm FM}(A)\subset \{A,\widehat{A}\}$.

A \emph{hyperelliptic surface} over an algebraically closed field $k$ of positive characteristic $p>3$
is a smooth projective minimal surface $X$ with $K_X\equiv 0$, $b_2(X)=2$, and such that each fiber of the Albanese map is a 
smooth elliptic curve (\emph{cf}. \cite[\S 10]{Ba}). These surfaces can be described as quotients $(E\times F)/G$ 
of two elliptic curves $E$ and $F$ by a finite group $G$. The group $G$ acts on $E$ by translations, and on $F$ in a way such that $F/G\simeq \rp^1$. 
Moreover, there are only a finite number of possibilities for the action of $G$ on $E\times F$, which have been classified by
Bagnera--De Franchis \cite[10.27]{Ba}. 

By \cite[\S 9.3]{Ba} the order $n$ of the canonical bundle of $X$ is finite with $n=2,3,4,6$.
Therefore we can consider the \emph{canonical cover} $\pi:\widetilde{X}\rightarrow X$ of the surface $X$ 
which is the \'{e}tale cyclic cover associated to the canonical bundle $\omega_X$. 
The degree of $\pi$ is the order $n$ of $\omega_X$, and in addition $\pi$ comes equipped with an action of the cyclic group 
that realizes $X$ as the quotient $\widetilde{X}/(\rz/n\rz)$.
By looking at the Bagnera--De Franchis' list \cite[10.27]{Ba}, the canonical cover $\widetilde{X}$ 
of an arbitrary hyperelliptic surface $X=(E\times F)/G$ is an abelian surface that sits inside a tower of surfaces
$$E\times F \; \stackrel{\pi'}{\longrightarrow} \; \widetilde{X}\; \stackrel{\pi}{\longrightarrow}\; X,$$ where 
$\pi'$ is an \'{e}tale cyclic cover of degree one, two, or three. Moreover, if $\pi'$ has degree three, then $F$ admits an automorphism of groups 
of order three and has $j$-invariant equals to zero.
Therefore the dual morphism $\widehat{\pi'}$ realizes the dual of $\widetilde{X}$ either as 
the product $E\times F$, or as a degree two \'{e}tale cyclic cover of $E\times F$, or as 
a degree three \'{e}tale cyclic cover 
of $E\times F$ such that $F$ has an automorphism of groups of order three. 

\subsection{The work of Sosna}
In \cite[Theorem 1.1]{So} the author proves that the set of Fourier--Mukai partners of the canonical cover of a complex hyperelliptic 
surface is trivial. By using Bagnera--De Franchis' classification, 
Sosna's theorem boils down to proving the following result concerning derived equivalences of special abelian surfaces.

\begin{theorem}[Sosna]\label{MTgCori}
 Let $E$ and $F$ be complex elliptic curves and $A$ be a complex abelian surface.
 Then ${\rm FM}(E\times F)$ is trivial. Moreover, if $E\times F\rightarrow A$ is a degree two \'{e}tale cyclic cover, then
 ${\rm FM}(A)$ is trivial. Finally, the same conclusion holds if $E\times F\rightarrow A$ is a degree three \'{e}tale cyclic cover and ${\rm rk \, NS}(A) \in  \{2,4\}$.
\end{theorem}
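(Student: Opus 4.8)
The plan is to combine two inputs. The first, which reduces everything to abelian surfaces and their moduli of sheaves, is Theorem \ref{moduli} (extending \cite[Theorem 5.1]{BM}): if $B\in{\rm FM}(A)$ then $B$ is isomorphic to a two--dimensional fine moduli space $\sM_C(v)$ of Gieseker--stable sheaves, where $C\in\{A,\widehat A\}$ and $v$ is a primitive Mukai vector with $\langle v,v\rangle_C=0$. The second is Orlov's Hodge--theoretic description of derived equivalences of complex abelian surfaces: $\rd(A)\simeq\rd(B)$ if and only if there is a Hodge isometry of Mukai lattices $\widetilde H(A,\rz)\simeq\widetilde H(B,\rz)$, equivalently a symplectic isomorphism $A\times\widehat A\simeq B\times\widehat B$. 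Granting these, each of the three assertions becomes a concrete lattice problem: enumerate, up to isomorphism, the abelian surfaces admitting such an isometry with the given one, which one does by inspecting the transcendental Hodge structure, a lattice of rank $6-{\rm rk}\,{\rm NS}$.

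For $A=E\times F$ one first observes $\widehat{E\times F}\simeq E\times F$ (elliptic curves are self--dual), so ``trivial'' here means ``$=\{E\times F\}$''. The Mukai lattice $\widetilde H(E\times F,\rz)$ splits off an orthogonal, entirely algebraic summand $U\oplus U$ -- the hyperbolic plane $H^0\oplus H^4$ together with the one spanned by $[{\rm pt}\times F]$ and $[E\times{\rm pt}]$ in $H^2$ -- and its complement $H^1(E)\otimes H^1(F)$ carries the transcendental Hodge structure. That lattice has rank $4$ if $E\not\sim F$ (so ${\rm rk}\,{\rm NS}=2$), rank $3$ if $E\sim F$ has no complex multiplication, and rank $2$ if $E\simeq F$ has complex multiplication. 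In the first two cases one uses that the relevant indefinite lattices of rank $\ge 3$ have one--class genera; in the last, that the transcendental structure is of CM type and the count is governed by the class number of the relevant quadratic order. In every case the only abelian surface obtained is $E\times F$. (Alternatively, keeping Theorem \ref{moduli} in hand, one shows directly that every $\sM_{E\times F}(v)$ with $v$ primitive isotropic is again a product of two elliptic curves isogenous to $E$ and to $F$ -- via the two elliptic fibrations and the description of ${\rm Pic}(E\times F)$ in \S\ref{mumlin} -- and that the isogenies are forced to be isomorphisms; this reduces to the same numerical facts.)

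For the degree two, respectively degree three, covers $E\times F\to A$, the surface $A$ is not a product, but dualizing yields an \'{e}tale cyclic cover $\widehat A\to E\times F$ of the same degree; in the degree three case $F$ carries an automorphism of order three, so $j(F)=0$. One identifies $\widetilde H(A,\rz)$ with the sublattice of $\widetilde H(E\times F,\rz)\otimes\mathbf{Q}$ cut out by $\ker(E\times F\to A)$ and runs the same analysis for the lattices that actually occur -- with ${\rm rk}\,{\rm NS}(A)\in\{2,4\}$ in the degree three case -- to conclude that the only abelian surfaces admitting the required Hodge isometry are $A$ and $\widehat A$. The hypothesis ${\rm rk}\,{\rm NS}(A)\in\{2,4\}$ is exactly what makes the lattice argument close up, and it is the only case that arises from Bagnera--De Franchis' list \cite[10.27]{Ba}: rank $3$ would force $E\sim F$ without complex multiplication, which is impossible when $j(F)=0$.

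The step I expect to be the main obstacle is the lattice bookkeeping underlying the last two paragraphs: verifying the one--class property in ranks $3$ and $4$, carrying out the CM class--number computation in rank $2$, and -- the most delicate point -- tracking the ``$A$ versus $\widehat A$'' ambiguity together with the auxiliary data (the distinguished subgroup, the order three automorphism) through the chain of Hodge isometries. This is precisely the work carried out in \cite[Theorem 1.1]{So}, so in practice one cites that result; the three assertions then follow by matching the abelian surfaces above with the canonical covers appearing in \cite[10.27]{Ba}.
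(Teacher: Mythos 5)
The paper gives no proof of this statement at all: it is quoted verbatim, with attribution, as Sosna's theorem \cite[Theorem 1.1]{So}, i.e.\ exactly the characteristic-zero input that the rest of the paper sets out to extend to positive characteristic. Your sketch is consistent with Sosna's Hodge- and lattice-theoretic argument (over $\mathbf{C}$ the moduli-space input should be \cite[Theorem 5.1]{BM} rather than the positive-characteristic Theorem~\ref{moduli}, as you note), and since you ultimately defer the lattice bookkeeping to the citation of \cite{So}, you end up in essentially the same position as the paper.
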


\noindent In view of Theorem \ref{etalecover} we prefer to work with \'{e}tale covers instead of quotients. Thus we reformulate Sosna's theorem in 
the following version.

\begin{prop}\label{MTgC}
 Let $E$ and $F$ be two complex elliptic curves. 
 Then ${\rm FM}(E\times F)$ is trivial.
  Moreover, if $A$ is a degree two \'{e}tale cyclic cover of $E\times F$, then ${\rm FM}(A)$ is trivial as well. 
  Finally, the same conclusion holds if $A$ is a degree three \'{e}tale cyclic cover of $E\times F$ and
  ${\rm rk \, NS}(A) \in  \{2,4\}$. 
\end{prop}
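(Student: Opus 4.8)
The plan is to translate Sosna's Theorem \ref{MTgCori} into the language of \'etale covers. The key observation is that the category of \'etale cyclic covers of a given degree $n$ over a complex variety $X$ (equipped with their deck group action) is equivalent to the category of surjections $\pi_1(X)^{\mathrm{ab}} \twoheadrightarrow \rz/n\rz$, equivalently to $n$-torsion line bundles on $X$, and in particular every \'etale cyclic cover $A \to E\times F$ of degree $n$ realizes $E \times F$ as the quotient $A/(\rz/n\rz)$ for the induced free action of $\rz/n\rz$ on $A$. So I would first record this dictionary: giving a degree-$n$ \'etale cyclic cover $A \to E\times F$ is the same as giving a fixed-point-free $\rz/n\rz$-action on an abelian surface $A$ with $(E\times F) = A/(\rz/n\rz)$, and the cover is recovered as the canonical cover associated to the torsion line bundle $\omega$ on $E\times F$ defining it. Hence the three cases of Proposition \ref{MTgC} ($A = E\times F$, $A$ a degree-two cover, $A$ a degree-three cover with $\mathrm{rk\,NS}(A)\in\{2,4\}$) correspond exactly to the three cases of Theorem \ref{MTgCori} ($E\times F$ itself, $E\times F$ mapping onto $A$ by a degree-two quotient, $E\times F$ mapping onto $A$ by a degree-three quotient with $\mathrm{rk\,NS}(A)\in\{2,4\}$), just reading the maps in the opposite direction.

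Concretely, the first case is identical in both statements. For the second case, Theorem \ref{MTgCori} gives the triviality of $\mathrm{FM}(A)$ when $A$ receives a degree-two \'etale cyclic cover $E\times F \to A$; but a degree-two cover $A' \to E\times F$ in Proposition \ref{MTgC} dualizes to a degree-two isogeny, and since an \'etale double cover of an abelian surface is again the canonical cover picture, one checks that $A'$ carries a free $\rz/2\rz$-action with quotient $E\times F$, i.e. $E\times F = A'/(\rz/2\rz)$ is a degree-two \'etale cyclic quotient of $A'$ — which is precisely the hypothesis under which Sosna proves $\mathrm{FM}(A')$ trivial (the roles of "cover of $E\times F$" and "covered by $E\times F$" being symmetric for degree-two \'etale cyclic maps, as both are given by $2$-torsion line bundles and their associated $(\rz/2\rz)$-quotients). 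For the third case, one argues the same way: a degree-three \'etale cyclic cover $A'\to E\times F$ with $\mathrm{rk\,NS}(A')\in\{2,4\}$ produces, via the associated free $\rz/3\rz$-action, a presentation $E\times F = A'/(\rz/3\rz)$ as a degree-three \'etale cyclic quotient, and the Néron--Severi rank hypothesis on $A'$ matches Sosna's hypothesis on the base surface after one tracks how $\mathrm{NS}$ behaves under the isogeny (pullback along a degree-three isogeny is injective on $\mathrm{NS}$ up to torsion, so the ranks of $\mathrm{NS}(A')$ and of Sosna's surface agree).

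The main obstacle — really the only nontrivial point — is the careful bookkeeping in the degree-three case: I must verify that the surface $A$ appearing in Proposition \ref{MTgC} as a degree-three \'etale cover of $E\times F$ satisfies exactly the condition "there is a degree-three \'etale cyclic cover $E\times F' \to A$" (for a possibly different elliptic-curve-product-like presentation) that makes Sosna's Theorem \ref{MTgCori} applicable, and that the constraint $\mathrm{rk\,NS}\in\{2,4\}$ is the one that transfers correctly. Concretely, one uses that $A$ and $E\times F$ are isogenous of degree three via two mutually inverse (in the sense of Proposition \ref{Neron}) isogenies $\pi'$ and $\widehat{\pi'}$; the composite $\widehat{\pi'}\circ\pi' = 3_A$, so $\widehat{\pi'}$ is itself a degree-three \'etale isogeny, and since its kernel is cyclic of order three (being dual to the cyclic kernel of $\pi'$) it exhibits $A$ as receiving a degree-three \'etale cyclic cover from another abelian surface which one identifies, via the Bagnera--De Franchis constraints already recorded in Section \ref{FMhyp} (namely that one of the elliptic factors has $j$-invariant zero and an order-three automorphism), with a product of elliptic curves. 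This closes the loop, and with all three cases reduced to Theorem \ref{MTgCori} the proposition follows. $\qed$
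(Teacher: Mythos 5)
Your reduction to Theorem \ref{MTgCori} has a direction error, and this is a genuine gap. Sosna's statement, as recorded in Theorem \ref{MTgCori}, assumes a cover $E\times F\to A$, i.e.\ that $A$ is a cyclic \emph{quotient} of a product of elliptic curves; in Proposition \ref{MTgC} the surface $A$ is instead a cyclic \emph{cover} of the product. Your bridge between the two is the assertion that these roles are ``symmetric'' because both are encoded by torsion line bundles, but a cyclic cover $A\to E\times F$ is classified by a torsion line bundle on the base $E\times F$, whereas the hypothesis you need is a torsion line bundle on some product of elliptic curves whose associated quotient is $A$ itself; one does not follow from the other, and the observation that $E\times F=A/(\rz/n\rz)$, while trivially true, is not Sosna's hypothesis. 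The degree-three bookkeeping is also off: if $\pi'\colon A\to E\times F$ has degree $3$, the complementary isogeny $\mu$ with $\mu\circ\pi'=3_A$ has degree $81/3=27$, not $3$; the dual isogeny $\widehat{\pi'}$ does have degree $3$ and cyclic kernel, but its target is $\widehat{A}$, not $A$, so what it exhibits is a degree-three cyclic cover $\widehat{E}\times\widehat{F}\to\widehat{A}$, not a presentation of $A$ as receiving such a cover.

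The missing idea --- which is essentially the whole of the paper's short proof --- is to pass to the dual surface. Dualizing $\nu\colon A\to E\times F$ gives a cyclic \'etale cover $E\times F\simeq\widehat{E}\times\widehat{F}\to\widehat{A}$ of the same degree (over $\rc$ the kernel dualizes to a cyclic group of the same order), so Theorem \ref{MTgCori} applies to $\widehat{A}$ and gives that ${\rm FM}(\widehat{A})$ is trivial. Since $A$ and $\widehat{A}$ are derived equivalent, ${\rm FM}(A)={\rm FM}(\widehat{A})$, and triviality for $\widehat{A}$ (containment in $\{\widehat{A},A\}$) is the same condition as triviality for $A$; finally ${\rm rk\, NS}(A)={\rm rk\, NS}(\widehat{A})$ transfers the rank hypothesis in the degree-three case. (One can in fact show that $A$ itself is a cyclic quotient of a possibly different product of elliptic curves, but that requires an actual argument --- e.g.\ splitting the torsion point generating the kernel into its components and quotienting the factors --- which your symmetry claim does not supply; in any case the duality route above makes it unnecessary.)
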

\begin{proof}
 If $A\rightarrow E\times F$ is a cover of degree one, two, or three, then the dual isogeny $E\times F\rightarrow \widehat A$ realizes 
 $\widehat{A}$ as a quotient 
 of two elliptic curves. Then by Theorem \ref{MTgCori} we conclude that ${\rm FM}(\widehat{A})$ is trivial. As ${\rm FM}(A)={\rm FM}(\widehat{A})$ and 
 ${\rm rk\, NS}(A)={\rm rk \, NS}(\widehat A)$, the proposition follows at once.
\end{proof}

\noindent As an application of Proposition \ref{MTgC}, we deduce some further finitiness results that 
will be useful towards the proof of Theorem \ref{main1}.

\begin{prop}\label{MTgC2}
If $\varphi \, : \, A\rightarrow E\times F$ is an isogeny 
 with ${\rm deg} \, \varphi=8$ and ${\rm exp} \, \varphi=2$, then 
 ${\rm FM}(A)$ is trivial. The same conclusion holds if ${\rm rk\, NS}(A) \in  \{2,4\}$, 
 ${\rm deg}\, \varphi=27$ and ${\rm exp}\,\varphi = 3$.
\end{prop}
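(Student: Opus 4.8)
The plan is to reduce to Proposition \ref{MTgC} by replacing $\varphi$ with its dual isogeny, thereby trading the large degree ($8$, resp.\ $27$) for the small degree ($2$, resp.\ $3$) at which Proposition \ref{MTgC} applies. Write $e:={\rm exp}\,\varphi\in\{2,3\}$, so that ${\rm deg}\,\varphi=e^{3}$ in both cases. First I would record that $\varphi$ is separable: a non-separable isogeny has degree divisible by ${\rm char}(k)$ (\emph{cf}.\ \cite[Corollary 2.12]{Si}), and ${\rm char}(k)$ divides neither $8$ nor $27$. Hence \cite[Proposition 1.2.6]{BL} yields an isogeny $\psi:E\times F\rightarrow A$ of exponent $e$ with $\psi\circ\varphi=e_A$ and $\varphi\circ\psi=e_{E\times F}$. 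Taking degrees in the last equality and using ${\rm deg}(e_{E\times F})=e^{2\dim(E\times F)}=e^{4}$, I obtain ${\rm deg}\,\psi=e^{4}/{\rm deg}\,\varphi=e$; thus $\psi$ has degree $2$ in the first case and degree $3$ in the second.

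Next I would observe that, $e$ being prime, the kernel of the separable isogeny $\psi$ is cyclic of order $e$, so $\psi$ realizes $A$ as an \'{e}tale cyclic quotient of $E\times F$ of degree $e$. Dualizing, $\widehat\psi:\widehat A\rightarrow\widehat E\times\widehat F$ is again an isogeny of degree $e$ with cyclic kernel, and its target $\widehat E\times\widehat F$ is a product of two elliptic curves; thus $\widehat A$ is a degree-$e$ \'{e}tale cyclic cover of $\widehat E\times\widehat F$. If $e=2$, Proposition \ref{MTgC} gives directly that ${\rm FM}(\widehat A)$ is trivial. If $e=3$, the same proposition applies once we know ${\rm rk\,NS}(\widehat A)\in\{2,4\}$, which holds because $A$ and $\widehat A$ are isogenous, so ${\rm rk\,NS}(\widehat A)={\rm rk\,NS}(A)\in\{2,4\}$ by hypothesis.

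It remains to transfer the conclusion from $\widehat A$ back to $A$, exactly as at the end of the proof of Proposition \ref{MTgC}: the Mukai equivalence $\sS_A:\rd(A)\rightarrow\rd(\widehat A)$ shows ${\rm FM}(A)={\rm FM}(\widehat A)$, and since $\widehat{\widehat A}\simeq A$ we have $\{\widehat A,\widehat{\widehat A}\}=\{A,\widehat A\}$; hence ${\rm FM}(\widehat A)\subset\{\widehat A,A\}$ forces ${\rm FM}(A)\subset\{A,\widehat A\}$, that is, ${\rm FM}(A)$ is trivial.

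I do not anticipate a genuine obstacle: the argument is just the dual-isogeny trick combined with Proposition \ref{MTgC}. The only steps requiring care are the existence of $\psi$ (hence the need to check separability of $\varphi$) together with the degree bookkeeping that pins down ${\rm deg}\,\psi=e$, and the remark that passing to the dual preserves the condition ${\rm rk\,NS}\in\{2,4\}$ used in the degree-three case; the cyclicity of the covers involved is automatic since $e$ is prime.
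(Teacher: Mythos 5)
Your argument is correct and is essentially the paper's own proof: pass to $\psi$ with $\psi\circ\varphi=e_A$, deduce ${\rm deg}\,\psi=e$ from the degree count, dualize to exhibit $\widehat A$ as a degree-$e$ \'etale cyclic cover of a product of elliptic curves, apply Proposition \ref{MTgC}, and conclude via ${\rm FM}(A)={\rm FM}(\widehat A)$ and ${\rm rk\,NS}(A)={\rm rk\,NS}(\widehat A)$. The extra remarks on separability and on cyclicity of the dual kernel are harmless elaborations of what the paper leaves implicit.
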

\begin{proof}
 We show that the dual abelian variety $\widehat{A}$ satisfies the hypotheses of Proposition \ref{MTgC}. 
 The result will follows as ${\rm FM}(A)={\rm FM}(\widehat{A})$. Let $q$ be either $2$ or $3$
  and consider an isogeny $\psi:E\times F\rightarrow A$ of exponent $q$ such that $\psi\circ \varphi =  q_A$. 
  As ${\rm deg} \, q_A=q^4$ and ${\rm deg}\, \varphi=q^3$, we deduce that ${\rm deg}\,\psi=q$. Hence the dual isogeny 
  $\widehat{\psi}$ is a cyclic cover of $E\times F$ of order $q$. The second statement follows as
  ${\rm rk\, NS}(A) =  {\rm rk\, NS}(\widehat{A})$. 
 \end{proof}

\subsection{Strategy of the proof of Theorem \ref{main1}}

Since an abelian surface and its dual have the same Fourier--Mukai partners, the following theorem in particular proves Theorem \ref{main1}. 

\begin{theorem}\label{MTg}
Let $E$ and $F$ be elliptic curves over an algebraically closed field of characteristic $p>0$. Then 
${\rm FM}(E\times F)$ is trivial. Moreover, if $A$
is a degree two \'{e}tale cyclic cover over $E\times F$ and $p>2$, then ${\rm FM}(A)$ is trivial as well.
Finally, if $F$ admits an automorphism of groups of order three, $A$ is 
a degree three \'{e}tale cyclic cover over $E\times F$ and $p>3$, then ${\rm FM}(A)$ is again trivial.
\end{theorem}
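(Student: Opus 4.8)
The plan is to reduce the positive-characteristic statement to Sosna's complex-analytic result (Proposition~\ref{MTgC}, equivalently Proposition~\ref{MTgC2}) by lifting everything in sight to the Witt ring and applying the specialization/deformation machinery already assembled in the paper. Concretely, let $A$ be one of the three surfaces in the statement and let $B\in{\rm FM}(A)$, so there is an equivalence $\Phi:\rd(B)\to\rd(A)$. By Theorem~\ref{moduli}, $B$ is an abelian surface, and in fact $A$ is isomorphic to a moduli space $\sM_h(v)$ of Gieseker-stable sheaves on either $A$ or $\widehat A$; since ${\rm FM}(A)={\rm FM}(\widehat A)$ I may replace $A$ by $\widehat A$ if necessary, and after applying Proposition~\ref{RelPrime3} I can arrange that the Mukai vector $v=(r,l,\chi)$ has $r>0$, $l$ ample, and $(r,\chi)=1$. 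The first job is then to produce a good projective lift $\sA\to W$ of $A$ (over a possibly ramified extension of the Witt ring) together with a lift $\widetilde h$ of the polarization and a lift $\widetilde l$ of the class $l$, so that the relative moduli space construction of Section~\ref{modulispacech} applies: then $\sM_{\sA/W}(\widetilde v)\to W$ is a projective lift of $A\cong\sM_h(v)$, and by the fully-faithfulness/strong-simplicity arguments of Theorem~\ref{modulispace} the universal family gives a relative Fourier--Mukai equivalence whose geometric generic fiber exhibits $A_\eta$ as derived equivalent to the geometric generic fiber of $\sM_{\sA/W}(\widetilde v)$.

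The point of all this is that the lift $\sA$ must be chosen so that its geometric generic fiber still falls under the hypotheses of Proposition~\ref{MTgC}. Here I would use the tower $E\times F\to\widehat A$ (of degree $1$, $2$, or $3$) coming from Bagnera--De Franchis. Using the lifting results of Section~\ref{autoeq}... more precisely of the ``Lifting results'' subsection: lift $E$ and $F$ individually (Grothendieck existence for curves, or the canonical lift Theorem~\ref{canonicallift} in the ordinary case; Deuring's theorem via \cite{Oo} in the supersingular case, which in the degree-three situation also lets me lift the order-three automorphism of $F$), lift the \'etale cover $E\times F\to \widehat A$ by Theorem~\ref{etalecover}, and lift the relevant line bundles using Proposition~\ref{Picardprod} together with the surjectivity of ${\rm Pic}(\sA)\to{\rm Pic}(A)$ in the canonical-lift case. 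In the supersingular degree-three case I additionally need to lift a principal polarization along with the automorphism; this is where one invokes that supersingular elliptic curves have good lifting properties and that the graph of supersingular isogenies is connected (Proposition~\ref{koh}, \cite[Corollary~78]{Kohe}), allowing me to move a chosen lift of $F$-with-automorphism to one carrying the needed polarization via a separable $l$-power isogeny (and Proposition~\ref{Neron} to control its effect on N\'eron--Severi classes). I would similarly lift the partner $B$: since $B\cong\sM_h(v)$, I lift it \emph{as} the relative moduli space just constructed, rather than by hand.

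Once both $\sA\to W$ and the lift $\sB:=\sM_{\sA/W}(\widetilde v)\to W$ are in place, restrict to the geometric generic fiber. The relative universal family gives an equivalence $\rd(\sB_\eta)\simeq\rd(\sA_\eta)$ over a field of characteristic zero, and $\sA_\eta$, being the geometric generic fiber of a lift of $E\times F$ (respectively of a degree-two or degree-three \'etale cyclic cover of it, with the order-three automorphism surviving in the last case, and with ${\rm rk\,NS}$ controlled as in Proposition~\ref{MTgC2}), satisfies the hypotheses of Proposition~\ref{MTgC}. Hence ${\rm FM}(\sA_\eta)$ is trivial, so $\sB_\eta\cong\sA_\eta$ (or $\cong\widehat{\sA_\eta}$). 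Finally I specialize this isomorphism of generic fibers back down to the closed fibers: an isomorphism of abelian schemes over the henselian (indeed complete) base $W$ extending on the generic fiber descends/extends to an isomorphism $\sB\cong\sA$ of $W$-schemes by properness and the N\'eron mapping property, and restricting to closed fibers yields $B\cong A$ (or $B\cong\widehat A$), which is exactly triviality of ${\rm FM}(A)$ in the sense defined above.

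The main obstacle, and the step deserving the most care, is the \emph{choice} of lifts so that all the structure descends compatibly: I need the lift of $A$, the lift of its polarization, the lift of the \'etale tower to $E\times F$, and the lift of the derived-partner $B$ to all be defined over one common (possibly ramified) extension of $W$ and to be mutually compatible, and in the degree-three supersingular case I additionally must lift an elliptic curve together with both an order-three automorphism and a principal polarization. This is precisely the ``lifts should be carefully chosen'' difficulty flagged in the introduction; the rank, ampleness, and coprimality normalization from Proposition~\ref{RelPrime3} is what makes the relative moduli space a well-behaved (fine, smooth, flat) lift, and Propositions~\ref{Neron}, \ref{koh}, \ref{Picardprod2}, \ref{Picardprod} together with Theorems~\ref{canonicallift} and~\ref{etalecover} are the tools that make the compatible choice possible. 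Everything else — that $B$ is abelian, that the moduli space is smooth with a universal family, that the Mukai vector can be normalized — is already in hand from the earlier sections.
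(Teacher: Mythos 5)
Your overall strategy is the same as the paper's: realize the partner $B$ as a moduli space of Gieseker-stable sheaves on $C\in\{A,\widehat A\}$ via Theorem \ref{moduli} and Proposition \ref{RelPrime3}, lift $E$, $F$, the \'etale tower and the relevant line bundle to $W$, lift $B$ as the relative moduli space of \S\ref{modulispacech}, apply Sosna's theorem to the geometric generic fibers, and specialize the resulting isomorphism back to the closed fibers (the paper does this last step via \cite[Lemma 6.5]{LO}, i.e.\ Matsusaka--Mumford, which is safer than your appeal to the N\'eron mapping property, since the isomorphism is a priori only over the geometric generic point and need not respect group structures). But there is a genuine gap at the central technical step, and your order of operations hides it: you first fix $v=(r,l,\chi)$ using only Proposition \ref{RelPrime3} and then ask for a lift of $C$, compatible with the tower, to which $l$ lifts. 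This fails in general. The lift $\sC$ is forced by the chosen lifts of $E$ and $F$ through Theorem \ref{etalecover}, and ${\rm Pic}(\sC)\rightarrow{\rm Pic}(C)$ is far from surjective: when $E$ and $F$ are supersingular, ${\rm NS}(E\times F)$ has rank $6$ while the N\'eron--Severi group of the geometric generic fiber of any lift has rank at most $4$, so a general ample class produced by Proposition \ref{RelPrime3} --- in particular one whose ${\rm Pic}(E\times F)$-description via Proposition \ref{Picardprod} involves an inseparable or otherwise non-liftable homomorphism $F\rightarrow E$ --- admits no lift at all, over any choice of $\sE$, $\sF$.

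The missing content is exactly what the paper's Section \ref{finopp} supplies: one must go back and further modify the equivalence so that its vector simultaneously satisfies $(E_1)$--$(E_3)$ \emph{and} has degree-one part of a liftable shape. Concretely, Propositions \ref{RelPrime} and \ref{RelPrime2} (ordinary and mixed cases) and Proposition \ref{chiandr2} (supersingular case) compose with twists $T_C(H^{\otimes n p_2^2})$ so that, via Proposition \ref{Neron}, the class becomes a pullback $\lambda^*l(\varphi,m_1,m_2)$ from $E\times F$; they keep $r$ coprime with $p$ precisely so that replacing $\varphi$ by $\varphi+r\,\xi$, with $\xi$ the separable $\ell$-power isogeny of Proposition \ref{koh}, makes $\varphi$ separable, whence the cover lifts by Theorem \ref{etalecover} and the Mumford-bundle expression lifts with it; and in the degree-three supersingular case the Deuring lift of $(F,\rho)$ is combined with the identity $(1_E\times\varphi)^*\sM_E\simeq(\widehat\varphi\times 1_F)^*\sM_F$ to lift the line bundle over the lift carrying the automorphism, which is what gives $(A_2)$. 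You name these tools and correctly flag the ``lifts must be carefully chosen'' difficulty, but you do not carry out this renormalization of the equivalence, and without it the step ``produce a lift $\widetilde l$ of the class $l$'' is unjustified and in the supersingular cases false as stated. (Minor additional slip: Theorem \ref{moduli} applied to $\Phi:\rd(B)\rightarrow\rd(A)$ makes $B$, not $A$, the moduli space of sheaves on $C\in\{A,\widehat A\}$; your write-up asserts both at different points.)
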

\noindent In order to prove the previous result, we will consider the following set of hypotheses
\begin{set}\label{sett}
We denote by $E$ and $F$ two elliptic curves over an algebraically closed field $k$ of characteristic $p>0$. 
Moreover we set $\nu:A\rightarrow E\times F$ to be either an isomorphism of abelian surfaces, 
or an \'{e}tale cyclic cover of degree $d_{\nu}=2,3$ (as in the hypotheses of Theorem \ref{MTg}). Finally assume that $p>{\rm deg}\, \nu$.
\end{set}

\begin{rmk}\label{dualiso}
Since the exponent of an isogeny divides its degree, the exponent of the isogeny $\nu$ of Setting \ref{sett} is either 
one if $\nu$ is an isomorphism, 
or $d_{\nu}$ otherwise. Let now $\mu:E\times F\rightarrow A$ be an isogeny of exponent $d_{\nu}$ such that 
$\mu\circ \nu=(d_{\nu})_A$. Then the dual isogeny
$\widehat{\mu}:\widehat{A}\rightarrow E\times F$ is either 
an isomorphism, or else its degree and exponent satisfy $({\rm deg}\, \widehat{\mu},{\rm exp} \, \widehat{\mu}) \, = \, (d_{\nu}^3,\,d_{\nu}).$
\end{rmk}

\noindent As an application of Theorem \ref{etalecover} we deduce that 
both the isogenies $\nu$ and $\widehat{\mu}$ of Setting \ref{sett} and Remark \ref{dualiso} lift 
to the ring of Witt vectors. In the following result we check that their degrees and exponents remain unchanged when passing from the special fiber to the 
general fiber.

\begin{prop}\label{liftetale}
 Let $E$ and $F$ be elliptic curves and $\varphi:A\rightarrow E\times F$ be an \'{e}tale 
 isogeny of abelian surfaces. 
 Let $q\neq p$ be a prime integer and assume that either $\varphi$ is an isomorphism, or ${\rm deg} \, \varphi =q$, or
 ${\rm deg} \, \varphi=q^3$ and ${\rm exp} \, \varphi=q$.
 If $\sE\rightarrow W$ and $\sF\rightarrow W$ are projective lifts of $\sE$ and $\sF$ over a finite ramified extension of
 the ring of Witt vectors respectively, then 
there exist a projective lift $\sA\rightarrow W$ of $A$ and an isogeny $\varphi_W:\sA\rightarrow \sE \times_W\sF$ 
such that $\varphi_W$ lifts $\varphi$ and its restriction 
$\varphi_{\eta}:A_{\eta}\rightarrow E_{\eta}\times F_{\eta}$ to the geometric general fibers is an 
isogeny with ${\rm deg} \, \varphi_{\eta}={\rm deg}\, \varphi$ and ${\rm exp} \, \varphi_{\eta}={\rm exp}\, \varphi$
\end{prop}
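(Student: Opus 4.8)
The plan is to build the lift $\mathcal{A}$ concretely, as a quotient of the abelian scheme $\mathcal{E}\times_W\mathcal{F}$, rather than as an abstract \'etale cover of it; this way the abelian-scheme structure on $\mathcal{A}$, and hence the fact that $\varphi_\eta$ is an \emph{isogeny}, comes for free, and the only real work is to control the kernel. If $\varphi$ is an isomorphism the statement is vacuous: take $\mathcal{A}:=\mathcal{E}\times_W\mathcal{F}$ and $\varphi_W:=\mathrm{id}$. So assume $\deg\varphi\in\{q,q^3\}$; in both cases $\exp\varphi=q$ (a group of prime order is cyclic, and in the remaining case this is a hypothesis), and $q$ is a unit in $W$ because $q\neq p=\mathrm{char}\,k$. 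First I would invoke the fact recalled before Proposition~\ref{Neron} to choose a separable isogeny $\psi\colon E\times F\to A$ of exponent $q$ with $\psi\circ\varphi=q_A$ and $\varphi\circ\psi=q_{E\times F}$. Then $\deg\psi=\deg(q_A)/\deg\varphi=q^4/\deg\varphi$, the subgroup scheme $G:=\ker\psi$ is finite \'etale (its order $\deg\psi$ is prime to $p$), it is contained in $(E\times F)[q]$ since $\exp\psi=q$, and $A\cong(E\times F)/G$.

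Next I would lift $G$. Since $q$ is invertible in $W$, the group scheme $(\mathcal{E}\times_W\mathcal{F})[q]=\mathcal{E}[q]\times_W\mathcal{F}[q]$ is finite \'etale over $\operatorname{Spec}W$ of rank $q^4$, with closed fiber $(E\times F)[q]$. Because $W$ is strictly henselian --- this is the content of Theorem~\ref{etalecover} applied with $S=X=\operatorname{Spec}W$ --- restriction to the closed fiber is an equivalence between finite \'etale $W$-schemes and finite \'etale $k$-schemes, and likewise between finite \'etale group schemes (resp.\ subgroup schemes) over $W$ and over $k$; in particular $(\mathcal{E}\times_W\mathcal{F})[q]$ is the constant group scheme on $(E\times F)[q](k)\cong(\rz/q\rz)^4$, and $G$ lifts uniquely to a finite \'etale subgroup scheme $\mathcal{G}\subset(\mathcal{E}\times_W\mathcal{F})[q]$. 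I would then set $\mathcal{A}:=(\mathcal{E}\times_W\mathcal{F})/\mathcal{G}$: the quotient of an abelian scheme by a finite locally free subgroup scheme is again an abelian scheme over $W$, so $\mathcal{A}\to W$ is projective and flat, and since the formation of such a quotient commutes with base change its closed fiber is $(E\times F)/G\cong A$ --- thus $\mathcal{A}$ is a projective lift of $A$. Writing $\psi_W\colon\mathcal{E}\times_W\mathcal{F}\to\mathcal{A}$ for the quotient isogeny (so $\ker\psi_W=\mathcal{G}$ and $(\psi_W)_k=\psi$), the inclusion $\mathcal{G}\subset(\mathcal{E}\times_W\mathcal{F})[q]$ forces $q_{\mathcal{E}\times_W\mathcal{F}}$ to factor uniquely as $\varphi_W\circ\psi_W$ for an isogeny $\varphi_W\colon\mathcal{A}\to\mathcal{E}\times_W\mathcal{F}$; then $\psi_W\circ\varphi_W=q_{\mathcal{A}}$ as well (cancel the epimorphism $\psi_W$), and $\deg\varphi_W=q^4/\deg\psi_W=\deg\varphi$.

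It remains to read off the fibers. On the closed fiber $(\varphi_W)_k\circ\psi=(\varphi_W)_k\circ(\psi_W)_k=q_{E\times F}=\varphi\circ\psi$, so $(\varphi_W)_k=\varphi$ because $\psi$ is surjective; hence $\varphi_W$ lifts $\varphi$. On the geometric generic fiber $\varphi_\eta:=(\varphi_W)_\eta$ is an isogeny of abelian varieties with $\deg\varphi_\eta=\deg\varphi_W=\deg\varphi$. Finally, $\varphi_W$ has degree prime to $p$ and is therefore \'etale, so $\ker\varphi_W$ (which commutes with base change) is a finite \'etale group scheme over $W$; by the same rigidity it is the constant group scheme on $\ker(\varphi_W)_k(k)=\ker\varphi(k)$, so its geometric generic fiber has the same group of points, and $\exp\varphi_\eta=\exp\big(\ker\varphi(k)\big)=\exp\varphi$.

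The only genuinely non-formal ingredient is the rigidity of finite \'etale (group) schemes over the strictly henselian ring $W$: this is what pins down $\mathcal{G}$, forces $\ker\varphi_W$ to be constant along $\operatorname{Spec}W$, and thereby transports the \emph{value} of the exponent --- not just the degree, which is locally constant for trivial reasons --- from the special fiber to the generic one. Everything else (the complementary isogeny, factoring $q_{\mathcal{E}\times_W\mathcal{F}}$, base change of quotients) is bookkeeping. I expect the one point that needs care in the write-up to be checking that the $\varphi_W$ produced lifts the \emph{given} $\varphi$ rather than some isogeny in its orbit, which is precisely the role of the compatibility $(\psi_W)_k=\psi$ together with the surjectivity of $\psi$.
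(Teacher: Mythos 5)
Your proof is correct, but it takes a genuinely different route from the paper's. The paper applies Theorem \ref{etalecover} directly to the proper $W$-scheme $\sE\times_W\sF$: the \'etale cover $\varphi$ lifts to an \'etale cover $\varphi_W:\sA\to\sE\times_W\sF$, which after composing with a translation may be assumed to be a homomorphism; the degree is preserved because $\ker\varphi_W$ is finite \'etale over $W$, and the exponent in the case $\deg\varphi=q^3$ is handled by also lifting the complementary isogeny $\psi$ and using the uniqueness in Theorem \ref{etalecover} to identify $\varphi_W\circ\psi_W$ with the multiplication-by-$q$ map, so that $\exp\varphi_\eta\in\{1,q\}$ and $1$ is excluded. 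You instead build the lift from below: you lift $G=\ker\psi$ to a constant subgroup scheme $\sG\subset(\sE\times_W\sF)[q]$ --- using only the rigidity of finite \'etale (group) schemes over the strictly henselian ring $W$, i.e.\ the case $X=S={\rm Spec}\,W$ of Theorem \ref{etalecover}, together with the fact that the $q$-torsion is finite \'etale since $q\neq p$ --- and define $\sA$ as the quotient and $\varphi_W$ as the factorization of $q_{\sE\times_W\sF}$ through the quotient map. What your route buys: the abelian-scheme structure on $\sA$, hence the fact that $\varphi_\eta$ is an isogeny, is automatic (no translation adjustment), and the constancy of $\ker\varphi_W$ along ${\rm Spec}\,W$ gives the stronger conclusion that $\ker\varphi_\eta$ and $\ker\varphi$ are isomorphic as groups, so degree and exponent are transported uniformly in all three cases. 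What it costs: you must invoke the existence of the quotient of a projective abelian scheme by a finite flat (here \'etale) subgroup scheme and its compatibility with base change --- a standard but nontrivial input the paper avoids --- and one must check, as you do via the cancellation $(\varphi_W)_k\circ\psi=q_{E\times F}=\varphi\circ\psi$ with $\psi$ faithfully flat, that the resulting $\varphi_W$ restricts to the given $\varphi$ on the closed fiber (modulo the tacit identification of $\sA_k=(E\times F)/G$ with $A$ via $\psi$). Both arguments are complete; yours is arguably cleaner on the exponent, the paper's is lighter on moduli-theoretic/quotient machinery.
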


\begin{proof}
 By Theorem \ref{etalecover} there is a projective lift $\sA\rightarrow W$ of $A$ 
 and an \'{e}tale cover $\varphi_W:\sA\rightarrow \sE\times_{W}\sF$ that specializes to $\varphi$. 
 Up to composing with a translation of $\sE\times_{W}\sF$, we 
 can suppose that $\varphi_W$ is a homomorphism of groups. Hence the restriction of 
 $\varphi_W$ to the geometric generic fiber
 of $\sA$ is an isogeny $\varphi_{\eta}:A_{\eta}\rightarrow E_{\eta}\times F_{\eta}$ such that ${\rm deg}\,\varphi_{\eta}={\rm deg}\, \varphi$.
To see this we notice that the kernel $\sK$ of $\varphi_W$ is a finite \'{e}tale group over $W$ and moreover,
as $\varphi$ is separable, we have $${\rm deg}\, \varphi_{\eta}=|{\rm ker}\,\varphi_{\eta}|=  |\sK_{\eta}|=|\sK_{k}|
=|{\rm ker}\, \varphi|={\rm deg}\, \varphi$$ where $\sK_k$ is the closed fiber and $\sK_{\eta}$ is the geometric generic fiber.
In particular this takes care of the case when the degree $\varphi$ is either one or $q$.

Suppose now that ${\rm deg}\,\varphi=q^3$ and ${\rm exp} \, \varphi=q$. 
We only need to show that ${\rm exp}\, \varphi_{\eta}=q$. Let $\psi:E\times F\rightarrow A$ be an isogeny such that 
$\varphi \circ \psi=q_{E\times F}$ and let
$\psi_W:\mathcal{X}\rightarrow \sA$ be a lift of $\psi$ as in Theorem \ref{etalecover} so that 
$\big(\varphi_{W}\circ \psi_{W}\big)_{|\mathcal{X}_{k}}=\varphi\circ \psi=q_{E\times F}$.
As the multiplication-by-$q$ map $q_W:\sE\times_{W}\sF \rightarrow \sE\times_{W}\sF$ 
lifts $q_{E\times F}$ as well, we conclude that $\mathcal{X}\simeq \sE\times_{W}\sF$ and $\varphi_W\circ \psi_W=q_W$ 
By restricting to the geometric general fibers, we find that
$\varphi_{\eta}\circ \psi_{\eta} = q_{(E\times F)_{\eta}}$ is the multiplication-by-$q$-map on the geometric general fiber $(E\times F)_{\eta}$.
Therefore the 
exponent of $\varphi_{\eta}$ is either one or $q$, but if ${\rm exp}\,\varphi_{\eta}=1$, then both $\varphi_{\eta}$ and $\varphi$ 
are isomorphisms, which is excluded by hypothesis.
  
\end{proof}

%
%
%
%
%
\noindent We will deduce Theorem \ref{MTg} from the following technical proposition.
\begin{prop}\label{cond&stat}
Assume Setting \ref{sett} and
let $\Phi:\rd(B)\rightarrow \rd(A)$ be an equivalence of derived categories of abelian surfaces. 
Suppose that there exists an equivalence $\Psi:\rd(B)\rightarrow 
 \rd(C)$ with $C\in\{A,\widehat{A} \}$ such that the Mukai vector
 $$v\, := \, (r, \, l, \, \chi) \, = \, \Psi^{\rm CH}(0, \, 0, \, 1)$$ satisfies the following conditions:
 \begin{enumerate}
  \item[$(E_1).$] $r$ is positive;\\
  \item[$(E_2).$] the class $l\,\in {\rm CH}^1(C)$ is is ample;\\
  \item[$(E_3).$] $\chi$ is coprime with $r$.
 \end{enumerate}
Set now $\lambda=\nu$ if $C=A$, and $\lambda=\widehat{\mu}$ otherwise (see Remark \ref{dualiso}). In addition assume that there exist
projective lifts $\sE\rightarrow W$ and $\sF\rightarrow W$ of $E$ and $F$ over a finite ramified extension of the ring of Witt vectors respectively
such that the following conditions hold: 
 \begin{enumerate}
  \item[$(A_1)$.] If we denote by 
  $\lambda_W:\sC\rightarrow \sE\times_W\sF$ the lift of $\lambda$ 
  determined by Proposition \ref{liftetale}, and by $L$ an ample line bundle on $C$,  
  then $L$ 
  lies in the image of the restriction map
  $$\rho \; : \; {\rm Pic}(\sC) \, \rightarrow \, {\rm Pic}(C).$$
  \item[$(A_2)$.] If ${\rm deg} \, \nu  =  3$ and $F$ admits an automorphism of groups of order three, 
  then ${\rm rk \, CH}^1(\sC_{\eta})\in \{2,4\}$ where $\sC_{\eta}$ is the geometric generic fiber of $\mathcal{C}\rightarrow W$.
   \end{enumerate}
   \noindent Then Theorem \ref{MTg} holds.
   \end{prop}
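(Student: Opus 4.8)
The plan is to deduce Theorem \ref{MTg} by proving that an arbitrary Fourier--Mukai partner $B$ of $A$ is isomorphic to $A$ or to $\widehat{A}$. By Theorem \ref{moduli} such a $B$ is automatically an abelian surface, so we are in the situation of the hypotheses: an equivalence $\Phi:\rd(B)\to\rd(A)$ together with the auxiliary $\Psi:\rd(B)\to\rd(C)$, $C\in\{A,\widehat{A}\}$, and $v=(r,l,\chi)=\Psi^{\rm CH}(0,0,1)$ satisfying $(E_1)$--$(E_3)$; here $v$ is integral by Proposition \ref{chinteger} and $\langle v,v\rangle_C=\langle(0,0,1),(0,0,1)\rangle_B=0$ by \eqref{isometry}. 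First I would apply Theorem \ref{modulispace}: since $r>0$, $(r,\chi)=1$ and $\langle v,v\rangle_C=0$, the moduli space $\sM_l(v)$ of Gieseker-stable sheaves on $C$ with Mukai vector $v$ (stability computed with respect to the ample class $l$) is a smooth projective surface carrying a universal family $\sU$, with $\Phi_{\sU}:\rd(\sM_l(v))\to\rd(C)$ an equivalence and $\Phi_{\sU}^{\rm CH}(0,0,1)=v$. Then $\Phi_{\sU}^{-1}\circ\Psi:\rd(B)\to\rd(\sM_l(v))$ sends $(0,0,1)$ to $(0,0,1)$ on numerical Chow rings, hence is a filtered equivalence, so $B\simeq\sM_l(v)$ by Proposition \ref{filteredequivalence}. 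As $\{C,\widehat{C}\}=\{A,\widehat{A}\}$, it remains to prove $\sM_l(v)\simeq C$ or $\widehat{C}$.

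To this end I would transport the moduli space to mixed characteristic. Proposition \ref{liftetale} applies to $\lambda$ — an isomorphism, or of degree $q$, or of degree $q^{3}$ and exponent $q$, with $q=d_{\nu}\in\{2,3\}$ prime to $p$ — and produces a projective lift $\sC\to W$ of $C$ together with an isogeny $\lambda_W:\sC\to\sE\times_W\sF$ lifting $\lambda$, with $\deg\lambda_{\eta}=\deg\lambda$ and $\exp\lambda_{\eta}=\exp\lambda$ on the geometric generic fibre. Since $E$ and $F$ lift to elliptic curves over $W$, $\sE\times_W\sF$ is an abelian scheme, and as $\lambda_W$ is an isogeny, $\sC$ (and its dual $\widehat{\sC}$) is an abelian scheme over $W$ lifting $C$ (resp. $\widehat{C}$). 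By $(A_1)$ the ample line bundle $L$ of class $l$ lifts to $\widetilde{L}\in{\rm Pic}(\sC)$, which is relatively ample over $W$; setting $\widetilde{v}=(r,c_1(\widetilde{L}),\chi)$ and using $\widetilde{L}$ as polarization, the relative moduli space $\sM:=\sM_{\sC/W}(\widetilde{v})\to W$ of Section \ref{modulispacech} is — by $(r,\chi)=1$ and $\langle v,v\rangle_C=0$ — a flat projective lift of $\sM_l(v)\simeq B$ whose geometric generic fibre $\sM_{\eta}=\sM_{h_{\eta}}(v_{\eta})$ is a Fourier--Mukai partner of the abelian surface $C_{\eta}$ over the algebraically closed characteristic-zero field $\overline{K}$, hence itself an abelian surface. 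Thus $\sM\to W$ is a smooth proper family of abelian surfaces; choosing a $W$-section (which exists since $W$ is Henselian and $\sM\to W$ is smooth with nonempty closed fibre) realizes the generic fibre as an abelian variety over $K$ with good reduction, so $\sM\to W$ is an abelian scheme.

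Next I would invoke Sosna's theorem on the geometric generic fibre. By Proposition \ref{liftetale}, $\lambda_{\eta}:C_{\eta}\to E_{\eta}\times F_{\eta}$ is an isomorphism; or, for $C=A$ and $\lambda=\nu$, an étale cyclic cover of degree $q\in\{2,3\}$; or, for $C=\widehat{A}$ and $\lambda=\widehat{\mu}$, an isogeny with $(\deg,\exp)=(q^{3},q)$. In the first case $C_{\eta}\simeq E_{\eta}\times F_{\eta}$ and ${\rm FM}(C_{\eta})$ is trivial by Proposition \ref{MTgC} (valid over $\overline{K}$ by the Lefschetz principle); in the second, Proposition \ref{MTgC} applies for $q=2$, and for $q=3$ once we note that $(A_2)$ forces ${\rm rk\,NS}(C_{\eta})={\rm rk\,CH}^{1}(\sC_{\eta})\in\{2,4\}$; in the third, Proposition \ref{MTgC2} applies for $q=2$ ($\deg=8$, $\exp=2$), and for $q=3$ ($\deg=27$, $\exp=3$) again via $(A_2)$. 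In all cases ${\rm FM}(C_{\eta})$ is trivial, so $\sM_{\eta}\simeq C_{\eta}$ or $\widehat{C_{\eta}}$; equivalently, the geometric generic fibre of the abelian scheme $\sM$ is isomorphic, as an abelian variety (after composing with a translation if necessary), to that of $\sC$ or of $\widehat{\sC}$.

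Finally I would specialize this isomorphism, and this is where I expect the main difficulty to lie. Such an isomorphism is defined over a finite extension $L$ of $K$; over the integral closure $W_L$ of $W$ in $L$ — a complete discrete valuation ring whose residue field is still the algebraically closed field $k$, hence a finite ramified extension of $W(k)$ as allowed throughout — the abelian schemes $\sM_{W_L}$ and $\sC_{W_L}$ (or $\widehat{\sC}_{W_L}$) have isomorphic generic fibres and so are isomorphic by uniqueness of the Néron model; restricting to the closed fibre gives $B\simeq\sM_k\simeq C$ (or $\widehat{C}$), so $B\in\{A,\widehat{A}\}$. Since $B$ was an arbitrary Fourier--Mukai partner of $A$, this proves Theorem \ref{MTg}. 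The delicate points are exactly those used in the last two steps: checking that $\sM$, $\sC$ and $\widehat{\sC}$ are genuine abelian schemes over $W$ rather than merely smooth proper families (or torsors) of abelian surfaces, and that Sosna's characteristic-zero isomorphism can be propagated to the closed fibre without enlarging the residue field — which is precisely why the hypotheses $(A_1)$ and $(A_2)$, controlling the lift of the polarization and the Picard rank of the geometric generic fibre, are built into the statement.
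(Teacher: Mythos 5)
Your argument reproduces the paper's proof almost step for step: Step 1 (realize $B$ as $\sM_l(v)$ on $C$ via Theorem \ref{moduli}/\ref{modulispace} and the filtered-equivalence Proposition \ref{filteredequivalence}), Step 2 (use $(A_1)$ to lift the polarization, form the relative moduli space $\sM_{\sC/W}(\widetilde v)$ as a projective lift of $B$, and apply Sosna's characteristic-zero results through Propositions \ref{MTgC} and \ref{MTgC2}, with $(A_2)$ handling the degree-three case and the Lefschetz principle reducing to $\rc$) are exactly the paper's. The only genuine divergence is the specialization step. The paper concludes by citing \cite[Lemma 6.5]{LO}, i.e.\ a Matsusaka--Mumford argument: two smooth projective families over $W$ with isomorphic geometric generic fibres and non-ruled minimal special fibres have isomorphic special fibres; this needs no group structure on the total spaces. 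You instead promote $\sM$, $\sC$ and $\widehat{\sC}$ to abelian schemes over $W$ and invoke uniqueness of N\'eron models over the finite extension $W_L$. This route does work, but the assertion that a section plus ``good reduction'' makes $\sM\to W$ an abelian scheme is quicker than the facts allow as written: you need geometric connectedness of the generic fibre (e.g.\ by semicontinuity of $h^0(\sO)$ from the connected special fibre $B$), a section through the identity of the special fibre (Hensel's lemma, as you say), and then the rigidity theorem on spreading out abelian-scheme structures (Mumford, GIT, Thm.~6.14) applied at the closed point; one also needs the analogous statement for $\sC$ and the existence of the dual abelian scheme $\widehat{\sC}$, and finally that an abelian scheme over a DVR is the N\'eron model of its generic fibre so that the $L$-isomorphism (after a translation) extends over $W_L$ and restricts to the closed fibres, whose residue field is still $k$. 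You flag these as the delicate points, correctly; they are all standard, so the proof is sound, but the paper's appeal to Matsusaka--Mumford buys exactly the avoidance of this group-scheme bookkeeping, while your version has the mild aesthetic advantage of giving the specialization isomorphism canonically (as an extension of the generic one) rather than via a birational/minimality argument.
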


\begin{proof}
 \emph{Step 1.} We first prove that there exists a projective lift $\sB\rightarrow W$ of $B$ such that the geometric generic fiber 
 $\sB_{\eta}$ is derived equivalent to $\sC_{\eta}$. Let $L$ be an ample line bundle on $C$ with class $l$.
 Then Theorem \ref{moduli} implies that $B$ is isomorphic to a moduli space 
 $\sM_{l}(v_l)$ of Gieseker-stable shaves with Mukai vector 
 $v_l=(r,l,\chi)\in {\rm CH}^*(C)$. 
 Consider now a preimage $\widetilde{L}$ of $L$ under $\rho$ as in $(A_1)$ and
 the relative moduli space $$\sM_{\sC/W}(\widetilde{v}_l)\rightarrow W,$$ where $\widetilde{v}_l=(r,\widetilde{l},\chi)$ and $\widetilde{l}$ is the class of $\widetilde{L}$.
 As discussed in \S\ref{modulispacech}, this is a projective lift of $B$ and the geometric generic fiber $\sM_{\eta}$  
is a moduli space of Gieseker-stable sheaves on $\sC_{\eta}$ with Mukai vector 
$v_{\eta}=(r,\widetilde{l}_{|\sC_{\eta}},\chi)$.
Therefore as discussed in Theorem \ref{modulispace}, the condition $(E3)$ implies that 
there exists a universal family $\sU_{\eta}$ on $\sM_{\eta}\times \sC_{\eta}$ inducing an equivalence 
$\Phi_{\sU}:\rd(\sC_{\eta})\rightarrow \rd(M_{\eta})$ 
where $M_{\eta}$ is an irreducible component of $\sM_{\eta}$.

\noindent \emph{Step 2.} The equivalence $\Phi_{\sU}$ shows that $M_{\eta}$ is an abelian surface.
Now we prove that under the assumptions of Theorem \ref{MTg} the abelian surface $\sC_{\eta}$ is isomorphic to either
$M_{\eta}$ or its dual $\widehat{M_{\eta}}$.
By Lefschetz's principle we can suppose that the abelian surface $C$ is defined over a subfield of the complex numbers $\rc$
and therefore that 
$\sC_{\eta}$ is defined over $\rc$.
Suppose first that $\nu:A\rightarrow E\times F$ is an isomorphism. 
Then both $\lambda$ and $\lambda_W$ are isomorphisms and therefore so is the restriction 
$\lambda_{\eta}:\sC_{\eta} \rightarrow \sE_{\eta}\times \sF_{\eta}$ of $\lambda_W$ to the geometric generic fibers.
As a product of elliptic curves has no non-trivial Fourier--Mukai partners (\emph{cf}. Theorem \ref{MTgCori}), 
we deduce then an isomorphism $\sC_{\eta}\simeq M_{\eta}$.

Suppose now that $\nu:A\rightarrow E\times F$ is a degree two cyclic cover. 
By Remark \ref{dualiso} we have $({\rm deg} \, \lambda,{\rm exp} \, \lambda)\in
\{(2,2),(8,2)\}$
and by Proposition \ref{liftetale} 
$(\deg \, \lambda_{\eta},
 {\rm exp} \, \lambda_{\eta})=(\deg{\lambda},{\rm exp}\, \lambda)$. 
 Therefore by Proposition \ref{MTgC} and Proposition \ref{MTgC2} we deduce  that
 either $\sC_{\eta}\simeq M_{\eta}$ or $\sC_{\eta}\simeq \widehat {M_{\eta}}$.
  The case when $\nu$ has degree three follows similarly by involving the condition $(A_2)$.
 
 \noindent \emph{Step 3.} To conclude the proof we use the argument of \cite[Lemma 6.5]{LO} (based on a result of Matsusaka--Mumford)
 in order to prove that the isomorphism 
 $\sC_{\eta} \simeq M_{\eta}$ (\emph{resp.} $\sC_{\eta}\simeq \widehat{M_{\eta}}$) between the geometric generic fibers of the two liftings
 induces an isomorphism $C\simeq B$ (\emph{resp.} $C\simeq \widehat{B}$) between the closed fibers. This immediately yields that 
 either $B\simeq A$ or $B\simeq \widehat{A}$, and hence that ${\rm FM}(A)$ is trivial.
 
 \end{proof}

\section{Finding a suitable equivalence}\label{finopp}
In this section we finish the proof of Theorem \ref{main1}. According to Proposition \ref{cond&stat}, we only need to verify its hypotheses. 
We work under the hypotheses of Setting \ref{sett} and assume that the abelian surface $B$ is a Fourier--Mukai partner of $A$. 
In the following we will distinguish two cases: $(a)$ at least one of the two elliptic curves $E$ or $F$ is ordinary, 
 and $(b)$ both $E$ and $F$ are supersingular.
  
\subsection{The case where one of the two curves is ordinary}
The following two propositions show the existence of an equivalence $\Psi:\rd(B)\rightarrow \rd(C)$ satisfying the hypotheses 
$(E_1)$, $(E_2)$ and $(E_3)$ of 
Proposition \ref{cond&stat}.

\begin{prop}\label{RelPrime}
Let $A$ and $B$ be abelian surfaces and $\Phi :  \rd(B) \rightarrow \rd(A)$ be an equivalence of derived categories. 
Moreover fix two distinct primes $p_1$ and 
$p_2$. Then there exists an equivalence $\Psi  : \rd(B) \rightarrow \rd(C)$ with $C\in \{A,\, \widehat{A}\}$ such that the vector 
$$ (r, \, l, \, \chi) \; := \; \Psi^{\rm CH}(0, \,0, \,1)$$ 
satisfies one the two following statements:
\begin{enumerate}
 \item $r$ is relatively prime with both $p_1$ and $p_2$;\\
 \item either $p_1$ divides $r$ but not $\chi$ and $p_2$ divides $\chi$ but not $r$, or \emph{viceversa}.
\end{enumerate}
\end{prop}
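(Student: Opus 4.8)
The plan is to reduce, via the elementary equivalences collected in Section~\ref{autoeq}, to a single vector and then run a divisibility bookkeeping argument for the two primes $p_1$ and $p_2$ simultaneously, very much in the spirit of the proof of Proposition~\ref{RelPrime3} but now splitting into cases according to whether $p_1$ or $p_2$ divides the rank.

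First I would set $v_0:=\Phi^{\rm CH}(0,0,1)=(r_0,l_0,\chi_0)$ and, exactly as in Proposition~\ref{RelPrime3}, normalize so that the rank component is positive: if $r_0<0$ compose with $[1]$; if $r_0=0$ and $\chi_0\neq 0$ compose with $\sS_A$ (and possibly $[1]$) to swap rank and $\chi$; and if $r_0=\chi_0=0$ use the isometry \eqref{isometry} with $v(\sO_A)=(1,0,0)$ to find a class $b_0$ with $l_0\cdot b_0=1$, twist by a line bundle with that class via \eqref{actionH} to make $\chi$ nonzero, and then apply $\sS_A$. Call the resulting equivalence $\Psi_1:\rd(B)\to\rd(C)$, $C\in\{A,\widehat A\}$, with $v_1:=\Psi_1^{\rm CH}(0,0,1)=(r_1,l_1,\chi_1)$ and $r_1>0$.

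Next, as in Proposition~\ref{RelPrime3}, take $w_1:=\Psi_1^{\rm CH}(1,0,0)=(s_1,b_1,\xi_1)$; since $\Psi_1$ is an equivalence and $\langle(0,0,1),(1,0,0)\rangle_B=1$, the isometry \eqref{isometry} gives $I:=l_1\cdot b_1$ satisfying $I-r_1\xi_1-\chi_1 s_1=1$, so $\gcd(r_1,I,\chi_1)=1$; in particular no prime can divide all three of $r_1$, $I$, $\chi_1$. Now twist by $B_1^{\otimes n}$, where $B_1$ has class $b_1$, using \eqref{actionH}: this replaces $(0,0,1)$ by $(r_1,\,l_1+r_1 n b_1,\,\chi_1+nI+r_1 n^2 b_1^2/2)$, so the rank stays $r_1$ and the new Euler characteristic is $\chi_1+nI$ modulo $r_1$. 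The endgame is to choose $n$ so that the congruence class of $\chi_1+nI$ modulo $p_1$ and modulo $p_2$ lands in the good range. Here I would case-split on how $p_1$ and $p_2$ meet $r_1$: if $p_1\nmid r_1$ and $p_2\nmid r_1$ then conclusion~(i) is automatic (no twist needed); if exactly one of them, say $p_1$, divides $r_1$, then $\chi_1$ is a fixed residue mod $p_1$, while I can adjust $\chi_1+nI$ mod $p_2$ freely by choosing $n$ in a suitable residue class mod $p_2$ (using $\gcd(r_1,I,\chi_1)=1$ to know $I\not\equiv 0$ whenever $\chi_1\equiv 0$ mod the offending prime), and the Chinese Remainder Theorem lets me impose this mod-$p_2$ condition independently, so depending on whether $p_1\mid\chi_1$ I land in conclusion~(i) or~(ii); if both $p_1\mid r_1$ and $p_2\mid r_1$, then since $\gcd(r_1,I,\chi_1)=1$ at least one of $I,\chi_1$ is nonzero mod $p_1$ and at least one mod $p_2$, and a short inspection of the four sub-cases shows one can always choose $n$ (or no twist) realizing (i) or (ii). Finally set $\Psi:=T_C(B_1^{\otimes n})\circ\Psi_1$.

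The main obstacle, and the part that needs care rather than routine calculation, is the last case-split: organizing the divisibility bookkeeping so that, for \emph{every} configuration of $(p_1,p_2)$ against $(r_1,I,\chi_1)$, one of the two alternatives in the statement is reached, and in particular handling the sub-case where $p_1\mid r_1$ and $p_1\mid\chi_1$ (forcing $p_1\nmid I$, hence $\chi_1+nI\not\equiv 0\bmod p_1$ for $n$ in a nonempty residue class, which is what makes alternative~(ii) available rather than impossible). The rest — the normalization of the rank via $[1]$, $\sS_A$ and a preliminary twist, and the behavior of $\Psi^{\rm CH}$ under composition — is a direct transcription of the techniques already used in Proposition~\ref{RelPrime3}.
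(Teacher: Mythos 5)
Your argument has a genuine gap in the endgame. Your final equivalence is $\Psi:=T_C(B_1^{\otimes n})\circ\Psi_1$, and a twist never changes the rank component. So whenever both $p_1$ and $p_2$ divide $r_1$ --- which does happen, e.g.\ when $\Phi^{\rm CH}(0,0,1)$ already has positive rank divisible by $p_1p_2$, so that your normalization step is the identity and $r_1=r_0$ --- neither alternative of the proposition can possibly hold for $\Psi$: alternative (i) needs $p_1\nmid r$ and $p_2\nmid r$, and each branch of alternative (ii) needs one of the two primes not to divide $r$. Hence your third case (``a short inspection of the four sub-cases shows one can always choose $n$'') is asserting something that is false for every choice of $n$. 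The missing idea is precisely the paper's: after twisting so that the \emph{third} component $\chi$ becomes coprime to both primes (or acquires the right mixed divisibility), one must compose with the Mukai equivalence $\sS_C:\rd(C)\to\rd(\widehat C)$, which swaps the degree-$0$ and degree-$2$ components and thereby moves the good $\chi$ into the rank slot. Every case of the paper's proof ends with such a composition; your proposal only uses $\sS$ in the preliminary normalization when $r_0\le 0$.

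There is a second, related problem in your case where exactly one prime, say $p_1$, divides $r_1$. Since the rank is unchanged, (i) is unreachable, and to get (ii) you would need to force $p_2\mid\chi$ while $p_1\nmid\chi$. But modulo $p_2$ (a prime \emph{not} dividing $r_1$) the twist changes $\chi$ by $nI+r_1n^2\,b_1^2/2$, and nothing prevents both $I$ and $b_1^2/2$ from vanishing mod $p_2$: the relation $I-r_1\xi_1-\chi_1 s_1=1$ only constrains $I$ modulo primes dividing both $r_1$ and $\chi_1$, so your parenthetical ``$I\not\equiv 0$ whenever $\chi_1\equiv 0$ mod the offending prime'' is unjustified for $p_2$, and the claim that you can adjust $\chi$ ``freely'' mod $p_2$ fails (even when the quadratic is nonconstant it need not hit $0$). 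In the paper this difficulty never arises because the goal of the twisting step is only to make $\chi$ \emph{nonzero} modulo the relevant primes (for which the relation does give control, namely for primes dividing $r_0$ and $\chi_0$, together with a generic or specific choice of $n$ such as $n=p_2$ in Case V), after which $\sS_A$ finishes the proof.
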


\begin{proof} 
 Set $$v_0 \; := \; (r_0, \, l_0, \, \chi_0) \; = \; \Phi^{\rm CH}(0, \,0, \,1)$$
  $$w_0\; := \; (s_0, \, h_0, \, \xi_0) \; = \; \Phi^{\rm CH}(1, \,0, \,0).$$
   Since $\Phi$ is an equivalence, by \eqref{isometry} we find that 
 \begin{equation}\label{vw}
  1 \, = \, \langle v_0, w_0\rangle_A \, = \, I \, - \, s_0 \, \chi_0 \, - \, r_0 \, \xi_0,\quad  \mbox{ where }\quad I \; := \; l_0\cdot h_0.
  \end{equation}
 Let $H_0$ be a line bundle on $A$ such that its class in the N\'{e}ron--Severi group is $h_0$. Therefore 
 at the level of numerical Chow rings the equivalence 
 $\Phi_n :=  T_A(H_0^{\otimes n}) \circ \Phi  :  \rd(B)  \rightarrow  \rd(A)$ ($n\in \rz_{>0}$) sends $(0,0,1)$ to
  $$v_n \, := \, \Phi_n^{\rm CH}(0,0,1) \, = \, \big(r_0,\, l_0\,+\,r_0\,n\,h_0, \,\chi_n \big)$$
 where 
 \begin{equation}\label{chinew}
 \chi_n\, := \, \chi_0 \, + \, n\,I\, + \, r_0 \, n^2 \,\frac{h_0^2}{2}.
 \end{equation}
\noindent  We divide the proof in five cases.
 
 \noindent \emph{Case I:} Suppose that neither $p_1$ nor $p_2$ divides $\chi_0$. In this case the equivalence $\Psi$ is given by the composition
 $\sS_A \circ \Phi\, : \, \rd(B)\rightarrow \rd(\widehat{A})$.
 
 \noindent \emph{Case II:} Suppose that both $p_1$ and $p_2$ divides both $r_0$ and $\chi_0$.
  By \eqref{vw} we see that $I$ is relatively prime with $p_1$ and $p_2$ as well.
 Choose now a positive integer $n$ coprime with both $p_1$ and $p_2$. 
 Therefore by looking at the 
 definition \eqref{chinew} of $\chi_n$, this immediately implies that $\chi_n$ is relatively prime to both $p_1$ and $p_2$. 
 We conclude then as in Case I.
 
 \noindent \emph{Case III:} Suppose that both $p_1$ and $p_2$ divides $r_0$, and that precisely one of them, say $p_1$, divides $\chi_0$.
 We choose a generic positive integer $n$ such that $n$ is relatively prime to both $p_1$ and $p_2$.
 By \eqref{vw} $I$ is relatively prime to $p_1$, and by \eqref{chinew} $p_1$ does not divide $\chi_n$. Moreover, again by \eqref{chinew} and the 
 fact that $n$ is general, we can suppose that $p_2$ does not divide $\chi_n$ as well. We then set $\Psi:= \sS_A \circ \Phi_n$.
 
 \noindent \emph{Case IV:} Suppose that both $p_1$ and $p_2$ divides $\chi_0$ and that precisely one of them, say $p_1$, divides $r_0$.
In this case we proceed as in Case III by considering the composition $\sS_A\circ \Phi$ in place of $\Phi$.

\noindent \emph{Case V:} Suppose that one of the primes, say $p_1$, divides both $r_0$ and $\chi_0$, but $p_2$ divide nor $r_0$ neither 
$\chi_0$. Let $n=p_2$ and consider $\Phi_n$. By \eqref{vw} $p_1$ does not divide $I$, and hence $p_1$ does not divide $\chi_n$. 
Moreover, by our choice of $n$, we have that $p_2$ does not divide $\chi_n$ as well. We conclude then as in Case I.
 \end{proof}

\begin{prop}\label{RelPrime2}
 Assume the assumptions of Setting \ref{sett} and let $\Phi: \rd(B)\rightarrow \rd(A)$ be an equivalence of triangulated categories. 
 Moreover assume that ${\rm deg}\,\nu\geq 2$.
 Then
 there exists an equivalence 
 $\Xi : \rd(B) \rightarrow  \rd(C)$ with $C\in \{A, \, \widehat{A}\}$  
 such that the vector 
 $$(r, \, l, \, \chi) \; := \; \Xi^{\rm CH}(0,\,0,\,1)$$ satisfies:
 \begin{enumerate}
  \item $r$ is positive and relative prime with $p$;\\
  \item the class $l \in {\rm CH}^1(C)$ is ample;\\
  \item $\chi$ is relative prime with $r$.
 \end{enumerate}
 Set now $\lambda=\nu$ if $C=A$, and $\lambda=\widehat \mu$ otherwise. Then 
 the class $l$ is the pull-back of some ample class in ${\rm CH}^1(E\times F)$ via $\lambda$. 
 \end{prop}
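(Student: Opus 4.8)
The plan is to combine Proposition~\ref{RelPrime}, which grants control of the rank modulo $p$ together with the choice of target $C\in\{A,\widehat{A}\}$, with the system of twists from the proof of Proposition~\ref{RelPrime3}, which modifies the second and third Mukai-vector entries but leaves the rank fixed; the final assertion on $\lambda$ will then follow formally from the fact that a separable isogeny induces an isomorphism on rational numerical Chow groups. The only delicate part is the bookkeeping in the first step: one must arrange $\gcd(r,p)=1$ in a way that is not undone later, and the twists of Proposition~\ref{RelPrime3} cooperate precisely because they leave the rank untouched.

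First I would apply Proposition~\ref{RelPrime} to $\Phi:\rd(B)\rightarrow\rd(A)$ with the primes $p_1=p$ and $p_2=\ell$ for any prime $\ell\neq p$. This produces an equivalence $\Psi_1:\rd(B)\rightarrow\rd(C)$ with $C\in\{A,\widehat{A}\}$ whose vector $\Psi_1^{\rm CH}(0,0,1)=(r_1,l_1,\chi_1)$ satisfies either $(1)$ $\gcd(r_1,p)=1$, or $(2)$ exactly one of $p,\ell$ divides $r_1$ and the complementary one divides $\chi_1$. In case $(1)$ I keep $\Psi_1$. In case $(2)$, if $p\mid r_1$ then $p\nmid\chi_1$, so composing with $\sS_C$, which swaps the first and third entries, yields a new rank $\chi_1$ coprime to $p$; if $p\nmid r_1$ I again keep $\Psi_1$. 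In all these situations the relevant rank entry is nonzero --- a vanishing entry would be divisible by both $p$ and $\ell$, against the case hypothesis --- so, composing with the shift $[1]$ if needed, I may assume it positive. Call the outcome $\Psi_2:\rd(B)\rightarrow\rd(C)$, with $v_2:=\Psi_2^{\rm CH}(0,0,1)=(r_2,l_2,\chi_2)$, $r_2>0$ and $\gcd(r_2,p)=1$, and set $\lambda=\nu$ if $C=A$ and $\lambda=\widehat{\mu}$ if $C=\widehat{A}$.

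Next I would rerun the argument of Proposition~\ref{RelPrime3}, but starting from $\Psi_2$ rather than from $\Phi$. Since $\langle v_2,\Psi_2^{\rm CH}(1,0,0)\rangle_C=1$ by \eqref{isometry}, choosing a line bundle $B_0$ on $C$ whose numerical class is the middle entry of $\Psi_2^{\rm CH}(1,0,0)$ and composing with $T_C(B_0^{\otimes n})$ for a suitable $n>0$ makes the third entry coprime to the rank, via the same case analysis on the prime divisors of $r_2$ carried out there. Composing further with $T_C(\Theta^{\otimes r_2 d})$, where $\Theta:=\lambda^{*}\Theta_0$ for a fixed ample line bundle $\Theta_0$ on $E\times F$ and $d\gg 0$, turns the middle entry into an ample class while altering the third entry only by multiples of $r_2$, so its coprimality with the rank is preserved. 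By \eqref{actionH} every $T_C$-twist leaves the rank equal to $r_2$, hence properties $(1)$ and $(3)$ persist and $\langle v,v\rangle_C=0$ holds throughout. The resulting composite equivalence is the desired $\Xi$, and $(r,l,\chi):=\Xi^{\rm CH}(0,0,1)$ satisfies $(1)$, $(2)$ and $(3)$.

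It remains to identify $l$ as a pull-back. Here $\nu$ is \'{e}tale by Setting~\ref{sett}, and $\widehat{\mu}$ is a separable isogeny since $\ker\mu$ has order prime to $p$ (because $p>\deg\nu$) and is therefore \'{e}tale, so that its Cartier dual $\ker\widehat{\mu}$ is \'{e}tale as well; thus $\lambda$ is in either case a separable isogeny of abelian surfaces. Consequently $\lambda^{*}:{\rm CH}^1(E\times F)\rightarrow{\rm CH}^1(C)$ is an isomorphism: it is injective since $\lambda$ is surjective, and onto since its image contains $({\rm exp}\,\lambda)^{2}\,{\rm CH}^1(C)={\rm CH}^1(C)$ by Proposition~\ref{Neron} and Notation~\ref{notat}. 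Setting $m:=(\lambda^{*})^{-1}(l)$ we obtain $l=\lambda^{*}m$, and $m$ is ample because $\lambda$ is finite and surjective while $\lambda^{*}m=l$ is ample; alternatively, by construction of the last twist $m$ is a fixed class plus the positive multiple $r_2^{2}d\,[\Theta_0]$ of the ample class $[\Theta_0]$. This proves the proposition.
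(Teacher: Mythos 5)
Your construction of an equivalence $\Xi$ achieving (i)--(iii) is sound and runs essentially parallel to the paper's (Proposition~\ref{RelPrime} followed by the twisting arguments of Proposition~\ref{RelPrime3}), but your final paragraph misses the real content of the last assertion, and this is a genuine gap. Treating $\lambda^*:{\rm CH}^1(E\times F)\to{\rm CH}^1(C)$ as an isomorphism is only correct with $\mathbf{Q}$-coefficients; read that way, the claim ``$l$ is the pull-back of an ample class'' is vacuous and would hold for \emph{any} ample $l$, which is not how the proposition is used. In Proposition~\ref{cond&stat} (condition $(A_1)$), and in Propositions~\ref{propord} and~\ref{chiandr2}, one needs $l=\lambda^*l'$ where $l'$ is the numerical class of an actual ample line bundle on $E\times F$, i.e.\ an \emph{integral} class: the lifting argument proceeds by lifting a line bundle on $E\times F$ to $\sE\times_W\sF$ and pulling back along $\lambda_W$, and a $\mathbf{Q}$-class cannot be lifted in this way. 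Integrally, $\lambda^*:{\rm NS}(E\times F)\to{\rm NS}(C)$ is in general not surjective; Proposition~\ref{Neron} (whose content is integral --- rationally it is trivial, as is your inversion of $\lambda^*$) only guarantees $({\rm exp}\,\lambda)^2\,{\rm NS}(C)\subset{\rm Im}(\lambda^*)$.

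Your construction does not produce such an integral preimage: you ran Proposition~\ref{RelPrime} with an arbitrary auxiliary prime $\ell$ and twisted by an arbitrary line bundle $B_0$ on $C$, so the resulting middle entry $l_2+r_2\,n\,b_0+r_2^2\,d\,\lambda^*\theta_0$ has no reason to lie in $\lambda^*{\rm NS}(E\times F)$. This is precisely why the paper takes $p_2={\rm deg}\,\nu$ in Proposition~\ref{RelPrime} and then only performs twists that keep the middle entry divisible by $p_2^2$ or manifestly a pull-back: first by $L^{\otimes m}$ with $p_2^2\mid(1+r_0m)$, then by $H^{\otimes n p_2^2}$, and finally by $\lambda^*\Theta^{\otimes rd}$, invoking Proposition~\ref{Neron} at each stage to rewrite the middle entry as $\lambda^*$ of an integral class (and, in its case (b), inserting an extra $\sS_C$ and using $l_0^2=2r_0\chi_0$ to keep $\chi$ coprime with $r$ while doing so). To repair your argument you would need to redo the bookkeeping with $p_2={\rm deg}\,\nu$ and restrict all twists to classes of this kind; the positivity and coprimality parts of your write-up would survive unchanged.
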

 
\begin{proof}
 By using Proposition \ref{RelPrime} with $p_1=p$ and $p_2=\deg \, \nu$, we can find an equivalence
 $\Psi:\rd(B)\rightarrow \rd(C)$ with $C\in \{A,\widehat{A}\}$ such that the vector $$v_0:=(r_0, \, l_0, \, \chi_0)=\Psi^{\rm CH}(0, \,0, \,1)$$ 
 satisfies one 
 of the two following conditions: $(a)$ $r_0$ is relatively prime to both $p_1$ and $p_2$, or $(b)$ one of the primes $p_1$ and $p_2$ 
 divides $r_0$ but not $\chi_0$, while the other divides $\chi_0$ but not $r_0$.
 Set $\lambda=\nu$ if $C=A$ and $\lambda =\widehat{\mu}$ otherwise.
 We claim that there exists an equivalence $\Xi_1:\rd(B)\rightarrow \rd(C)$ such that 
 the vector $$v_1:=(r_1, \, \lambda^*l_1, \, \chi_1)\, = \, \Xi_1^{\rm CH}(0, \, 0, \,1)$$
 satisfies: $r_1$ is positive and relatively prime with $p$, and $l_1\in {\rm CH}^1(E\times F)$. 
  In order to prove the claim, we distinguish the two cases $(a)$ and $(b)$ mentioned above. 
 Suppose first case $(a)$. As $r_0$ is not zero, we can make it positive by composing with the shift functor, if necessary.
 Let $L$ be a line bundle representing $l_0$ and $m$ be a positive integer such that $p_2^2$ divides $(r_0 \, m+1)$. Hence we can write $1+r_0 \, m=
 p_2^2 \, u$ for some integer $u$ and we consider the composition $T_C(L^{\otimes m})\circ \Psi$. 
 Then 
 $$\big(T_C(L^{\otimes m})\circ \Psi \big)^{\rm CH}(0,0,1) \; = \; \big(r_0, \, (p_2^2 \, u)\, l_0, \,\chi_1\big)$$ for some integer $\chi_1$.
 By Proposition \ref{Neron} there exists a class $l_1\in {\rm CH}^1(E\times F)$ such that $(p_2^2 \, u) \, l_0=\lambda^* (u\, l_1)$. 
 This proves the claim in case 
 $(a)$ as we can set $\Xi_1:=T_C(L^{\otimes m})\circ \Psi$ and $v_1:=(r_0, \, (p_2^2 \,u)\, l_0, \,\chi_1)$.
 
 We now suppose case $(b)$. If necessary we replace $\Psi$ with $\sS_C\circ \Psi$ in order to make $r_0$ 
 divisible by $p_1=p$, but relatively prime with $p_2={\rm deg} \, \nu$. Let $L$ and $m$ be as above. Then the equivalence $\Psi_1 :=
 T_C(L^{\otimes m})\circ \Psi$ sends the vector $(0,0,1)$ to  
 $$\Psi_1^{\rm CH}(0, \, 0, \, 1)\; = \; \big(r_0, \, p_2^2\, u\, l_0, \, \chi_0 \, +\, m\, l_0^2\, +\, r_0\, m^2
 \frac{l_0^2}{2}\big) \, := \, 
 (r_0, \, p_2^2 \, l_2, \, \chi_2)$$ where $l_2$ is a class in ${\rm CH}^1(C)$.
 Since $\langle v_0,v_0 \rangle_C=0$, we have that $l_0^2=2r_0\chi_0$ and hence
 $$\chi_2\, = \, \chi_0 \, +\,2 \, m \, r_0\, \chi_0 \, +\, m^2\, r_0^2 \, \chi_0.$$ 
 As $p_1$ divides $r_0$, but does not divide $\chi_0$, it follows that 
 $$\chi_2 \; \equiv \; \chi_0 \; \not \equiv \; 0\; \quad (\mbox{mod }\,p).$$ 
 Hence the composition $\sS_C \circ \Psi_1$ sends $(0,0,1)$ to
 $$\big(\sS_C \circ \Psi_1\big)^{\rm CH}(0,0,1) \; = \; \big(\chi_2, \,p_2^2 \, \omega, \, r_0) \, = \, 
 (\chi_2, \, \lambda^*\omega', \,r_0\big)$$ for some elements 
 $\omega \in {\rm CH}^1(C)$ and $\omega'\in{\rm CH}^1(E\times F)$, and $p_2={\rm deg}\, \nu$ does not divide $r_0$ (here we used the fact that 
 $\sS_{C}^{\rm CH}$
 induces an homomorphism ${\rm CH}^1(C)\rightarrow {\rm CH}^1(\widehat{C})$; \emph{cf}. \S\ref{autoeq}). 
 If necessary we can make the integer $\chi_2$ positive by composing with the shift functor.
 This concludes the proof of the claim.
 
 Let now $w=(s, \, h, \,\xi)$ be the Mukai vector of $\Xi_1^{\rm CH}(1,0,0)$
 so that $\langle w, v_1\rangle_C=1$.
 Moreover let $H$ be a line bundle whose numerical class is $h$ and note that 
 \begin{equation}\label{rightn2}
  I\, - \, r_1 \, \xi \, - \chi_1 \, s \, = \, 1\quad \mbox{ where } \quad I:=l_1\cdot h.
 \end{equation}
 Choose a positive integer $n$ such that 
$$n \, p_2^2 \,I \, + \, \chi_1\; \not \equiv 0 \quad (\mbox{mod } q) \quad \mbox{ for every prime divisor }q\neq p_2 \mbox{ of }r_1 \mbox{ that does not 
divide }I.$$
Set now  $\Xi_2:=T_C(H^{\otimes (n \,p_2^2)})\circ \Xi_1$ and set 
 $(r_2,l_2,\chi_2):=\Xi_2^{\rm CH}(0,0,1)$.  With a simple calculation we find
 \begin{equation}\label{chi6}
 r_2 \; = \; r_1, \quad l_2\; = \; \lambda^* l_1 \, + \, r_1 \, n \, p_2^2 \, h,\quad 
 \chi_2 \; = \; \chi_1 \, + \, n \, p_2^2 \, I \, + \,r_1 \, n^2 \, p_2^4 \, \frac{h^2}{2}.
 \end{equation}
 We note that $r_2$ is not divisible by $p$, and moreover that by \eqref{chi6} $\chi_2$ is not divisible by any prime divisor 
 $q\neq p_2$ of $r_1$ that does not divide $I$.
 On the other hand, if a prime divisor $q\neq p_2$ divided both $r_1$ and $I$, then by \eqref{rightn2} it does not divide $\chi_1$, and hence 
 neither $\chi_2$. Finally we prove that $\chi_2$ is not divisible by $p_2$ in case $p_2$ divides $r_1$. 
 But this follows by the construction of 
 $\Xi_1$, and by noting that in the case $(b)$ earlier discussed, $p_2$ does not divide $r_0$.
 
 Consider now the composition 
 $\Xi_3 \, := \, T_C(\lambda^*\Theta^{\otimes (r_2 \,d)})\circ \Xi_2$ where $\Theta:=\sO_E(O_E)\boxtimes \sO_F(O_F)$ 
 and $d\gg 0$ is a positive integer.
 By a direct computation we have that 
 $\Xi_3^{\rm CH}$ sends the vector $(0,0,1)$ to 
 $$v_3 \; := \; (r_2, \, l_2 \, + \, r_2^2 \, d\, \lambda^*\theta,\, \chi_3) $$
 where $\theta$ is the numerical class of $\Theta$ and  
 $$\chi_3 \; := \; \chi_2 \, +\, r_2\, d \,(l_2 \cdot \lambda^*\theta )\, + \, r_2^3 \,d^2 \, \frac{(\lambda^*\theta)^2}{2}.$$
 By Lemma \ref{Neron} and \eqref{chi6} we can write $l_2=\lambda^*l_3$ for some class $l_3\in {\rm CH}^1(E\times F)$, and hence 
 $$\chi_3 \, \equiv \, 
 \chi_2 \, + \, r_2 \, d \, p_2^c\, (l_3\cdot \theta) \, + \, r_2^3 \, p_2^c\, d^2\,\frac{\theta^2}{2} \, \equiv \, \chi_2 \quad (\mbox{mod }\, p_2)$$
 as the isogeny $\lambda$ has degree either $p_2^c$ with $c=1$ if $C=A$, and $c=3$ otherwise. 
 Therefore the first component of $v_3$ is still positive 
 and relatively prime with $p$, while the second component is a pull-back of an ample class in ${\rm CH}^1(E\times F)$ (for $d\gg 0$). 
 Moreover $\chi_3 \, \equiv \, \chi_2 \,(\mbox{mod }\,r_2)$ and hence 
 it is still relative prime with $r_2=r_1$. Our desired equivalence is hence given by $\Xi:=\Xi_3$. 
  \end{proof}

  \begin{rmk}
   In Proposition \ref{RelPrime2} the conclusion that $r$ is coprime with $p$ does not play any role towards the verification of the hypotheses 
   of Proposition \ref{cond&stat}. However it will used in the supersingular case in order to construct an equivalence satisfying the hypotheses
   of Proposition \ref{cond&stat}.
  \end{rmk}

 \noindent In order to prove that the hypotheses $(A_1)$ and $(A_2)$ of Proposition \ref{cond&stat} hold as well, we first prove a 
 couple of auxiliary results.
 \begin{prop}\label{liftpic}
  Let $E$ and $F$ be ordinary elliptic curves. 
  Then there exist projective liftings $\sE\rightarrow W$ and $\sF\rightarrow W$ of $E$ and $F$ 
  over the ring of Witt vectors respectively such that the restriction morphism 
  ${\rm Pic}(\sE\times_W\sF) \rightarrow  {\rm Pic}(E\times F)$
  is surjective.
 \end{prop}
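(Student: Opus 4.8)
The plan is to take $\sE\to W$ and $\sF\to W$ to be the \emph{canonical lifts} of the ordinary elliptic curves $E$ and $F$ over $W=W(k)$ furnished by Theorem \ref{canonicallift}, equipped with their Frobenius lifts $F_\sE$ and $F_\sF$. Since a product of ordinary abelian varieties is again ordinary, $E\times F$ is an ordinary abelian surface; and one checks directly that $(\sE\times_W\sF,\, F_\sE\times F_\sF)$ is a projective lift of $E\times F$ whose ``Frobenius'' is compatible with that of $W$ and restricts to the absolute Frobenius of $E\times F$. By the uniqueness clause of Theorem \ref{canonicallift}, $\sE\times_W\sF$ is therefore the canonical lift of $E\times F$, and the surjectivity statement of Theorem \ref{canonicallift}, applied now to the abelian \emph{surface} $E\times F$, gives exactly that ${\rm Pic}(\sE\times_W\sF)\to{\rm Pic}(E\times F)$ is surjective.

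For a more hands-on argument, compatible with \S\ref{mumlin} and useful for keeping explicit track of the lifted classes, given $L\in{\rm Pic}(E\times F)$ one writes $L\simeq L(\varphi,L_E,L_F)$ by Proposition \ref{Picardprod}, where $\varphi\colon F\to E$ may be taken to be a group homomorphism (possibly the zero map): indeed $\varphi$ is the composite of an isomorphism with the morphism $F\to\widehat E$ supplied by the universal property of $\widehat E$ in the proof of Proposition \ref{Picardprod}, which preserves origins and is hence a homomorphism. The functoriality clause of Theorem \ref{canonicallift} then lifts $\varphi$ to a homomorphism $\varphi_W\colon\sF\to\sE$ of canonical lifts with $(\varphi_W)_{|F}=\varphi$; since ${\rm Pic}(\sE)\to{\rm Pic}(E)$ and ${\rm Pic}(\sF)\to{\rm Pic}(F)$ are surjective, one lifts $L_E,L_F$ to $\widetilde{L}_E\in{\rm Pic}(\sE)$ and $\widetilde{L}_F\in{\rm Pic}(\sF)$; and the Mumford bundle lifts to $\sM_{\sE}:=\sO_{\sE\times_W\sE}(\Delta_{\sE})\otimes{\rm pr}_1^*\sO_\sE(-O_\sE)\otimes{\rm pr}_2^*\sO_\sE(-O_\sE)$ on $\sE\times_W\sE$, with $\Delta_\sE$ the relative diagonal and $O_\sE$ the image of the zero section. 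Then $\widetilde L:=(1_\sE\times\varphi_W)^*\sM_\sE\otimes{\rm pr}_\sE^*\widetilde{L}_E\otimes{\rm pr}_\sF^*\widetilde{L}_F$ restricts on the closed fiber to $L(\varphi,L_E,L_F)\simeq L$, because formation of the diagonal, the zero section, pullbacks and tensor products all commute with restriction to the closed fiber.

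I do not expect a genuine obstacle here: the whole content is the existence, uniqueness and functoriality of canonical lifts of ordinary abelian varieties, all quoted in Theorem \ref{canonicallift}. The only small point requiring attention is the verification that the morphism $\varphi$ of Proposition \ref{Picardprod} can be arranged to be a homomorphism, so that the functoriality clause of Theorem \ref{canonicallift} applies to it — which, as just noted, is immediate from the proof of Proposition \ref{Picardprod}.
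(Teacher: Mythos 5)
Your first paragraph reproduces the paper's own argument: the paper takes the canonical lift of the ordinary abelian surface $E\times F$, invokes the surjectivity of ${\rm Pic}$ restriction from Theorem \ref{canonicallift}, and identifies this canonical lift with $\sE\times_W\sF$ for the canonical lifts of the factors, which is precisely your identification via the uniqueness/compatibility of Frobenius lifts. Your second, more explicit argument via Proposition \ref{Picardprod}, functoriality of canonical lifts and a lifted Mumford bundle is a correct but unneeded supplement; the short argument already suffices.
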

 
\begin{proof}
 The product $E\times F$ is an ordinary abelian surface so that we can consider its canonical lift $(\sY\rightarrow W, F_{\sY})$, by virtue of  
 Theorem \ref{canonicallift}. Moreover the restriction morphism ${\rm Pic}(\sY)\rightarrow {\rm Pic}(E\times F)$ is surjective. 
 However by using the universal property of the fiber product, it is immediate to show that $\sY\simeq \sE\times_W\sF$
 where $\sE\rightarrow W$ and $\sF\rightarrow W$ are the canonical lifts of $E$ and $F$ respectively.
\end{proof}
  
  \begin{prop}\label{liftpic2}
 If $E$ is an ordinary elliptic curve and $F$ is supersingular, then any projective lifts 
 $\sE\rightarrow W$ and $\sF\rightarrow W$ of 
 $E$ and $F$ respectively are such that the restriction morphism ${\rm Pic}(\sE\times_W \sF)\rightarrow {\rm Pic}(E\times F)$ is surjective.
 \end{prop}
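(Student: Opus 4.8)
The plan is to reduce everything to line bundles on the two curves separately. The key structural input is that, because $E$ is ordinary while $F$ is supersingular, there are no nonzero homomorphisms $F\rightarrow E$, so ${\rm Pic}(E\times F)$ contains only external products ${\rm pr}_E^*L_E\otimes {\rm pr}_F^*L_F$; such line bundles lift to \emph{any} projective lift $\sE\times_W\sF$ because smooth curves and their line bundles always lift over $W$.

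First I would fix an arbitrary $L\in {\rm Pic}(E\times F)$ and invoke Proposition \ref{Picardprod} to write $L\simeq L(\varphi,L_E,L_F)$ for a morphism $\varphi:F\rightarrow E$ and line bundles $L_E\in {\rm Pic}(E)$, $L_F\in {\rm Pic}(F)$. Writing $\varphi$ as a translation composed with a homomorphism $h:F\rightarrow E$, I would observe that $h$ must vanish: a nonzero $h$ would be an isogeny, but isogenous elliptic curves are simultaneously ordinary or simultaneously supersingular (\emph{cf}. \cite{Si}), contradicting our hypotheses on $E$ and $F$. Hence $\varphi$ is the constant map at some point $c\in E$, so $1_E\times \varphi$ factors through ${\rm pr}_E$ and $(1_E\times \varphi)^*\sM_E\simeq {\rm pr}_E^*\big(\sM_E|_{E\times\{c\}}\big)$. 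Setting $L_E':=L_E\otimes \big(\sM_E|_{E\times\{c\}}\big)\in {\rm Pic}(E)$ we obtain $L\simeq {\rm pr}_E^*L_E'\otimes {\rm pr}_F^*L_F$, an external product.

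Next I would lift each factor: by Grothendieck's existence theorem there are $\widetilde{L_E'}\in {\rm Pic}(\sE)$ and $\widetilde{L_F}\in {\rm Pic}(\sF)$ restricting to $L_E'$ and $L_F$ respectively. Since the projections from $\sE\times_W\sF$ restrict on the closed fiber $(\sE\times_W\sF)_k=E\times F$ to the projections of $E\times F$, the line bundle ${\rm pr}_\sE^*\widetilde{L_E'}\otimes {\rm pr}_\sF^*\widetilde{L_F}$ on $\sE\times_W\sF$ restricts to ${\rm pr}_E^*L_E'\otimes {\rm pr}_F^*L_F\simeq L$. As $L$ was arbitrary, the restriction morphism is surjective.

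The only point requiring care — and thus the main obstacle — is the vanishing ${\rm Hom}(F,E)=0$; this is precisely what forces $L$ to be an external product, and it is what makes the conclusion hold for \emph{every} lift, in contrast to Proposition \ref{liftpic} where passing to canonical lifts was essential. The remaining steps are formal and rely only on the background already collected above.
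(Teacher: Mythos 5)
Your proof is correct and follows essentially the same route as the paper: both arguments rest on the vanishing of homomorphisms between an ordinary and a supersingular elliptic curve, which forces every line bundle on $E\times F$ to be an external product ${\rm pr}_E^*L_E'\otimes {\rm pr}_F^*L_F$, and then lift each factor to the given lifts $\sE$ and $\sF$ (possible since line bundles on curves lift) and take the external product on $\sE\times_W\sF$. Your derivation of the external-product decomposition via Proposition \ref{Picardprod} is just a more explicit version of the paper's appeal to ${\rm Pic}(E\times F)\simeq {\rm Pic}(E)\times{\rm Pic}(F)$.
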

 
 \begin{proof}
As ${\rm Hom}(E,F)=0$ we obtain an isomorphism ${\rm Pic}(E\times F)\simeq {\rm Pic}(E)\times {\rm Pic}(F)$.
Let now ${\rm pr}_E^*L_E\otimes {\rm pr}_F^*L_F$ be an arbitrary line bundle on $E\times F$.  
Since line bundles on curves lift, we can consider lifts $\mathcal{L}_{\sE}$ and $\mathcal{L}_{\sF}$ of $L_E$ and $L_F$  
respectively. Hence the line bundle ${\rm pr}_{\sE}^*\mathcal{L}_{\sE}\otimes {\rm pr}_{\sF}^*\mathcal{L}_{\sF}$ on $\sE\times_W\sF$
is a lift of ${\rm pr}_E^*L_E\otimes {\rm pr}_F^*L_F$.
 \end{proof}

\begin{prop}\label{propord}
Assume the assumptions of Setting \ref{sett} and let $\Phi:\rd(B)\rightarrow \rd(A)$ be an equivalence of derived categories of abelian surfaces.
If both $E$ and $F$ are ordinary elliptic curves,
 then the hypotheses of Proposition \ref{cond&stat} hold.
\end{prop}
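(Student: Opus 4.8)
The plan is to verify, one at a time, each of the hypotheses of Proposition \ref{cond\&stat} under the running assumption that $E$ and $F$ are both ordinary. The existence of an equivalence $\Psi:\rd(B)\rightarrow \rd(C)$ with $C\in\{A,\widehat A\}$ satisfying $(E_1)$, $(E_2)$, $(E_3)$ is exactly the content of Proposition \ref{RelPrime2}, which applies since $\nu$ is either an isomorphism or has $\deg\,\nu\geq 2$; in the first (isomorphism) case one can argue directly (or use Proposition \ref{RelPrime3}), and in the remaining cases Proposition \ref{RelPrime2} moreover tells us that the ample class $l$ is the pull-back via $\lambda$ (where $\lambda=\nu$ if $C=A$ and $\lambda=\widehat\mu$ otherwise) of an ample class on $E\times F$. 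This last refinement is the crucial bookkeeping: it will let us produce a lift of the polarization.

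Next I would choose the lifts $\sE\rightarrow W$ and $\sF\rightarrow W$. Since both curves are ordinary, Proposition \ref{liftpic} provides liftings with the property that ${\rm Pic}(\sE\times_W\sF)\rightarrow {\rm Pic}(E\times F)$ is surjective. Now I verify $(A_1)$. By Proposition \ref{liftetale} the isogeny $\lambda$ lifts to $\lambda_W:\sC\rightarrow \sE\times_W\sF$. Let $L$ be an ample line bundle on $C$ with class $l$; by Proposition \ref{RelPrime2} we may write $l=\lambda^*m$ for some ample class $m$ on $E\times F$, represented by a line bundle $M$. Choose a lift $\widetilde M\in {\rm Pic}(\sE\times_W\sF)$ of $M$ using the surjectivity just cited, and set $\widetilde L:=\lambda_W^*\widetilde M\in {\rm Pic}(\sC)$. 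Since $\lambda_W$ specializes to $\lambda$, the restriction $\rho(\widetilde L)$ is $\lambda^*M$, whose class is $l$; after adjusting by a line bundle algebraically equivalent to zero (which also lifts, $\sC$ being an abelian scheme over $W$), we get $\rho(\widetilde L)\simeq L$, so $L$ lies in the image of $\rho$, which is $(A_1)$.

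Finally, $(A_2)$ concerns only the case $\deg\,\nu=3$ with $F$ carrying an automorphism of order three, and it requires ${\rm rk\,CH}^1(\sC_\eta)\in\{2,4\}$ for the geometric generic fiber. Here I would use that the Picard number is preserved: the lift $\sC\to W$ we built is $\lambda_W$-related to the canonical lift $\sE\times_W\sF$, and since $\lambda$ is an isogeny, $\lambda_\eta:\sC_\eta\to\sE_\eta\times\sF_\eta$ is an isogeny over the generic fibre, so ${\rm rk\,NS}(\sC_\eta)={\rm rk\,NS}(\sE_\eta\times\sF_\eta)$. For the canonical lift of an ordinary elliptic curve the generic fibre has no complex multiplication in the relevant situations, or at worst the same endomorphism algebra structure governing the Néron--Severi rank as downstairs, so ${\rm rk\,NS}(\sC_\eta)={\rm rk\,NS}(C)={\rm rk\,NS}(A)\in\{2,4\}$ by hypothesis; this is precisely where the hypothesis ${\rm rk\,NS}(A)\in\{2,4\}$ in Theorem \ref{MTg} is being fed through. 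I expect this last point to be the main obstacle: one must be careful that lifting an elliptic curve together with an order-three automorphism (and, for the dual case, a principal polarization) does not accidentally change the Picard number of the product, and for supersingular-free ordinary input the canonical-lift theory (Theorem \ref{canonicallift}) is exactly the tool that keeps everything rigid. Having checked $(E_1)$–$(E_3)$ and $(A_1)$–$(A_2)$, all hypotheses of Proposition \ref{cond\&stat} hold, which is the assertion.
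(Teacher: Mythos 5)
Your treatment of $(E_1)$--$(E_3)$ and of $(A_1)$ is essentially the paper's: Propositions \ref{RelPrime3} and \ref{RelPrime2} give the equivalence with vector $(r,\lambda^*l,\chi)$, Proposition \ref{liftpic} supplies (canonical) lifts $\sE,\sF$ with ${\rm Pic}(\sE\times_W\sF)\to{\rm Pic}(E\times F)$ surjective, and pulling a lift of a representative of $l$ back along the lifted isogeny $\lambda_W$ of Proposition \ref{liftetale} gives $(A_1)$; your extra adjustment by a line bundle algebraically equivalent to zero is harmless but unnecessary, since one may simply take $\lambda^*L$ itself as the ample representative to be lifted.

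The genuine gap is in $(A_2)$. You deduce ${\rm rk}\,{\rm CH}^1(\sC_\eta)\in\{2,4\}$ from ``the hypothesis ${\rm rk}\,{\rm NS}(A)\in\{2,4\}$ in Theorem \ref{MTg}'', but Theorem \ref{MTg} has no such hypothesis: in the degree-three case its assumption is that $F$ admits an automorphism of groups of order three, while the constraint ${\rm rk}\,{\rm NS}\in\{2,4\}$ lives in Sosna's characteristic-zero statement (Theorem \ref{MTgCori}), which is precisely why $(A_2)$ is a condition on the geometric generic fiber and must be \emph{proved}, not quoted. Moreover the equality ${\rm rk}\,{\rm NS}(\sC_\eta)={\rm rk}\,{\rm NS}(C)$ you invoke is not automatic: specialization only gives ${\rm rk}\,{\rm NS}(\sC_\eta)\le {\rm rk}\,{\rm NS}(C)$, and for a badly chosen lift the rank can drop, e.g.\ from $4$ to $3$ if $\sE_\eta$ and $\sF_\eta$ remain isogenous but lose their complex multiplication --- exactly the excluded value. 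The mechanism the paper uses, and which your argument omits, is the functoriality in Theorem \ref{canonicallift}: the order-three automorphism of $F$ lifts to the canonical lift $\sF$, so $\sF_\eta$ carries an automorphism of groups of order three and hence has complex multiplication; therefore ${\rm Hom}(\sE_\eta,\sF_\eta)$ has rank $0$ or $2$ (rank $1$ would mean the curves are isogenous with endomorphism algebra $\mathbf{Q}$, impossible when one factor has CM), so ${\rm rk}\,{\rm CH}^1(\sE_\eta\times\sF_\eta)\in\{2,4\}$, and the separable isogeny $\lambda_\eta$ transfers this to $\sC_\eta$. Finally, your aside that the canonical lift ``has no complex multiplication in the relevant situations'' is backwards: the canonical lift of an ordinary elliptic curve is exactly the lift carrying the same (CM) endomorphism ring, and that is the feature the whole verification of $(A_2)$ rests on.
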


\begin{proof}
By Propositions \ref{RelPrime3} and \ref{RelPrime2} there exists an equivalence 
$\Xi:\rd(B)\rightarrow \rd(C)$ with $C\in\{A,\widehat{A}\}$ such that the vector $\Xi^{\rm CH}(0,0,1)=(r,\lambda^*l,\chi)$ satisfies the 
hypotheses $(E_1)$, $(E_2)$ and $(E_3)$ of Proposition \ref{cond&stat}. 
Let $L$ be a line bundle on $E\times F$ with class $l\in {\rm CH}^1(E\times F)$.
By Propositions \ref{liftpic} there exist
projective lifts $\sE\rightarrow W$ and $\sF\rightarrow W$ of $E$ and $F$ to the ring of Witt vectors $W$ respectively,  and
a lift $\mathcal{L}\in {\rm Pic}(\sE\times_W\sF)$ of $L$. 
Let $\lambda_W:\mathcal{C}\rightarrow \sE\times _W\sF$ be the lift of $\lambda :C\rightarrow E\times F$ defined 
by Proposition \ref{liftetale}. It follows that $\lambda_W^*\mathcal{L}$ lifts $\lambda^*L$ which proves condition $(A_1)$. 
In order to prove $(A_2)$ we can assume 
that $F$ has an automorphism of groups of order three. 
Then by Theorem \ref{canonicallift} the geometric generic fiber $\sF_{\eta}$ 
admits an automorphism of groups of order three and hence ${\rm rk \, CH}^1 (\sE_{\eta}\times 
\sF_{\eta})\in \{2,4\}$ where $\sE_{\eta}$ is the geometric generic fiber of the lift $\sE\rightarrow W$.
\end{proof}

\begin{prop}
Assume the assumptions of Setting \ref{sett} and let $\Phi:\rd(B)\rightarrow \rd(A)$ be an equivalence of derived categories of abelian surfaces.
If $E$ is ordinary and $F$ is supersingular or \emph{viceversa},
 then the hypotheses of Proposition \ref{cond&stat} hold.
\end{prop}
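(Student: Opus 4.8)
The plan is to follow the same template as in the proof of Proposition \ref{propord}, modifying only the lifting inputs to account for the mixed ordinary/supersingular situation. First I would invoke Propositions \ref{RelPrime3} and \ref{RelPrime2} (exactly as before) to produce an equivalence $\Xi:\rd(B)\rightarrow \rd(C)$ with $C\in\{A,\widehat A\}$ whose Mukai vector $\Xi^{\rm CH}(0,0,1)=(r,\lambda^*l,\chi)$ satisfies the numerical conditions $(E_1)$, $(E_2)$, $(E_3)$ of Proposition \ref{cond&stat}; here $\lambda=\nu$ if $C=A$ and $\lambda=\widehat\mu$ otherwise, and $l\in{\rm CH}^1(E\times F)$ is an ample class, so we may pick a line bundle $L$ on $E\times F$ representing it. The work is then to check the lifting conditions $(A_1)$ and $(A_2)$.

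For $(A_1)$, the key difference from Proposition \ref{propord} is that the product $E\times F$ is no longer an ordinary abelian surface (since $F$ is supersingular), so the canonical lift argument of Proposition \ref{liftpic} is unavailable. Instead I would use Proposition \ref{liftpic2}: choose the canonical lift $\sE\rightarrow W$ of the ordinary curve $E$ and an arbitrary projective lift $\sF\rightarrow W$ of the supersingular curve $F$ (which exists since smooth curves lift). Since ${\rm Hom}(E,F)=0$, Proposition \ref{liftpic2} shows that the restriction map ${\rm Pic}(\sE\times_W\sF)\rightarrow{\rm Pic}(E\times F)$ is surjective, so $L$ lifts to some $\mathcal L\in{\rm Pic}(\sE\times_W\sF)$. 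Letting $\lambda_W:\mathcal C\rightarrow\sE\times_W\sF$ be the lift of $\lambda$ furnished by Proposition \ref{liftetale}, the line bundle $\lambda_W^*\mathcal L$ lifts $\lambda^*L$, which is the ample class appearing in the Mukai vector; this verifies $(A_1)$.

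For $(A_2)$ I would observe that this condition is only invoked when ${\rm deg}\,\nu=3$ and $F$ admits an automorphism of groups of order three. But an elliptic curve with an automorphism of order three has $j$-invariant zero; in characteristic $p>3$ such a curve is supersingular precisely when $p\equiv 2\pmod 3$, and in any case the case at hand is where exactly one of $E,F$ is supersingular. If $F$ is the supersingular curve carrying the order-three automorphism, then by the Deuring-type lifting result (\emph{cf}. \cite{Oo}) one can lift $F$ together with its automorphism — and, if needed, a principal polarization — to a ramified extension of $W$, so the geometric generic fiber $\sF_\eta$ again carries an order-three automorphism and ${\rm rk\,CH}^1(\sE_\eta\times\sF_\eta)\in\{2,4\}$; if instead $F$ is ordinary and carries the automorphism, the canonical lift of $F$ preserves it by Theorem \ref{canonicallift}, exactly as in Proposition \ref{propord}. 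Either way $(A_2)$ holds. The main obstacle I anticipate is the bookkeeping in $(A_2)$: one must be careful that the lift of $F$ chosen in verifying $(A_1)$ (an arbitrary lift, via Proposition \ref{liftpic2}) can simultaneously be taken to carry the order-three automorphism, which forces one to invoke the Deuring lifting of the \emph{pair} $(F,\text{automorphism})$ rather than an arbitrary lift, and to check this is compatible with the surjectivity statement of Proposition \ref{liftpic2} (which holds for \emph{any} lift of $F$, so there is in fact no conflict). With $(A_1)$ and $(A_2)$ in hand, Proposition \ref{cond&stat} applies and Theorem \ref{MTg} follows.
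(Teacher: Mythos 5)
Your proposal is correct, and its first half (producing $\Xi$ via Propositions \ref{RelPrime3} and \ref{RelPrime2}, then verifying $(A_1)$ through Proposition \ref{liftpic2} and pulling back the lifted line bundle along $\lambda_W$) coincides with the paper's argument. Where you diverge is in the verification of $(A_2)$: you lift the order-three automorphism of $F$ (by Deuring's result when $F$ is supersingular, by Theorem \ref{canonicallift} when it is ordinary) and then argue that the generic fiber $\sF_\eta$ has CM, so ${\rm rk\,CH}^1(\sE_\eta\times\sF_\eta)\in\{2,4\}$. The paper instead sidesteps the automorphism entirely: since Proposition \ref{liftpic2} imposes no constraint on which lifts of $E$ and $F$ are used, it chooses $\sE$ and $\sF$ so that there are no non-trivial morphisms between $\sE_\eta$ and $\sF_\eta$, which forces ${\rm rk\,NS}(\sE_\eta\times\sF_\eta)=2$ and hence ${\rm rk\,CH}^1(\sC_\eta)=2$ regardless of whether $F$ carries the automorphism. (In fact, in this mixed case the vanishing ${\rm Hom}(\sE_\eta,\sF_\eta)=0$ holds for \emph{any} lifts, because homomorphisms of abelian schemes specialize injectively and ${\rm Hom}(E,F)=0$ when one curve is ordinary and the other supersingular; so the paper's ``general'' choice is really automatic.) Your route is sound — the compatibility worry you raise is indeed moot, for exactly the reason you give — but it imports the Deuring lifting machinery that the paper only needs in the case where both curves are supersingular; the paper's choice-of-lifts trick is lighter and makes $(A_2)$ hold with rank exactly $2$, no CM analysis required. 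Do make explicit, as in the supersingular case of the paper, the final step passing from ${\rm rk\,CH}^1(\sE_\eta\times\sF_\eta)$ to ${\rm rk\,CH}^1(\sC_\eta)$ via invariance of the N\'eron--Severi rank under separable isogeny.
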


\begin{proof}
By Propositions \ref{RelPrime3} and \ref{RelPrime2} there exists an equivalence 
$\Xi:\rd(B)\rightarrow \rd(C)$ with $C\in\{A,\widehat{A}\}$ such that the vector $\Xi^{\rm CH}(0,0,1)=(r,\lambda^*l,\chi)$ satisfies the 
hypotheses $(E_1)$, $(E_2)$ and $(E_3)$ of Proposition \ref{cond&stat}. 
 By Proposition \ref{liftpic2}
 we can choose general lifts $\sE$ and $\sF$ of $E$ and $F$ to $W$ respectively 
 and a line bundle $\mathcal{L}\in {\rm Pic}(\mathcal{C})$
such that the condition $(A_1)$ holds (see the argument of Proposition \ref{propord}).
In fact we choose $\sE$ and $\sF$ so that there are no non-trivial morphisms between the geometric 
generic fibers $\sE_{\eta}$ and $\sF_{\eta}$ of $\sE$ and $\sF$ respectively. 
It follows that ${\rm rk \, NS}(\sE_{\eta}\times \sF_{\eta})=2$ independently from the fact that $F$ admits an automorphism of groups 
of order three or not. As in Proposition \ref{propord} this immediately implies that ${\rm rk \, NS}(\sC_{\eta})=2$.
 \end{proof}

\subsection{Supersingular case}
We are going to prove that the hypotheses of Proposition \ref{cond&stat} hold when the elliptic curves $E$ and $F$ are both supersingular.
The following proposition proves the conditions $(E_1)$, $(E_2)$ and $(E_3)$.

\begin{prop}\label{chiandr2}
 Assume the hypotheses of Setting \ref{sett} with ${\rm deg}\,\nu\geq 2$ and assume that the elliptic curves $E$ and $F$ are supersingular. 
 Moreover let $\Phi:\rd(B)\rightarrow \rd(A)$ be an equivalence of derived categories of abelian surfaces.
 Then there exists an equivalence 
 $\Xi:\rd(B)\rightarrow \rd(C)$ with $C\in \{A,\widehat{A}\}$ such that the Mukai vector 
 $$(r, \, l, \, \chi) \; := \; \Xi^{\rm CH}(0, \,0, \,1)$$ satisfies the following conditions:
 \begin{enumerate}
  \item $r$ is positive and relatively prime with $p$;\\
  \item the class $l\in NS(C)$ is ample; \\
  \item $\chi$ is relatively prime with $r$.
 \end{enumerate}
Set now $\lambda=\nu$ if $C=A$, and $\lambda=\widehat \mu$ otherwise. Then $l=\lambda^*l'$ 
where $l'=l(\varphi,d_1,d_2)$ is the class of a line bundle on $E\times F$ with $\varphi$ an \'{e}tale isogeny.
 
 \end{prop}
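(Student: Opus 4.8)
The plan is to start from the equivalence furnished by Proposition \ref{RelPrime2}, whose $l$-component is a priori only the pull-back along $\lambda$ of some ample class $l_0 \in {\rm CH}^1(E \times F)$, and to correct it so that, after writing $l_0 = l(\varphi_0, a_0, b_0)$ by means of Proposition \ref{Picardprod}, the homomorphism $\varphi_0 \colon F \to E$ --- which may a priori be inseparable, or even zero --- is replaced by an \emph{\'{e}tale} isogeny. This is exactly the point where the supersingularity of $E$ and $F$ enters: by Proposition \ref{koh} there is a \emph{separable} isogeny $\xi \colon F \to E$, and, thanks to Corollary \ref{corPicardprod}, adding a suitable integer multiple of $\xi$ to $\varphi_0$ is realized on numerical Chow rings simply by tensoring with a line bundle, i.e.\ by composing with an autoequivalence of $\rd(C)$.

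Concretely, I would first invoke Propositions \ref{RelPrime3} and \ref{RelPrime2} to obtain an equivalence $\Xi_0 \colon \rd(B) \to \rd(C)$ with $C \in \{A, \widehat{A}\}$ and $\Xi_0^{\rm CH}(0,0,1) = (r, \lambda^* l_0, \chi_0)$, where $r > 0$ is coprime to $p$, $\chi_0$ is coprime to $r$, and $l_0$ is the class of an ample line bundle on $E \times F$ (with $\lambda = \nu$ or $\widehat{\mu}$ according to whether $C = A$). Writing $l_0 = l(\varphi_0, a_0, b_0)$ as above with $\varphi_0$ a homomorphism, I then pick a separable isogeny $\xi \colon F \to E$ by Proposition \ref{koh}, fix a line bundle $N$ on $E \times F$ with numerical class $n := l(\xi, 0, 0)$, and, for an integer $m_0 > 0$ to be chosen, set $\Xi_1 := T_C\big((\lambda^* N)^{\otimes r m_0}\big) \circ \Xi_0$. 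By \eqref{actionH} the Mukai vector becomes $\big(r, \lambda^*(l_0 + r^2 m_0 n), \chi_1\big)$, with $\chi_1 \equiv \chi_0 \pmod{r}$ because the tensor exponent is a multiple of $r$, so $\chi_1$ remains coprime to $r$; and by Corollary \ref{corPicardprod} one has $l_0 + r^2 m_0 n = l(\varphi_1, a_0, b_0)$ with $\varphi_1 := \varphi_0 + r^2 m_0 \xi$. The crucial observation is then that $d\varphi_1 = d\varphi_0 + r^2 m_0 \, d\xi$ in the one-dimensional $k$-vector space ${\rm Hom}({\rm Lie}\,F, {\rm Lie}\,E)$, where $d\xi \neq 0$ because $\xi$ is separable and $p \nmid r$; hence, since $p > \deg \nu \geq 2$, one can choose $m_0 > 0$ coprime to $p$ with $d\varphi_1 \neq 0$ (this rules out at most one further residue of $m_0$ modulo $p$), and then $\varphi_1$ is a nonzero homomorphism with nonzero differential, i.e.\ an \'{e}tale isogeny.

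To conclude, I would arrange ampleness exactly as in the last step of the proof of Proposition \ref{RelPrime2}: set $\Theta := \sO_E(O_E) \boxtimes \sO_F(O_F)$, an ample line bundle on $E \times F$ with numerical class $\theta = l(0,1,1)$ (the $\varphi = 0$ case of \eqref{Picard}), and put $\Xi := T_C\big((\lambda^* \Theta)^{\otimes r d}\big) \circ \Xi_1$ for $d \gg 0$. Then $\Xi^{\rm CH}(0,0,1) = (r, \lambda^* l', \chi)$ with $\chi \equiv \chi_1 \pmod{r}$ still coprime to $r$ and, by Corollary \ref{corPicardprod}, $l' = l_0 + r^2 m_0 n + r^2 d\, \theta = l(\varphi_1,\, a_0 + r^2 d,\, b_0 + r^2 d)$. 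For $d \gg 0$ the class $l'$ is ample --- a large multiple of the ample class $\theta$ added to a fixed class --- so $l = \lambda^* l'$ is ample as the pull-back of an ample class along the finite isogeny $\lambda$; and since the last step only added the zero homomorphism, $\varphi := \varphi_1$ stays \'{e}tale, while $d_1 := a_0 + r^2 d$ and $d_2 := b_0 + r^2 d$ give the desired presentation $l' = l(\varphi, d_1, d_2)$.

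The step I expect to be the main obstacle is the production of the separable isogeny $\xi$ together with the verification that $\varphi_0 + r^2 m_0 \xi$ is \'{e}tale for an admissible $m_0$; this is the only place where supersingularity is used (through the connectedness of the supersingular isogeny graph underlying Proposition \ref{koh}) and is what genuinely distinguishes this case from the ordinary one handled in Proposition \ref{RelPrime2}. The rest is bookkeeping closely modeled on that proposition.
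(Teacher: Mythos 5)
Your argument is correct, and its skeleton is the paper's: supersingularity enters only through Proposition \ref{koh}, the defect of $\varphi_0$ is repaired by adding a multiple of the separable isogeny $\xi$ via a twist by a bundle with class $\lambda^*l(\xi,0,0)$ (Corollary \ref{corPicardprod}), and ampleness is arranged at the end by twisting with a large power of $\lambda^*\Theta$. Where you diverge is in the bookkeeping, and your version is arguably leaner: the paper does not quote Proposition \ref{RelPrime2} as a black box but re-runs its proof, because its $\xi$-twist has exponent $1$ and destroys the control of $\chi$ modulo $r$; it must then restore coprimality by a second twist $T_C(H^{\otimes np\,p_2^2})$, using Proposition \ref{Neron} to keep the class in the image of $\lambda^*$ and the factor $p$ in the exponent to keep the added morphism inseparable, hence harmless for separability. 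Your choice of exponents divisible by $r$ freezes $\chi$ modulo $r$ from the start and makes that whole repair step unnecessary. Likewise, the paper splits into cases ($\varphi$ inseparable or zero versus separable) and uses the fact that an inseparable plus a separable isogeny is separable, which works there because the added term is $r_1\xi$ with $r_1$ prime to $p$ and nothing is added in the separable case; since your added term $r^2m_0\xi$ must be nonzero in all cases, you need your tangent-space argument choosing $m_0$ modulo $p$ to exclude the one cancelling residue, and this uses $p\geq 3$, which holds since $p>\deg\nu\geq 2$. Both mechanisms are sound; yours trades the case analysis for a uniform differential criterion and saves one twist.
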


\begin{proof}
 Proceeding as in the proof of Proposition \ref{RelPrime2}, we can construct an equivalence
 $\Psi_1  :  \rd(B)  \rightarrow  \rd(C)$ where 
 $C\in \{A,\widehat{A}\}$ such that the vector 
 $$v_1 \; := \; (r_1, \, \lambda^*l_1,\, \chi_1) \; = \; \Psi_1^{\rm CH}(0,0,1)$$ 
 satisfies: $r_1$ is relatively prime with $p$, and $l_1=l(\varphi,m_1,m_2)$ is the class of a line bundle on $E\times F$ for some morphism 
 $\varphi:F\rightarrow E$ and integers $m_1,m_2$ (see Proposition \ref{Picardprod}). We notice that we can assume $r_1$ positive by eventually 
 composing with the shift functor.
 Suppose first that $\varphi$ is either the constant morphism or a non-separable one.
 Let $t\neq p$ be a prime and let $\xi:F\rightarrow E$ be a separable isogeny of degree $t^k$ ($k\in \rz_{>0}$) determined by 
 Proposition \ref{koh}.
 Consider now the composition of equivalences $\Psi_2:=T_C\big(\lambda^*(1_E\times \xi)^*\sM_E\big) \circ \Psi_1:\rd(B)\rightarrow \rd(C)$. 
 As the class of $(1_E\times \xi)^*\sM_E$ is $l(\xi,0,0)$, by Corollary \ref{corPicardprod} we have that 
 $\Psi_2^{\rm CH}$ sends the vector $(0,0,1)$ to 
 $$v_2\; := \; \big(r_1, \, \lambda^*l(\varphi \, +\, r_1 \, \xi,\,  m_1  , \, m_2), \, \chi_2\big)$$ for some integer $\chi_2$. 
 Recall that if $f_1$ and $f_2$ are two isogenies of elliptic curves with $f_1$ inseparable, then $f_1+f_2$ is separable if and only if $f_2$ is 
 separable.
 Then as $r_1$ is relatively prime with $p$, the isogeny $\gamma:=\varphi \, + \,r_2 \, \xi$ is separable 
 (indeed the composition of separable isogenies is separable, 
 and the multiplication-by-$n$ map 
 is separable if and only if $n$ is coprime with $p$), and hence \'{e}tale.
 
 Let $w:=(s, h,\zeta)=\Psi_2^{\rm CH}(1,0,0)$ and note that $\langle v_2, w\rangle_C=1$. 
 By Proposition \ref{Neron} the class $p_2^2 \, h$ (where $p_2={\rm deg}\nu$) 
 is in the image of the pull-back $\lambda^*:{\rm CH}^1(E\times F)\rightarrow {\rm CH}^1(C)$.
 Therefore we can write 
 $$p_2^2 \, h \; = \;\lambda^*l(\psi,\, k_1, \, k_2)$$ for some morphism $\psi$ and integers $k_1,k_2$.  Consider the equivalence 
 $\Psi_3 := T_C(H^{\otimes (n \, p\, p_2^2)})\circ \Psi_2$ which sends the vector $(0,0,1)$ to 
 $$v_3 \; := \; \Big(r_1, \, \lambda^*l \big(\gamma \, +\, r_1 \, p\, n\, \psi, \, m_1 \, +\, r_1 \, p\, n\, k_1, \, m_2\, +\, r_1\, p\, n\, k_2 
 \big), \, \chi_2 \, +\, p\, p_2^2 \, n\, I \, + \, r_1\, p^2 \, p_2^4\, n^2\, \frac{h^2}{2}\Big)$$ 
 where $$I\; := \: \lambda^*(\gamma,\, m_1,\, m_2) \cdot h.$$
 As $(r_1 \, p\, n)\psi$ is either the zero morphism or a non-separable one, 
 we have that $\gamma \, + \, (r_1 \, 
 p \, n\, )\psi$ 
 is a separable isogeny. 
 Choose now a positive integer $n$ so that 
 $$p \, p_2^2 \, n\, I\, +\, \chi_2 \; \not \equiv \; 0 \quad (\mbox{mod }q) \quad \quad \mbox{for every prime divisor } 
 q\neq p_2\mbox{ of }r_2\mbox{ that does not divide }I. $$
 With the same argument of Proposition \ref{RelPrime2}, this choice of $n$ ensures that the third components of $v_3$ 
 is relatively prime with $r_1$.
 
 Now we define the line bundle $\Theta:=\sO_E(O_E)\boxtimes \sO_F(O_F)$ on $E\times F$ and we consider the 
 equivalence $T_C(\lambda^*\Theta^{\otimes (r_1 \, d)})\circ \Psi_3$ with $d\gg 0$ a positive integer. 
 By setting $\theta:=l(0_E,1,1)$ for the class of $\Theta$, 
 the equivalence $T_C(\lambda^*\Theta^{\otimes (r_1 \, d)})\circ \Psi_3$ sends $(0,0,1)$ to 
 $$v_4 \; = \; \Big(r_1, \, \lambda^*(\gamma \, +\, r_1\, n \, p \, \psi,m_1 \, +\, r_1\, p\, n\, k_1\, +\, r_1^2\, d, \, m_2\, +\, 
 r_2 \, p\, n\, k_2 \, +\, r_1^2 \,d), \chi_4\Big)
 $$
 where 
 $$\chi_4 :=  \chi_2 \, +\, p\, p_2^2 \, n\, I \, + \, r_1 \, p^2 \, p_2^4 \, n^2\, \frac{h^2}{2} \, +\, r_1 \, d 
 \big(\theta \cdot \lambda^*l(\gamma \, +\, r_1 \, p\, n \, \psi,  m_1 \, +\, r_1 \, p\, n\, k_1,   m_2 \, +\, r_1\, p\, n\, k_2)\big) \, +\, 
 r_1^3 \, d^2 \, \frac{\theta^2}{2}.$$
 As $\chi_4$ is congruent to the third component of $v_3$ modulo $r_1$, we have that $\chi_4$ is still relatively prime with $r_1$. Moreover, 
 for $d$ large the second component of $v_4$ is ample. 
 Finally $r_1$ is relatively prime with $p$, and hence $T_C(\lambda^*\Theta^{\otimes (r_1 \,d)})\circ \Psi_3$
 is the equivalence we are looking for.
 
 To conclude the proof we need to analyze the case when $\varphi$ is separable. 
 But in this case the proof is simpler as there is no need to introduce 
 the isogeny $\xi$ and the equivalence $\Psi_2$. Then it is enough to set $\gamma=\varphi$ and proceed as in the inseparable case.
\end{proof}

\begin{prop}
Assume the assumptions of Setting \ref{sett} and let $\Phi:\rd(B)\rightarrow \rd(A)$ be an equivalence of derived categories of abelian surfaces.
Moreover assume that the elliptic curves $E$ and $F$ are both supersingular. Then the hypotheses of Proposition \ref{cond&stat} hold.
\end{prop}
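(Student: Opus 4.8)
The plan is to verify in turn the five hypotheses $(E_1)$--$(E_3)$ and $(A_1)$--$(A_2)$ of Proposition~\ref{cond&stat}. For $(E_1)$--$(E_3)$ I would simply invoke Proposition~\ref{chiandr2} (the case $\deg\nu=1$ being similar and easier): this produces an equivalence $\Xi\colon\rd(B)\rightarrow\rd(C)$ with $C\in\{A,\widehat A\}$ whose vector $\Xi^{\rm CH}(0,0,1)=(r,l,\chi)$ has $r>0$ and coprime to $p$, $l$ ample, and $\chi$ coprime to $r$, and --- crucially for what follows --- $l=\lambda^{*}l'$, where $\lambda=\nu$ if $C=A$ and $\lambda=\widehat\mu$ otherwise, and $l'=l(\varphi,d_{1},d_{2})$ is the (ample) class of the line bundle $L':=L(\varphi,L_{E},L_{F})$ on $E\times F$ for some \'etale isogeny $\varphi\colon F\rightarrow E$ and line bundles $L_{E},L_{F}$ of degrees $d_{1},d_{2}$.

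The heart of the argument is then the construction of lifts $\sE,\sF$ over a finite ramified extension $W$ of the Witt ring that simultaneously witness $(A_{1})$ and $(A_{2})$. Since $F$ is supersingular, I would first use Deuring's theorem (\emph{cf.}~\cite{Oo}) to produce a projective lift $\sF\rightarrow W$ of $F$ which --- in the degree-three case, where $F$ carries an automorphism $\sigma$ of groups of order three and $p>3$ so $p\nmid 6$ --- can be chosen so that $\sigma$ lifts to an automorphism $\sigma_{W}$ of $\sF$. Because $\varphi$ is separable and $F$ is supersingular, $K:=\ker\varphi$ is a finite \'etale subgroup scheme of $F[\deg\varphi]$ of order $\deg\varphi$ prime to $p$ (a separable isogeny of degree divisible by $p$ would force a $k$-point of order $p$ on the supersingular curve $F$); as $k$ is algebraically closed and $W$ is complete local, $\sF[\deg\varphi]$ is a constant \'etale group scheme over $W$, and $K$ extends uniquely to a finite \'etale subgroup scheme $K_{W}\subset\sF[\deg\varphi]$. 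I would then put $\sE:=\sF/K_{W}$, an elliptic curve over $W$ with closed fibre $F/K=E$, so that the quotient morphism $\varphi_{W}\colon\sF\rightarrow\sE$ is an isogeny lifting $\varphi$.

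For $(A_{1})$ I would lift $L_{E},L_{F}$ to line bundles $\mathcal L_{\sE}$ on $\sE$ and $\mathcal L_{\sF}$ on $\sF$ (line bundles on curves lift), form the Mumford bundle $\sM_{\sE}$ on $\sE\times_{W}\sE$ exactly as in \S\ref{mumlin} (it restricts to $\sM_{E}$), and set
\[
\mathcal L':=(1_{\sE}\times\varphi_{W})^{*}\sM_{\sE}\otimes{\rm pr}_{\sE}^{*}\mathcal L_{\sE}\otimes{\rm pr}_{\sF}^{*}\mathcal L_{\sF},
\]
a lift of $L'$ to $\sE\times_{W}\sF$. Taking $\lambda_{W}\colon\sC\rightarrow\sE\times_{W}\sF$ to be the lift of $\lambda$ furnished by Proposition~\ref{liftetale} for these $\sE,\sF$, the line bundle $\lambda_{W}^{*}\mathcal L'$ on $\sC$ restricts to $L:=\lambda^{*}L'$, which is ample with class $l$; this gives $(A_{1})$. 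For $(A_{2})$ I would restrict $\sigma_{W}$ to the geometric generic fibre $\sF_{\eta}$: an order-three automorphism of an elliptic curve in characteristic zero forces complex multiplication, so ${\rm End}(\sF_{\eta})\otimes\mathbf{Q}$ is an imaginary quadratic field. As $\sE_{\eta}$ is isogenous to $\sF_{\eta}$ via $\varphi_{W}$, the group ${\rm Hom}(\sE_{\eta},\sF_{\eta})\otimes\mathbf{Q}$ is then either $0$ or that same field, whence ${\rm rk \, NS}(\sE_{\eta}\times\sF_{\eta})=2+{\rm rk \, Hom}(\sE_{\eta},\sF_{\eta})\in\{2,4\}$; and since $\lambda_{\eta}\colon\sC_{\eta}\rightarrow\sE_{\eta}\times\sF_{\eta}$ is an isogeny, pull-back identifies N\'{e}ron--Severi groups up to torsion (as in Proposition~\ref{Neron}), so ${\rm rk \, CH}^{1}(\sC_{\eta})={\rm rk \, NS}(\sE_{\eta}\times\sF_{\eta})\in\{2,4\}$. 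All hypotheses of Proposition~\ref{cond&stat} would then hold, which together with the ordinary cases completes the proof of Theorem~\ref{MTg}, hence of Theorem~\ref{main1}.

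I expect the main obstacle to be the \emph{joint} construction of $\sE$ and $\sF$. In the degree-three supersingular case the lift of $F$ is forced to carry the order-three automorphism, while $(A_{1})$ independently forces a compatible lift of the isogeny $\varphi$ (and hence of the polarization $L$); these two demands pull the choice of $\sF$ in opposite directions unless one lifts $\sF$ first, via Deuring, and then \emph{derives} $\sE$ as the quotient $\sF/K_{W}$ --- the reverse of the procedure used in the mixed ordinary/supersingular case, where $E$ was lifted first and the \'etale cover pulled back. Verifying that this quotient is again an elliptic curve lifting $E$, and that the Mumford-bundle construction genuinely descends over $W$, are the routine but load-bearing checks.
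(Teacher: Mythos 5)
Your proposal is correct in substance and follows the paper's overall strategy (Proposition \ref{chiandr2} for $(E_1)$--$(E_3)$, a Deuring lift of the supersingular curve $F$ together with its order-three automorphism, a lifted Mumford-bundle construction for $(A_1)$, and the CM argument in characteristic zero for $(A_2)$), but it diverges from the paper at the one genuinely delicate step, namely how to lift the \'etale isogeny $\varphi\colon F\rightarrow E$ once the lift of $F$ has been forced by Deuring's theorem. The paper does \emph{not} lift $\varphi$ itself: it applies Theorem \ref{etalecover} to the dual \'etale cover $\widehat{\varphi}\colon E\rightarrow F$ of the Deuring lift $\sF_0$, obtaining $\widetilde{\varphi}_0\colon\sE_0\rightarrow\sF_0$, and then transfers the line-bundle lifting through the symmetry isomorphism $(1_E\times \varphi)^*\sM_E\simeq (\widehat{\varphi}\times 1_F)^*\sM_F$, so that $L(\varphi,M_1,M_2)$ is rewritten as a pull-back along $\widehat{\varphi}\times 1_F$ and lifted via $\sM_{F}$ rather than $\sM_E$. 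You instead lift $\varphi$ directly by noting that $\ker\varphi$ is \'etale of order prime to $p$ (forced by supersingularity), extending it to a constant subgroup scheme $K_W\subset\sF[\deg\varphi]$ over the henselian base, and setting $\sE:=\sF/K_W$ with quotient map $\varphi_W$. This is a valid alternative: it avoids the Mumford-bundle symmetry trick entirely and treats the degree-two and degree-three supersingular cases uniformly, at the cost of invoking the existence of quotients of an elliptic curve over $W$ by a finite flat (here constant) subgroup scheme acting freely --- standard, but a tool the paper deliberately avoids, since its only lifting inputs are Theorem \ref{etalecover} and Deuring. Your $(A_2)$ argument also matches the paper's in substance; note only that in your construction $\sE_\eta$ and $\sF_\eta$ are isogenous by design, so ${\rm Hom}(\sE_\eta,\sF_\eta)$ is automatically of rank two and the N\'eron--Severi rank of the product is $4$, which still lands in $\{2,4\}$ as required. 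Finally, your remark that the $\deg\nu=1$ case is ``similar and easier'' glosses over the fact that Proposition \ref{chiandr2} is stated only for $\deg\nu\geq 2$ (the paper itself handles the isomorphism case by reducing to a box product $M_1\boxtimes M_2$), but this does not affect the core of your argument.
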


\begin{proof}
 By Propositions \ref{RelPrime3} and \ref{chiandr2} we can find an equivalence $\Xi:\rd(B)\rightarrow \rd(C)$ 
 with $C\in \{A,\widehat{A}\}$ such that the vector 
 $$v \; := \; \big(r, \, \lambda^*l(\varphi, m_1,m_2), \, \chi \big) \; = \; \Xi^{\rm CH}(0, \,0, \,1)$$ 
 satisfies the assumptions $(E_1)$, $(E_2)$, and $(E_3)$ of Proposition \ref{cond&stat}.
 The class $l(\varphi,m_1,m_2)$ is the class of an ample line bundle $L(\varphi,M_1,M_2)$ on $E\times F$ where $M_1$ and $M_2$ are line bundles 
 on $E$ and $F$ respectively,
 and $\varphi: F\rightarrow E$ is either the constant morphism $O_E$ in case $\nu$ is an isomorphism, or an \'{e}tale isogeny otherwise. 
Let $\sE\rightarrow W$ be a lift of $E$ to the ring of Witt vectors. 
 If $\varphi=O_E$ is the constant morphism, then on any lift $\sF\rightarrow W$ of $F$ we can construct a lift of $L=M_1\boxtimes M_2$ by 
 simply lifting $M_1$ and $M_2$ to $W$ and taking their exterior product.
 Assume now that $\varphi$ is a separable isogeny. Then Theorem \ref{etalecover} determines a projective lift $\sF\rightarrow W$ of $\sF$ and an \'{e}tale cover
 $\widetilde{\varphi}:\sF\rightarrow \sE$ that specializes to $\varphi$.
 It is not difficult to check that $L(\varphi,M_1,M_2)$ lifts to a line bundle $\mathcal{L}$ on $\sE\times_W\sF$. This goes as follows. 
 As line bundles on curves 
 lift, we can find a lift of $\sO_E(O_E)$ which in turn allows us to construct a lift $\sM_{E,W}$ to $W$ of the 
 Mumford line bundle $\sM_E$ (\emph{cf}. \S \ref{mumlin}). By denoting by $M_{1,W}$ and $M_{2,W}$ the liftings of $M_1$ and $M_2$ on $\sE$ and $\sF$ 
 respectively, we see that the line
 bundle $\mathcal{L}:=(1_{\sE} \times_W \widetilde{\varphi})^*\sM_{E,W} \otimes {\rm pr}_{\sE}^*M_{1,W}\otimes {\rm pr}_{\sF}^*M_{2,W}$ is a lift 
 of $L(\varphi,M_1,M_2)$, where ${\rm pr}_{\sE}$ and ${\rm pr}_{\sF}$ are the obvious projections. In conclusion the line bundle 
 $\lambda_W^*\sL$ lifts $\lambda^*L(\varphi,M_1,M_2)$ where $\lambda_W:\mathcal{C}\rightarrow \sE \times_W\sF$ is the 
 lift of Proposition \ref{cond&stat}. This proves the condition $(A_1)$ in the case ${\rm deg}\,=1,2$. 

 Now suppose ${\rm deg}\,\nu=3$ and that $F$ has an automorphism of groups $\rho:F\rightarrow F$ of order three.
  By a result of Deuring (\emph{cf}. \cite[p. 189 or p. 172]{Oo})
  it is possible to lift a supersingular elliptic curve together with an endomorphism over a finite ramified extension $W$ of the ring of Witt 
  vectors.
 Thus we denote by $\sF_0\rightarrow W$  
 and $\rho_W:\sF_0\rightarrow \sF_0$ the lifts of $F$ and $\rho$ to $W$ respectively. 
 The \'{e}tale cover $\widehat{\varphi}:E\rightarrow F$ determines a lift $\sE_0$ of $E$ together with a lift 
 $\widetilde{\varphi}_0$ of $\widehat{\varphi}$ over $W$.  Denote now by 
 $\lambda_{W,0}:\mathcal{C}_0\rightarrow \sE_0\times_W\sF_0$ the lift of $\lambda$ as defined in Proposition \ref{cond&stat}. 
After noting the isomorphism $(1_E\times \varphi)^*\sM_E\simeq (\widehat{\varphi}\times 1_F)^*\sM_F$, and hence that 
$L(\varphi,M_1,M_2)\simeq (\widehat{\varphi}\times 1_F)^*\sM_F\otimes {\rm pr}_{E}^*M_1\otimes {\rm pr}_F^*M_2$, 
the previous argument shows that the line bundle $\lambda^*L(\varphi,M_1,M_2)$ lifts to $\mathcal{C}_0$.
 Moreover the restriction of $\rho_W$ to the geometric generic fiber $(\sF_0)_{\eta}$ is an automorphism that is not proportional to the identity.
Hence 
  ${\rm rk \,CH}^1\big((\sE_0)_{\eta}\times (\sF_0)_{\eta} \big)=\{2,4\}$ according to whether or not there is an isogeny between 
  $(\sE_0)_{\eta}$ and $(\sF_0)_{\eta}$. As 
 the rank of the N\'{e}ron--Severi group does not change under separable isogenies, 
 we have that ${\rm rk \, CH}^1((\sC_0)_{\eta})\in \{2,4\}$ where $(\sC_0)_{\eta}$ is the geometric generic fiber of $\sC_0$. In particular
 the assumption $(A_2)$ holds.
 \end{proof}
 
\section{Canonical Covers of Enriques Surfaces} \label{FMenr}
 An \emph{Enriques surface} $S$ over an algebraically closed field $k$ of characteristic $p\neq 2$ is a smooth projective minimal 
surface with canonical 
bundle $\omega_S$ of order two, $\chi(\sO_S)=1$, and $b_2(S)=10$. The canonical cover $\pi:X\rightarrow S$ induced by $\omega_S$ is 
a double \'{e}tale cover with $X$ a $K3$ surface.
Let $\Phi:\rd(Y)\rightarrow \rd(X)$ be a derived equivalence so that $Y$ is itself a $K3$ surface by \cite[Proposition 3.9]{LO}.
We aim to prove Theorem \ref{main3}, namely that $X\simeq Y$.

In \cite{LO} the authors prove that Shioda-supersingular $K3$ surfaces do not admit any non-trivial Fourier--Mukai partners, thus we can assume that $X$ has Picard rank less than $22$. Since in odd characteristic Shioda-supersingularity is equivalent to Artin-supersingularity (\emph{cf.} for instance \cite{Pe2015}), we can assume that the formal Brauer group of $X$ is of finite height.  In particular there
exists a lift $\mathcal{X}\rightarrow W$ of $X$ over the ring of Witt vectors 
such that ${\rm NS}(X)\simeq {\rm NS}(\mathcal{X}_\eta)$, where as usual $\mathcal{X}_\eta$ is the geometric generic fiber (\cite[p. 505]{NygO}).  By the work of Jang (\cite[Theorem 2.5]{Ja2013}) there is a primitive embedding 
of the Enriques lattice $\Gamma(2):=U(2)\oplus E_8(-2)$ of $S$ into ${\rm NS}(X)$ such that the orthogonal complement 
of the embedding does not contain any vector of self intersection $-2$. Using the isomorphism ${\rm NS}(X)\simeq {\rm NS}(\mathcal{X}_\eta)$, together with the characterization of Enriques-$K3$ surfaces in terms of its periods (\cite[Theorem 1.14]{Na1985}), we deduce that $\mathcal{X}_\eta$ is a $K3$ surface arising as the canonical cover of an Enriques surface. In particular $\mathcal{X}_\eta$ does not have any non-trivial Fourier--Mukai partner thanks to \cite[Theorem 1.1]{So}. On the other hand,  \cite[Proposition 8.2]{LO} implies that $Y$ is isomorphic to a moduli space $\mathcal{M}_h(v)$ of $h$-Gieseker-stable sheaves  for some Mukai vector $v=(r, l, \chi)$. Since all the line bundles on $X$ deform to line bundles over $\mathcal{X}\rightarrow W$, we can consider the relative moduli space $\sM_{\mathcal{X}/W}(r, \widetilde{l}, \chi)\rightarrow W$ where $\widetilde{l}$ is a lift of $l$. The geometric generic fiber of $\mathcal{M}_{\mathcal{X}/W}(r, \tilde{l}, \chi)\rightarrow W$ is by construction a Fourier--Mukai partner of $\mathcal{X}_\eta$ and therefore, by the previous considerations, it is isomorphic to $\mathcal{X}_\eta$.  We conclude by using \cite[Lemma 6.5]{LO} 
saying that the isomorphism between the geometric generic fibers of relative $K3$ surfaces induces an isomorphism between the closed fibers.

\addcontentsline{toc}{chapter}{References}

\end{document}